\newtheorem{theorem}{Theorem}[section] 
\newtheorem{lemma}[theorem]{Lemma}     
\newtheorem{corollary}[theorem]{Corollary}
\newtheorem{proposition}[theorem]{Proposition}
\newcommand{\abs}[1]{\vert#1\vert}
\newcommand{\Real}{\mathbb{R}}
\newcommand{\Comp}{\mathbb{C}}
\newcommand{\To}{\longrightarrow}
\newcommand{\Union}[2]{\bigcup_{#1}#2_{#1}}
\newcommand{\DisjUnion}[2]{\bigsqcup_{#1}#2_{#1}}
\newcommand{\DistSet}[3]{#1+\rho^{-1}(#2)t(#3)}
\newcommand{\ROI}[1]{O_{#1}}
\newcommand{\roi}[1]{\mathcal{O}_{#1}}
\newcommand{\less}{\setminus}
\newcommand{\Res}[1]{\overline{#1}}
\newcommand{\res}[1]{\overline{#1}}
\newcommand{\Coset}[2]{#1+t(#2)\roi{F}}
\newcommand{\g}{\gamma}
\newcommand{\calL}{\mathcal{L}}
\newcommand{\calS}{\mathcal{S}}
\newcommand{\Char}[1]{\mbox{char}_{#1}}
\newcommand{\al}{\alpha}
\newcommand{\CG}{\Comp(\Gamma)}
\newcommand{\w}{\omega}
\newcommand{\mult}[1]{#1^{\times}}
\newcommand{\dmult}[1]{d\overset{\mbox{\tiny$\times$}}{#1}}
\newcommand{\nab}{\nabla}
\newcommand{\symb}[2]{\{#1,#2\}}
\newcommand{\comment}[1]{}
\newcommand{\realpart}[1]{\mbox{Re}(#1)}
\newcommand{\frakf}{\mathfrak{f}}
\newcommand{\frakt}{\mathfrak{t}}
\newcommand{\ep}{\varepsilon}
\title[Integration on Valuation Fields over Local Fields]
 {Integration on Valuation Fields over Local Fields} 
\author{Matthew T. Morrow}
\begin{document}
\maketitle

\begin{abstract}
We present elements of a theory of translation-invariant integration, measure, and harmonic analysis on a valuation field with local field as residue field. This extends the work of Fesenko. Applications to zeta integrals for two-dimensional local fields are then considered.
\end{abstract}


\subsection*{Introduction}

Integration over a valuation field is an important problem in certain areas of mathematics and mathematical physics. In the case of a higher dimensional local field it was first considered by Fesenko \cite{Fesenko-analysis-on-arithmetic-schemes} \cite{Fesenko-analysis-on-loop-spaces}, and later by Kim and Lee \cite{Kim-Lee1}. A model theoretic approach to the theory of translation invariant integration follows from the works of Hrushovski and Kazhdan \cite{Hrushovski-Kazhdan1} \cite{Hrushovski-Kazhdan2} in the case of characteristic zero (and sufficiently large positive characteristic) residue field.

In the theory of two-dimensional arithmetic schemes a theory of translation invariant integration, harmonic analysis, and zeta integrals over two dimensional local fields is required in order to generalise the techniques of Tate \cite{Tate's-thesis} and Iwasawa \cite{Iwasawa} in the one-dimensional case. Such a theory will have applications to the main open problems in the arithmetic of elliptic curves over global fields.

To study the representation theory of algebraic groups over two-dimensional local fields (see \cite{Kim-Lee1} for measure theory, \cite{Kim-Lee2} for further references) and Langlands philosophy, such a theory is invaluable.

In mathematical physics, the Feynman integral is not understood rigorously (see \cite{Johnson-Lapidus} for discussion of the problems). The valuation fields $\Comp(t)$ and $\Real(t)$ may be identified with subspaces of the space of continuous paths, and so measure theory on them may provide insight into Feynman measure.

We now outline the content of the paper and important ideas.

Let $F$ be a valuation field with valuation group $\Gamma$ and integers $\roi{F}$, whose residue field $\res{F}$ is a non-discrete, locally compact field (ie. a local field: $\Real$, $\Comp$, or non-archimedean). Given a Haar integrable function $f:\res{F}\to\Comp$, we consider the lift, denoted $f^{0,0}$, of $f$ to $\roi{F}$ by the residue map, as well as the functions of $F$ obtained by translating and scaling \[x\mapsto f^{0,0}(x+a),\qquad x\mapsto f^{0,0}(\al x)\] for $a\in F$, $\al\in\mult{F}$. We work with the space spanned by these function as $f$ varies. A simple linear independence result (proposition \ref{prop_linear_independence_of_lifted_functions}) is key to proving that an integral taking values in $\Comp\Gamma$ (the complex group algebra of $\Gamma$), under which $f^{0,0}$ has value $\int_{\res{F}}f(u)\,du$, is well defined.

The integration yields a translation invariant measure. For example, in the case of $\Comp(t)$, the set $St^n+t^{n+1}\Comp[t]$ is given measure $\mu(S)X^n$ in $\Real(X)$, where $S$ is a Lebesgue measurable subset of $\Comp$ of positive finite measure $\mu(S)$.

The first elements of a theory of harmonic analysis are then presented for fields which are self-dual in a certain sense. Not all functions occuring belong to the space of integrable functions already considered, and so the integral is extended. A Fourier transform is defined and a double transform formula proved.

There then appears a short section on integrating over the multiplicative group of $F$. Here we generalise the relationship $\dmult{x}=\abs{x}^{-1}d^{+}x$ between the multiplicative and additive Haar measures of a local field.

If $F$ is a higher dimensional local field then the main results of the aforementioned sections reduce to results of Fesenko in \cite{Fesenko-analysis-on-arithmetic-schemes} (henceforth referred to as [AoAS]) and  \cite{Fesenko-analysis-on-loop-spaces}. However, the results here are much more general (and abstract); in particular, if $\res{F}$ is archimedean then we provide proofs of claims in \cite{Fesenko-analysis-on-arithmetic-schemes} regarding higher dimensional archimedean local fields, and whereas those papers work with complete fields, we require no topological conditions. The more abstract approach to the integral developed in this paper appears to be powerful; the author has also used it to prove Fubini's theorem for certain repeated integrals over $F\times F$ and deduce the existence of a translation invariant integral on $\mbox{GL}_n(F)$.

In the final sections of the paper, we consider various zeta integrals. Firstly, parts of the theory of local zeta integrals over $\res{F}$ are lifted to $F$. In doing so we are lead to consider certain divergent integrals related to quantum field theory and we suggest a method of obtaining epsilon constants from such integrals.

We then consider zeta integrals over the local field $\res{F}$; a modified Fourier transform $f\mapsto f^*$ is defined (following Weil \cite{Weil-fonction-zeta} and [AoAS] in the non-archimedean case) and we prove, following the approaches of Tate and Weil, that it leads to a local functional equation, with appropriate epsilon factor, with respect to $s$ goes to $2-s$: \[Z(g^*,\w^{-1},2-s)=\ep_*(\w,s)Z(g,\w,s).\] After explicitly calculating some $^*$-transforms we use this functional equation to calculate the $^*$-epsilon factors for all quasi-characters $\w$. These results on zeta integrals and epsilon factors are then used to prove that $^*$ is an endomorphism of $\calS(K)$, which, though important, appears not to have been considered before. When $\res{F}$ is archimedean we define a new $^*$-transform and consider some examples.

Zeta integrals over the two-dimensional local field are then considered following [AoAS]. Lacking a measure theory on the topological $K$-group $K_2^{\mbox{\tiny top}}(F)$ (the appropriate object for class field theory of $F$; see \cite{Fesenko-multidimensional-local-class-field-theory}), a zeta integral over (a subgroup of) $\mult{F}\times\mult{F}$ is considered: \[\zeta(f,\chi,s)=\int^{\mult{F}\times\mult{F}}f(x,y)\,\chi\circ\mathfrak{t}(x,y)\abs{\mathfrak{t}(x,y)}^s\,\Char{T}(x,y)\,\dmult{x}\dmult{y}.\] Analytic continuation and functional equation are established for certain quasi-characters; indeed in these cases the functional equation, and explicit L-functions and epsilon factors, follow from properties of the $^*$-transform on $\res{F}$. Our results are compared with [AoAS].

The advantages of our new approach to the integration theory are apparent in these chapters on local zeta integrals. Our approach is to lift known results up from the local field $\res{F}$, rather than try to generalise the proof for a local field to the two-dimensional field. For example, we therefore immediately know that many of our local zeta functions have analytic continuation. Apparently complicated integrals on $F$ reduce to familiar integrals over $\res{F}$ where manipulations are easier; for example, we may work at the level of $\res{F}$ even though we are calculating epsilon factors for \emph{two-dimensional} zeta integrals.

The appendices are used to discuss theory which would otherwise interrupt the paper. Firstly, the set manipulations in [AoAS] (use to prove that the measure is well-defined) are reproved here more abstractly. Secondly we discuss what we mean by a holomorphic function taking values in a complex vector space; this allow us to discuss analytic continuation of our zeta functions. Finally the extension of the integration theory to $F\times F$ is considered; no proofs are given and similar results may be found in \cite{Morrow_2}.

\subsection*{Notation}
Let $\Gamma$ be a totally ordered group and $F$ a field with a valuation $\nu:F^{\times}\to\Gamma$ with residue field $\Res{F}$, ring of integers $\roi{F}$ and residue map $\rho:\roi{F}\to\Res{F}$ (also denoted by an overline). Suppose further that the valuation is split; that is, there exists a homomorphism $t:\Gamma\to F^{\times}$ such that $\nu\circ t=\mbox{id}_\Gamma$. The splitting of the valuation induces a homomorphism $\eta:F^{\times}\to\Res{F}^{\times}$ by $x\mapsto \Res{xt(-\nu(x))}$. Assume also that $\Gamma$ contains a minimal positive element, denoted $1$.

Sets of the form $\Coset a{\g}$ are called \emph{translated fractional ideals}; $\g$ is referred to as the \emph{height} of the set.

$\CG$ denotes the field of fractions of the complex group algebra $\Comp\Gamma$ of $\Gamma$; the basis element of the group algebra corresponding to $\g\in\Gamma$ shall be written as $X^{\g}$ rather than as $\g$. With this notation, $X^{\g}X^{\delta}=X^{\g+\delta}$. Note that if $\Gamma$ is a free abelian group of finite rank $n$, then $\CG$ is isomorphic to the rational function field $\Comp(X_1,\dots,X_n)$.

We fix a choice of Haar measure on $\res{F}$; occasionally, for convenience, we shall assume that $\roi{\res{F}}$ has measure one. The measure on $\mult{\res{F}}$ is chosen to satisfy $\dmult{x}=\abs{x}^{-1}d^+x$.

\begin{remark*}
The assumptions above hold for a higher dimensional local field. For basic definitions and properties of such fields, see \cite{IHLF}.

Indeed, suppose that $F=F_n$ is a higher dimensional local field of dimension $n\ge2$: we allow the case in which $F_1$ is an archimedean local field. If $F_1$ is non-archimedean, instead of the usual rank $n$ valuation $\mathbf{v}:F^{\times}\to\mathbb{Z}^n$, let $\nu$ be the $n-1$ components of $\mathbf{v}$ corresponding to the fields $F_n,\dots,F_2$; note that $\mathbf{v}=(\nu_{\Res{F}}\circ\eta,\nu)$. If $F_1$ is archimedean, then $F$ may be similarly viewed as an valuation field with value group $\mathbb{Z}^{n-1}$ and residue field $F_1$.

The residue field of $F$ with respect to $\nu$ is the local field $\Res{F}=F_1$. If $F$ is non-archimedean, then the ring of integers $\ROI{F}$ of $F$ with respect to the rank $n$ valuation is equal to $\rho^{-1}(\roi{\Res{F}})$, while the groups of units $\mult{\ROI{F}}$ with respect to the rank $n$ valuation is equal to $\rho^{-1}(\roi{\Res{F}}^{\times})$.
\end{remark*}

The primary reference for this work is \cite{Fesenko-analysis-on-arithmetic-schemes}, to which we will refer as [AoAS].

\subsection*{Acknowledgements}
I am grateful for useful discussions with I. Fesenko.


\section{Integration on $F$}
In this section we explain a basic theory of integration on $F$. The following definition is fundamental:

\begin{definition}
Let $f$ be a function on $\Res{F}$ taking values in an abelian group $A$; let $a\in F$, $\g\in\Gamma$. The $\emph{lift}$ of $f$ at $a,\g$ is the $A$-valued function on $F$ defined by
\[f^{a,\g}(x)=
    \begin{cases}
    f(\Res{(x-a)t(-\g)}) & \quad x\in a+t(\g)\roi{F} \\
    0 & \quad\mbox{otherwise}
    \end{cases}
\]
\end{definition}

In other words,
\[f^{0,0}(x)=
    \begin{cases}
    f(\Res{x}) & \quad x\in\roi{F} \\
    0 & \quad\mbox{otherwise}
\end{cases}\]
and $f^{a,\g}(a+t(\g)x)=f^{0,0}(x)$ for all $x$.

It is useful to determine how lifted functions behave on translated fractional ideals:

\begin{lemma}\label{lemma_lifted_functions_1}
Let $f^{a,\g}$ be a lifted function as in the definition; let $b\in F$, $\delta\in\Gamma$. Then for all $x$ in $\roi{F}$,
\begin{description}
\item[case $\delta>\g$]
\[f^{a,\g}(b+t(\delta)x)=
    \begin{cases}
    f(\Res{(b-a)t(-\g)})&\quad b\in\Coset a{\g}\\
    0&\quad\mbox{otherwise}
    \end{cases}
\]
\item[case $\delta=\g$]
\[f^{a,\g}(b+t(\delta)x)=
    \begin{cases}
    f(\Res{(b-a)t(-\g)}+\Res{x})&\quad b\in\Coset a{\g}\\
    0&\quad\mbox{otherwise}
    \end{cases}
\]
\item[case $\delta<\g$]
\[f^{a,\g}(b+t(\delta)x)=
    \begin{cases}
    f(\Res{(b+t(\delta)x-a)t(-\g)})
        &\quad x\in(a-b)t(\delta)^{-1}+t(\g-\delta)\roi{F}\\
    0&\quad\mbox{otherwise}
    \end{cases}
\]
In particular in this final case, if $x,y\in\roi{F}$ are such that
$f^{a,\g}(b+t(\delta)x)$ and $f^{a,\g}(b+t(\delta)y)$ are
non-zero, then $\Res{x}=\Res{y}$.
\end{description}
\end{lemma}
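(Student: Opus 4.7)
The proof is a direct unpacking of the definition of $f^{a,\g}$, split into the three cases according to how $\delta$ compares with $\g$. In each case, I would first determine exactly when the argument $b+t(\delta)x$ lies in the translated fractional ideal $a+t(\g)\roi{F}$ (so that the function is non-zero), and then compute the relevant residue.

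First I would observe that $f^{a,\g}(b+t(\delta)x)\neq 0$ iff $b+t(\delta)x-a\in t(\g)\roi{F}$, in which case the value is $f(\Res{(b+t(\delta)x-a)t(-\g)})$. For the case $\delta>\g$, the term $t(\delta)x$ lies in $t(\delta)\roi{F}\subseteq t(\g)\roi{F}$ (as $\nu(t(\delta)x)\geq\delta>\g$), so the membership condition reduces to $b\in\Coset a{\g}$. Then $(b+t(\delta)x-a)t(-\g)=(b-a)t(-\g)+t(\delta-\g)x$, and the second summand lies in the maximal ideal (since $\delta-\g>0$), so its residue vanishes and the value is $f(\Res{(b-a)t(-\g)})$. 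The case $\delta=\g$ is identical in spirit: the condition again reduces to $b\in\Coset a{\g}$, but now the extra summand $x$ survives under the residue map and contributes $\Res{x}$ to the argument of $f$.

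For the case $\delta<\g$, I would multiply through by $t(-\delta)$ to rewrite the condition $b+t(\delta)x-a\in t(\g)\roi{F}$ as $x\in(a-b)t(\delta)^{-1}+t(\g-\delta)\roi{F}$, which is exactly what the statement requires (note that $\g-\delta>0$ makes $t(\g-\delta)\roi{F}$ an ideal in $\roi{F}$). The value of $f$ is then just read off from the definition. For the final assertion, if both $x$ and $y$ yield non-zero values, then both lie in the same coset $(a-b)t(\delta)^{-1}+t(\g-\delta)\roi{F}$, so $x-y\in t(\g-\delta)\roi{F}$; since $\g-\delta\geq 1$ (the minimal positive element of $\Gamma$), this forces $x-y$ into the maximal ideal $t(1)\roi{F}$, giving $\Res{x}=\Res{y}$.

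The argument involves no real obstacle; the only point demanding care is bookkeeping with $t$ and the valuation when multiplying cosets through by $t(-\g)$ or $t(-\delta)$, and in particular using that $t$ is a homomorphism so that $t(\delta)t(-\g)=t(\delta-\g)$ sits in $\roi{F}$ exactly when $\delta\geq\g$. Apart from that, the lemma is just a careful rewriting of the definition in each of the three regimes.
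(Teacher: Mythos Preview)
Your proposal is correct and is exactly the approach the paper takes: the paper simply states that the lemma follows from the definition of a lifted function by direct verification, and you have spelled out that verification carefully and accurately in each of the three cases.
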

\begin{proof}
This follows from the definition of a lifted function by direct
verification.
\end{proof}

Let $\calL$ be the space of complex-valued Haar integrable functions on $\res{F}$.

\begin{remark}\label{remarks_lifted_functions}
\mbox{}
\begin{enumerate}
\item For $a\in F,\g\in\Gamma$, let $\calL^{a,\g}$ denote the space of complex valued functions on $F$ of the form $f^{a,\g}$, for $f\in\calL$. Suppose $\Coset{a_1}{\g_1}=\Coset{a_2}{\g_2}$. Then $\g_1=\g_2$ and \[f^{a_1,\g_1}(x)=f^{a_2,\g_2}(x+a_2-a_1)=g^{a_2,\g_2}(x)\] where $g\in\calL$ is the function $g(y)=f(y+\Res{(a_2-a_1)t(-\g_2)})$. Hence $\calL^{a_1,\g_1}=\calL^{a_2,\g_2}$.

\item Given a lifted function $f^{a,\g}$ and $\tau\in F$, then the translated function $x\mapsto f^{a,\g}(x+\tau)$ is the lift of $f$ at $a-\tau,\g$
\end{enumerate}
\end{remark}

\begin{definition}
For $J=\Coset a{\g}$ a translated fractional ideal of $F$, define $\calL(J)$ to be the space of complex-valued functions of $F$ of the form $f^{a,\g}$, for $f\in\calL$.

Introduce an integral on $\calL(J)$ by
\begin{align*}
	\int^J:\calL&\to\Comp\\
	f^{a,\g}&\mapsto \int_{\res{F}}f(u)\,du.
	\end{align*}
\end{definition}

By remarks \ref{remarks_lifted_functions} and translation invariance of the Haar integral on $\calL$, the integral is well-defined (ie. independent of $a,\gamma$).

\begin{proposition} \label{prop_linear_independence_of_lifted_functions}
The spaces $\calL(J)$, as $J$ varies over all translated fractional ideals, are linearly independent.
\end{proposition}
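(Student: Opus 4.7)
I would induct on the number $n$ of terms in a putative vanishing combination $\sum_{i=1}^n f_i^{a_i,\gamma_i} = 0$ with the $J_i = \Coset{a_i}{\gamma_i}$ pairwise distinct, the goal being to show each $f_i = 0$ in $\calL$. The base case $n=1$ is immediate since $f^{a,\gamma}(a+t(\gamma)u) = f(\res{u})$ for $u \in \roi{F}$, so $f_1^{a_1,\gamma_1} = 0$ forces $f_1 = 0$.

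For the inductive step, the key structural observation is that any two translated fractional ideals are either disjoint or nested, since the subgroups $t(\gamma)\roi{F}$ are totally ordered by inclusion (larger $\gamma$ corresponds to a smaller subgroup). After reindexing I would choose $J_1$ so that $\gamma_1$ is maximal among the $\gamma_i$. Then each $J_i$ with $i \neq 1$ is either disjoint from $J_1$ (which always happens when $\gamma_i = \gamma_1$, as distinct cosets of the same subgroup are disjoint) or else satisfies $J_i \supsetneq J_1$ with $\gamma_i < \gamma_1$; no $J_i$ can be strictly contained in $J_1$.

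The crucial step is to restrict the relation to $J_1$, i.e.\ substitute $x = a_1 + t(\gamma_1)u$ for $u \in \roi{F}$. The $i=1$ term contributes $f_1(\res{u})$. Each term with $J_i \supsetneq J_1$ falls into the case $\delta > \gamma$ of Lemma \ref{lemma_lifted_functions_1} and contributes a constant $c_i = f_i(\res{(a_1-a_i)t(-\gamma_i)})$ independent of $u$; the remaining terms restrict to zero. Setting the total equal to zero shows
\[
f_1(\res{u}) + \sum_{J_i \supsetneq J_1} c_i = 0 \qquad (u \in \roi{F}),
\]
so $f_1$ is a constant function on $\res{F}$. Since $\res{F}$ is a non-discrete locally compact field its Haar measure is infinite, so integrability forces this constant to be zero, whence $f_1 = 0$ in $\calL$. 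The inductive hypothesis applied to the relation with the now-null first term removed then completes the proof.

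The main subtlety is really just organising the restriction argument so that exactly one term retains a genuine $u$-dependence; this is precisely what the maximality of $\gamma_1$ buys us, since any $J_i$ strictly contained in $J_1$ would fall under the $\delta = \gamma$ or $\delta < \gamma$ cases of Lemma \ref{lemma_lifted_functions_1} and contribute a non-constant term that would spoil the collapse to a single integrable constant function.
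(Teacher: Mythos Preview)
Your proof is correct and follows the same inductive strategy as the paper: restrict the relation to a translated fractional ideal of extremal height, use Lemma~\ref{lemma_lifted_functions_1} to show that all but one term becomes constant on that ideal, deduce that the remaining function is a constant in $\calL$ and hence zero, then invoke the inductive hypothesis.

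The one difference is that you select $\gamma_1$ \emph{maximal}, whereas the paper orders $\gamma_1\le\cdots\le\gamma_n$ and restricts to $J_1$ of \emph{minimal} height. Your choice is the cleaner one: with $\gamma_1$ maximal, the other terms genuinely fall under the case $\delta>\gamma$ of Lemma~\ref{lemma_lifted_functions_1} and are literally constant on $J_1$, exactly as you say. In the paper's ordering the substitution actually lands in the case $\delta<\gamma$ (the citation of ``the first case'' there appears to be a slip), where the surviving terms are not constant but merely supported on a single residue class, and a little extra care is needed. Your argument also avoids the paper's separate treatment of the subcase $\gamma_1=\gamma_2$, since equal-height ideals distinct from $J_1$ are automatically disjoint from it and contribute zero upon restriction.
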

\begin{proof}
Let $J_i$, for $i=1\dots,n$, be distinct translated fractional ideals, of height $\g_i$ say. Suppose $f_i\in\calL^{J_i}$ for each $i$, with $\sum_if_i=0$.
We prove that $f_i=0$ for all $i$ by induction on $n$. We may suppose that $\g_1\le\g_2\le\cdots\le\g_n$; write $J_i=\Coset{a_i}{\g_i}$.

If $\g_1<\g_2$  then \[f_1(a_1+t(\g_1)x)=-\sum_{i>1}f_i(a_1+t(\g_1)x)\] for all $x\in\roi{F}$. The first case of lemma \ref{lemma_lifted_functions_1} implies that $f_1$ is therefore the lift of a constant function, and so $f_1=0$ (for $\calL$ contains no other constant function). The assertion now follows by the inductive hypothesis.

If $\g_1=\g_2$ then the linear dependence relation breaks into two separate relations:
\[\sum_{i\; :\; J_i\subseteq J_1}f_i=0,\qquad\sum_{i\; :\;J_i\subseteq J_2}f_i=0.\]
The inductive hypothesis implies that $f_i=0$ for all $i$.
\end{proof}

This linear independence result clearly allows us to extend the $\int^J$, as $J$ varies over all translated fractional ideals, to a single functional:

\begin{definition}
Let $\calL(F)_{\Comp}$ be the space of complex-valued functions spanned by $\calL(J)$ for all translated fractional ideals $J$. Let $\int^F:\calL(F)_{\Comp}\to\CG$ denote the unique linear map such that if $f\in\calL(J)$ for some $J$ of height $\gamma$, then $\int^F(f)=\int^J(f)\;X^{\gamma}$.

$\calL(F)_{\Comp}$ will be referred to as the space of complex-valued integrable functions on $F$.
\end{definition}

Remarks \ref{remarks_lifted_functions} imply that $\calL(F)_{\Comp}$ is closed under translation from $F$ and that $\int^F$ is translation invariant. We will of course usually write $\int^Ff(x)\,dx$ in place of $\int^F(f)$

\begin{remark}
If $A$ were an arbitrary $\Comp$-algebra and elements $c_{\g}\in A$ were given for each $\g\in\Gamma$, we could define an $A$-valued linear translation invariant integral on $\calL(F)$ by replacing $X^{\g}$ by $c_{\g}$ in the previous definition. However, using $X^{\g}$ ensures compatibility of the integral with the multiplicative group $\mult{F}$, in that it implies the existence of an absolute value with expected properties; see lemma \ref{lemma_abs_value}.

This phenomenon also appears when extending the integration theory to $F^n$, $\mbox{M}_n(F)$, and $\mbox{GL}_n(F)$. The best results are obtained not simply by trying integrate over a single algebraic group as a closed problem, but by taking into account the action of other groups on it.
\end{remark}

\begin{remark}
We consider how $\calL(F)_{\Comp}$ and $\int^F$ depend on $t$.

Let $t'$ be another splitting of the valuation: that is, $t'$ is a homomorphism from $\Gamma$ to $\mult{F}$ with $\nu\circ t'=\mbox{id}_{\Gamma}$. Then there is a homomorphism $u:\Gamma\to\mult{\roi{F}}$ which satisfies $t(\g)=u(\g)t'(\g)$ for $\g\in\Gamma$. Let $g\in\calL$, $a\in F$, and $\g\in\Gamma$; let $f$ be the lift of $g$ at $a,\g$ with respect to $t$, and $f'$ the lift of $g$ at $a,\g$ with respect to $t'$. Thus, by definition, $f$ and $f'$ both vanish off $J=a+t(\g)\roi{F}=a+t'(\g)\roi{F}$, and for $x\in\roi{F}$, \[f(a+t(\g)x)=g(\res{x}),\qquad f'(a+t'(\g)x)=g(\res{x}).\] Therefore $f'(a+t(\g)x)=g(\res{u(\g)}^{-1}\res{x})$ and so $\int^J(f')=|\res{u(\g)}|\int g(y)\,dy=|\res{u(\g)}|\int^J(f)$.

Let $\int^{J,t'}$ (resp. $\int^{F,t'}$) denote the integral over $J$ (resp. $F$) with respect to $t'$; the previous paragraph proves that $\int^J=\abs{\res{u(\g)}}\int^{J,t'}$. Let $\sigma:\CG\to\CG$ be the $\Comp$-linear field automorphism of $\CG$ given by $\sigma(X^{\g})=|\res{u(\g)}|X^{\g}$, for $\g\in\Gamma$. Then for all $f\in\calL(F)$, the identity \[\int^Ff(x)\,dx=\sigma\left(\int^{F,t'}f(x)\,dx\right)\] follows.
\end{remark}

\begin{proposition}
If $f$ belongs to $\calL(F)_{\Comp}$, then so does $x\mapsto\abs{f(x)}$.
\end{proposition}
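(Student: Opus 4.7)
The plan is to write $f$ in canonical form as a finite sum $f = \sum_{i=1}^n h_i$ with each $h_i = g_i^{a_i,\gamma_i} \in \calL(J_i) \setminus \{0\}$ over \emph{distinct} translated fractional ideals $J_i$, and then exhibit $|f|$ as an explicit sum of lifts of $\calL$-functions. The decisive structural input is ultrametricity of the valuation: any two translated fractional ideals are either disjoint or nested, so $\bigcup_i J_i$ is the disjoint union of the maximal $J_i$ under inclusion. Splitting $f$ into pieces supported in these pairwise disjoint maximal ideals, and noting that $|\cdot|$ respects this splitting, I reduce immediately to the case that every $J_i$ is contained in some $J_1$ which is itself one of the $J_i$.

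In this reduced situation, for each $j \in \{1,\ldots,n\}$ the ancestor set $A_j := \{i : J_i \supseteq J_j\}$ is a chain under inclusion (again by ultrametricity); for each $i \in A_j \setminus \{j\}$, lemma \ref{lemma_lifted_functions_1} (case $\delta > \gamma$) shows that $h_i$ takes a single constant value on $J_j$. Write $C^*_j \in \Comp$ for the sum of these constants, so in particular $C^*_1 = 0$. A direct residue calculation shows that if $J_j \subsetneq J_{i'} \subsetneq J_i$, then the constant value of $h_i$ on $J_{i'}$ agrees with its constant value on $J_j$; this yields a telescoping relation $C^*_{i_{k-1}} = (\text{constant value of } h_{i_k} \text{ on } J_{i_{k-1}}) + C^*_{i_k}$ along any maximal chain $\{j = i_0, i_1, \ldots, i_L = 1\}$ in $A_j$.

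Now define $H_j := (|g_j + C^*_j| - |C^*_j|)^{a_j,\gamma_j}$ for each $j$. The reverse triangle inequality gives $\bigl||g_j + C^*_j| - |C^*_j|\bigr| \le |g_j|$, so the function $y \mapsto |g_j(y) + C^*_j| - |C^*_j|$ lies in $\calL$, and hence $H_j \in \calL(J_j) \subseteq \calL(F)_{\Comp}$. It remains to verify $|f| = \sum_{j=1}^n H_j$, which I do on each cell $K_j := J_j \setminus \bigcup_{k : J_k \subsetneq J_j} J_k$ (these partition $J_1$). For $x \in K_j$ one has $\{i : x \in J_i\} = A_j$, giving $f(x) = g_j(\overline{(x-a_j)t(-\gamma_j)}) + C^*_j$; then $\sum_k H_k(x) = H_j(x) + \sum_{k \in A_j \setminus \{j\}} H_k(x)$, where each outer summand evaluates on $J_j$ to $|(\text{constant value of }h_k\text{ on }J_j) + C^*_k| - |C^*_k|$. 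Substituting and applying the telescoping relation collapses the whole expression to $|g_j(\overline{(x-a_j)t(-\gamma_j)}) + C^*_j| = |f(x)|$. The main obstacle is the telescoping bookkeeping, but once the ultrametric tree structure of the $J_i$ is made explicit the identity is forced by the definition of $C^*_j$.
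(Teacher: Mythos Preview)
Your proof is correct. The paper reaches the same conclusion by a short induction on $n$ rather than by writing down a closed formula. Ordering so that $\gamma_1\le\cdots\le\gamma_n$ and assuming the $J_i$ are distinct, the paper picks $a\in J_n$, observes (via Lemma~\ref{lemma_lifted_functions_1}, case $\delta>\gamma$, together with the fact that equal-height ideals are disjoint) that each $f_r$ with $r<n$ satisfies $f_r(x)=f_r(a)$ for all $x\in J_n$, and then verifies the pointwise identity
\[|f|\;=\;\Bigl|\sum_{i<n}f_i\Bigr|\;+\;\Bigl|f_n+\sum_{i<n}f_i(a)\Bigr|\;-\;\Bigl|\sum_{i<n}f_i(a)\Bigr|.\]
The first term lies in $\calL(F)_\Comp$ by the inductive hypothesis, and the remaining two terms combine to the lift at $a,\gamma_n$ of the single Haar-integrable function $\bigl|g_n+\sum_{i<n}f_i(a)\bigr|-\bigl|\sum_{i<n}f_i(a)\bigr|$. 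Unwinding this induction gives exactly your telescoping sum $\sum_j H_j$: your constant $C^*_j$ is the paper's $\sum_{i<n}f_i(a)$ at the step where $J_j$ is peeled off, and your telescoping identity is precisely the cancellation that occurs between successive inductive steps. Your route buys an explicit global formula for $|f|$ and makes the tree structure of the $J_i$ visible; the paper's route buys brevity and avoids the chain bookkeeping.
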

\begin{proof}
The statement with $\calL$ in place of $\calL(F)_{\Comp}$ is true by definition of Haar integrability; hence the statement is true for $\calL(J)$, for $J$ any translated fractional ideal.

Now suppose that $f=\sum_{i=1}^n f_i$, with $f_i\in\calL^{J_i}$ say; we shall prove the result by induction, having done the case $n=1$. We may assume that $\g_1\le\dots\le\g_n$, where $\g_i$ is the height of $J_i$.

If $J_n=J_r$ for any $r\neq n$, then $f_n+f_r\in\calL^{J_n}$, from which the result follows by the inductive hypothesis.

Now suppose $J_n\neq J_r$ for any $r\neq n$; let $a\in J$. We claim that the identity $f_r(x)=f_r(a)$ holds for all $r\neq n$ and $x\in J_n$; indeed if $\g_r=\g_n$, then both sides vanish, and if $\g_r<\g_n$ then it follows from the first case of lemma \ref{lemma_lifted_functions_1}. With the claim established, the identity \[\abs{f}=|{\sum_{i=1}^{n-1} f_i}|+ |{f_n+\sum_{i=1}^{n-1} f_i(a)}|-|{\sum_{i=1}^{n-1}f_i(a)}|\] is easily verified, and so the proof will be complete if we can show that \[|{f_n+\sum_{i=1}^{n-1} f_i(a)}|-|{\sum_{i=1}^{n-1}f_i(a)}|\tag{$\ast$}\] belongs to $\calL(F)_{\Comp}$. Write $f_n=g^{a,\g_n}$ for some $g\in\calL$; then the function ($\ast$) is the lift at $a,\g_2$ of the Haar integrable function $|{g+\sum_{i=1}^{n-1} f_i(a)}|-|{\sum_{i=1}^{n-1}f_i(a)}|$.
\end{proof}

Although $\calL(F)_{\Comp}$ is closed under taking absolute values, the following examples show that it that there is some unusual associated behaviour.
 
\begin{example}\label{eg_null_functions}
Introduce $f_1=\Char{\{0\}}^{0,0}$, the characteristic function of $t(1)\roi{F}$, and $f_2=-2\Char{S}^{0,\g}$ where $S$ is a Haar measurable subset of $\res{F}$ with measure $1$ and $\g$ is a positive element of $\Gamma$. Let $f=f_1+f_2$.
\begin{enumerate}
\item Firstly we claim that the following hold: \[\int^F \abs{f(x)} \,dx=0,\qquad\int^F f(x)\,dx=-2X^{\g}.\]

Indeed, the second identity is immediate from the definition of the integral. For the first identity, note that as in the proof of the previous proposition (with $n=2$), \[\abs{f}=|f_1|+\abs{f_2+f_1(0)}-\abs{f_1(0)}.\] Further, $f_1(0)=1$ and the function $|f_2+1|$ is identically $1$. So $|f|=\Char{\{0\}}^{0,0}$, from which the first identity follows.

\item Secondly, the considerations above imply \[\int^F \abs{f(x)}\,dx=\int^F\abs{f_1(x)}\,dx=0,\qquad\int^F \abs{f(x)-f_1(x)}\,dx=-2X^{\g}\]

\item Finally, consider the translated function $f'(x)=f(x-a)$, where $a$ is any element of $F$ not in $\roi{F}$. Then $f'$ and $f$ have disjoint support and so
\begin{align*}
\int^F\abs{f(x)-f'(x)}\,dx
    &=\int^F\abs{f(x)}+\abs{f'(x)}\,dx\\
    &=\int^F\abs{f(x)}\,dx+\int^F\abs{f'(x)}\,dx=0
\end{align*}
by translation invariance of the integral. Also, $\int^F f(x)-f'(x)\,dx=0$. Thus $g=f-f'$ provides an example of a complex-valued integrable function on $F$ such that $\int^F \abs{g(x)}\,dx=\int^F g(x)\,dx=0$, but where the components of $g$ in each $\calL(J)$ are lifts of non-null functions.
\end{enumerate}
\end{example}

As will become apparent, it is more natural to integrate $\CG$-valued functions on $F$ than complex-valued ones, so we define our main class of functions as follows:

\begin{definition}
A $\CG$-valued function on $F$ will be said to be \emph{integrable} if and only if it has the form $x\mapsto\sum_i f_i(x)\,p_i$ for finitely many $f_i\in\calL(F)_{\Comp}$ and $p_i\in\CG$. The integral of such a function is defined to be \[\int^F f(x)\,dx= \sum_i\int^F f_i(x)\,dx\,p_i.\] This is well defined. The $\CG$ space of all such functions will be denoted $\calL(F)$; the integral is a $\CG$-linear functional on this space.
\end{definition}

In other words, $\calL(F)=\calL_{\Comp}(F)\otimes_{\Comp}\CG$ and the integral is extended in the natural way. 

The integrable functions which are complex-valued are precisely $\calL(F)_{\Comp}\subset\calL(F)$, so there is no ambiguity in the phrase 'complex-valued integrable function'.

For the sake of completeness, we summarise this section as follows:

\begin{proposition}
$\calL(F)$ is the smallest $\CG$ space of $\CG$-valued functions on $F$ which contain $g^{a,\g}$ for all $g\in\calL$, $a\in F$, $\g\in\Gamma$. There is a (necessarily unique) $\CG$-linear functional $\int^F$ on $\calL(F)$ which satisfies \[\int^F g^{a,\g}(x)\,dx=\int_{\res{F}} g(u)\,du\,X^{\g}.\]

$\calL(F)$ is closed under translation and $\int^F$ is translation invariant.
\end{proposition}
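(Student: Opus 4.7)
The plan is to read this proposition as a summary collecting facts already established, and so the proof will consist of unpacking definitions and invoking the linear independence result (Proposition \ref{prop_linear_independence_of_lifted_functions}) at the critical point. The work amounts to four small verifications: minimality, existence of $\int^F$, uniqueness of $\int^F$, and translation invariance.

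For \textbf{minimality}, I would argue that by construction $\calL(F)_{\Comp}$ is the $\Comp$-span of the spaces $\calL(J)=\calL^{a,\g}$ as $J=\Coset{a}{\g}$ ranges over translated fractional ideals, and each such space consists precisely of lifts $g^{a,\g}$ for $g\in\calL$. Tensoring with $\CG$ then gives $\calL(F)$ as the $\CG$-span of all lifts $g^{a,\g}$. Conversely, any $\CG$-space containing every $g^{a,\g}$ must contain their $\Comp$-linear combinations, hence $\calL(F)_{\Comp}$, and then their $\CG$-linear combinations, hence $\calL(F)$.

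For \textbf{existence} of $\int^F$ satisfying the stated formula, I would recall that the formula already defines $\int^J$ on each $\calL(J)$, well-definedly by Remarks \ref{remarks_lifted_functions} and translation invariance of the Haar integral on $\res F$. Proposition \ref{prop_linear_independence_of_lifted_functions} then lets these integrals be assembled into a single $\Comp$-linear functional on $\calL(F)_{\Comp}$ by declaring $f\mapsto \int^J(f)\,X^{\g}$ on each summand. Extending $\CG$-linearly through the tensor product $\calL(F)=\calL(F)_{\Comp}\otimes_{\Comp}\CG$ gives the required functional. \textbf{Uniqueness} is immediate: any $\CG$-linear functional on $\calL(F)$ agreeing with the prescribed formula on lifts agrees on their $\CG$-span, which is all of $\calL(F)$.

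For \textbf{translation invariance}, I would apply Remark \ref{remarks_lifted_functions}(ii): the translate $x\mapsto g^{a,\g}(x+\tau)$ is itself the lift $g^{a-\tau,\g}$, so $\calL(F)_{\Comp}$ and hence $\calL(F)$ are closed under translation, and the integral of the translate equals $\int_{\res F}g(u)\,du\cdot X^{\g}$, the same value as before translation. Extending $\CG$-linearly to all of $\calL(F)$ preserves both closure and invariance. There is no serious obstacle here — the one nontrivial ingredient is Proposition \ref{prop_linear_independence_of_lifted_functions}, which ensures the $\calL(J)$ sum directly inside $\calL(F)_{\Comp}$ so that assigning $X^{\g}$-weights to different heights does not produce a contradiction; everything else is bookkeeping.
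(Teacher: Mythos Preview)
Your proposal is correct and matches the paper's approach exactly: the paper explicitly presents this proposition as a summary (``For the sake of completeness, we summarise this section as follows'') and gives no separate proof, the content having already been established in the preceding definitions, Remarks \ref{remarks_lifted_functions}, and Proposition \ref{prop_linear_independence_of_lifted_functions}. You have correctly identified that the only nontrivial input is the linear independence result, with everything else being bookkeeping.
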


\subsection{Generalisations}
Let us discuss several extensions of the theory.

\subsubsection*{Abstraction}
Examination of the proofs in this section leads to the the following abstraction of the theory:

Let $F',\nu',t',\Gamma'$ satisfy the same conditions as $F,\nu,t,\Gamma$, except that we do not suppose $\res{F}'$ is a local field. Let $L$ be an arbitrary field, and $\calL'$ an $L$ space of $L$-valued functions on $\res{F}'$, equipped with an $L$-linear functional $I$, with the following properties:
\begin{enumerate}
\item $\calL'$ is closed under translation from $\res{F}'$ and $I$ is translation invariant (ie. $f\in\calL'$ and $a\in\res{F}'$ implies $y\mapsto f(y+a)$ is in $\calL'$ with image under $I$ equal to $I(f)$).
\item $\calL'$ contains no non-zero constant functions.
\end{enumerate}

Let $\calL'(F')$ be the smallest $L(\Gamma')$ space of $L(\Gamma')$-valued functions on $F$ which contains $f^{a,\g}$ for $f\in\calL'$, $a\in F'$, $\g\in\Gamma'$. Then there is a (necessarily) unique $L(\Gamma')$-linear functional $I^{F'}$ on $\calL'(F')$ which satisfies $I^{F'}(f^{a,\g})=I(f)\,X^{\g}$. Further, the pair $\calL'(F'), I^{F'}$ satisfy (i) and (ii) with the field $L(\Gamma')$ in place of $L$.

\subsubsection*{Integration on $F\times F$}
See appendix \ref{appendix_product_integration}.
%
%
%

\section{Measure theory} \label{measure-theory}

In this section we recover a basic measure theory from the integration theory; results of [AoAS] are reproduced and extended.

\begin{definition}
A \emph{distinguished} subset of $F$ is a set of the form $\DistSet aS{\g}$, where $a\in F$, $\g\in\Gamma$, and $S$ is a subset of $\Res{F}$ of finite Haar measure. $\g$ is said to be the $\emph{level}$ of the set.

Let $D$ denote the set of all distinguished subsets of $F$; let $R$ denote the ring of sets generated by $D$ (see the appendix for the definition of 'ring').
\end{definition}

\begin{remark}\label{rem_char_fns1}
Note that the characteristic function of a distinguished set $\DistSet aS{\g}$ is precisely the lift of the characteristic function of $S$ at $a,\gamma$. Proposition \ref{prop_linear_independence_of_lifted_functions} proves that if $\DistSet{a_1}{S_1}{\g_1}=\DistSet{a_2}{S_2}{\g_2}$, then $\g_1=\g_2$ and $S_1$ is a translate of $S_2$. In particular, the level is well defined.
\end{remark}

\begin{lemma}
Let $A_i=\DistSet {a_i}{S_i}{\g_i}$, $i=1,2$, be distinguished sets with non-empty intersection.
\begin{itemize}
\item[-] If $\g_1=\g_2$, then $A_1\cap A_2$ and $A_1\cup A_2$ are distinguished sets of level $\g_1$.
\item[-] If $\g_1\neq\g_2$, then $A_1\subseteq A_2$ if $\g_1>\g_2$, and $A_2\subseteq A_1$ if $\g_2>\g_1$.
\end{itemize}
\end{lemma}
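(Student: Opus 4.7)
The plan is to translate the set-theoretic assertion into a statement about lifted characteristic functions, exploiting the observation of Remark~\ref{rem_char_fns1} that $\chi_i := \Char{S_i}^{a_i,\g_i}$ has support exactly $A_i$, and then to apply Lemma~\ref{lemma_lifted_functions_1} case-by-case.

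For the equal-level case $\g_1 = \g_2 = \g$, I would pick any $x_0 \in A_1 \cap A_2$ and put $c_i := (x_0 - a_i)t(-\g)$, which lies in $\roi{F}$ by construction. Using the identity $-c + \rho^{-1}(T) = \rho^{-1}(T - \res{c})$ valid for any $c \in \roi{F}$ and $T \subseteq \res{F}$, both sets recentre at the common point $x_0$:
\[A_i = a_i + t(\g)\rho^{-1}(S_i) = x_0 + t(\g)\rho^{-1}(S_i - \res{c_i}).\]
Writing $S_i' := S_i - \res{c_i}$, the set operations distribute cleanly:
\[A_1 \cup A_2 = x_0 + t(\g)\rho^{-1}(S_1' \cup S_2'),\qquad A_1 \cap A_2 = x_0 + t(\g)\rho^{-1}(S_1' \cap S_2').\]
Since $S_1' \cup S_2'$ and $S_1' \cap S_2'$ retain finite Haar measure, both are distinguished of level $\g$.

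For the unequal-level case it suffices by symmetry to treat $\g_1 > \g_2$. I would evaluate $\chi_2$ on the coset $a_1 + t(\g_1)\roi{F} \supseteq A_1$. Since $\g_1 > \g_2$, the first case of Lemma~\ref{lemma_lifted_functions_1} (with $\delta = \g_1$, $\g = \g_2$, $b = a_1$) applies and tells us that $\chi_2(a_1 + t(\g_1)y)$ is independent of $y \in \roi{F}$: it is either identically $0$ or identically $\Char{S_2}(\res{(a_1-a_2)t(-\g_2)})$, according as $a_1 \notin$ or $\in a_2 + t(\g_2)\roi{F}$. The hypothesis $A_1 \cap A_2 \neq \emptyset$ exhibits some $y$ for which $\chi_2(a_1 + t(\g_1)y) = 1$, so the constant value must be $1$; consequently $\chi_2 \equiv 1$ on $a_1 + t(\g_1)\roi{F}$, whence $A_1 \subseteq A_2$.

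The genuine content is already in Lemma~\ref{lemma_lifted_functions_1}, so no serious obstacle is anticipated; the only work left is algebraic bookkeeping with $t$, $\rho$, and residues, and verifying that translating and intersecting preserves finite Haar measure in the equal-level case.
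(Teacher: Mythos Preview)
Your proof is correct. The paper's own proof is simply ``This is immediate from the definition of a distinguished set,'' so your argument is a careful expansion of what the paper leaves implicit; the equal-level recentring trick and the use of Lemma~\ref{lemma_lifted_functions_1} in the unequal-level case are both sound, though for the latter a bare coset argument (any two points of $A_1$ differ by an element of $t(\gamma_1)\roi{F}\subseteq t(\gamma_2+1)\roi{F}$, hence have the same residue after scaling by $t(-\gamma_2)$) would avoid invoking the lemma and is closer to what ``immediate'' presumably means.
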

\begin{proof}
This is immediate from the definition of a distinguished set.
\end{proof}

Referring to the appendix, it has just been shown that $D$ is a d-class of sets. By proposition \ref{prop_ddd}, the characteristic function of any set in $R$ may be written as the difference of two sums, each of characteristic functions of sets in $D$; therefore the characteristic function of any set in $R$ belongs to $\calL(F)_{\Comp}$.

\begin{definition}
Define the measure $\mu^F(W)$ of a set $W$ in $R$ by \[\mu^F(W)=\int^F \Char{W}(x)\,dx.\]
\end{definition}

By the properties of the integral, $\mu^F$ is a translation-invariant finitely additive set function $R\to\Real\Gamma$ (the real group algebra of $\Gamma$). For a distinguished set $A=\DistSet aS{\g}$, remark \ref{rem_char_fns1} implies \[\mu^F(A)=\int^F(\Char{A})=\int^F(\Char{S}^{a,\g})=\mu(S)X^{\g},\] where $\mu$ denotes our choice of Haar measure on $\res{F}$. The following examples demonstrate the unusual behaviour of this measure:

\begin{example}
\mbox{}
\begin{enumerate}
\item For $\g\in\Gamma$, the set $t(\g)\roi{F}=t(\g-1)\rho^{-1}(\{0\})$ is distinguished, with measure zero.
\item Let $S$ be a subset of $\res{F}$ of finite measure. The set $\rho^{-1}(\res{F}\setminus S)=\roi{F}\less\rho^{-1}(S)$ belongs to $R$ and has measure $-\mu(S)$. Compare this with example \ref{eg_null_functions}
\item $\mu^F$ is not countably additive. Indeed, write $\res{F}$ as a countable disjoint union of sets of finite measure; $F=\bigsqcup_i S_i$ say. Then $\roi{F}=\bigsqcup_i\rho^{-1}(S_i)$ has measure zero, while $\sum_i\mu^F(\rho^{-1}(S_i))=\infty$.
\item Suppose that $\res{F}=\mathbb{R}$. Set $A_{2n-1}=nt(-1)+\rho^{-1}([0,1/n])$ and $A_{2n}=nt(-1)+\rho^{-1}(\mathbb{R}\less[0,1/n])$ for all natural $n$. Then $\mu^F(A_{2n-1})=1/n$, $\mu^F(A_{2n})=-1/n$, and $\bigsqcup_i A_i=\bigsqcup_n nt(-1)+\roi{F}=t(-1)\rho^{-1}(\mathbb{N})$ has measure $0$.

The series $\sum_i\mu^F(A_i)$ is conditionally convergent in $\mathbb{R}$ (ie. convergent, but not absolutely convergent). By a theorem of Riemann (see eg. \cite[chapter 8.18]{Apostol}), there exists, for any real $q$, a permutation $\sigma$ of $\mathbb{N}$ such that $\sum_i \mu^F(A_{\sigma(i)})$ converges to $q$. But regardless of the permutation, $\mu^F(\bigsqcup_i A_{\sigma(i)})=0$.
\end{enumerate}
\end{example}

Let us consider a couple of examples in greater detail and give a more explicit description of the meausure:

\begin{example}\mbox{}
\begin{enumerate}
\item Suppose that $F$ is an $n$-dimensional non-archimedean local field, with local parameters $t_1,\dots,t_n$. Recall that we view $F$ as a valued field over the local field $\res{F}=F_1$, rather than over the finite field $F_0$. The results of this section prove the existence of a finitely additive set function $\mu^F$ on the appropriate ring of sets, taking values in $\mathbb{R}[X_2,\dots,X_n]$, which satisfies \[\mu^F(a+t_1^{r_1}\dots t_n^{r_n}\ROI{F})=q^{-r_1}X_2^{r_2}\dots X_n^{r_n}\] for $a\in F$ and integers $r_i$. Here $\ROI{F}$ denotes the ring of integers of $F$ with respect to the rank $n$ valuation.

However, we have not made use of any topological property of $F$; in particular, this result holds for an arbitrary field with value group $\mathbb{Z}^{n-1}$ and a non-archimedean local field as residue field. This measure theory therefore extends that developed in [AoAS], while also providing proofs of statements in [AoAS] for the case in which the local field is archimedean.

\item Suppose that $F=\res{F}((t))$, the field of formal Laurent series over $\res{F}$, or $F=\res{F}(t)$, the rational function field (here we write $t=t(1)$). Then a typical distinguished set has the form
\[\begin{array}{cl}
    a(t)+St^n+t^{n+1}\res{F}[[t]] & \mbox{(Laurent series case)}\\
    a(t)+St^n+t^{n+1}\res{F}[t] & \mbox{(rational functions case)}
\end{array}\]
for $a(t)\in F$, and $S\subset\res{F}$ of finite Haar measure. Such a set has measure $\mu(S)X^n$, where $\mu$ denotes our choice of Haar measure on $\res{F}$.
\end{enumerate}
\end{example}
%
%
%
%
%

\section{Harmonic analysis on $F$} \label{section_Harmonic_analysis}
In this section, we develop elements of a theory of harmonic analysis on $F$.
 
\begin{definition}
Suppose that $\psi:F\to S^1$ is a homomorphism of the additive group of $F$ into the group of complex numbers of unit modulus. Then $\psi$ is said to be a \emph{good character} of $F$ if it is trivial, or if it satisfies the following two conditions:
\begin{enumerate}
\item There exists $\frakf\in\Gamma$ such that $\psi$ is trivial on $t(\frakf)\roi{F}$, but non-trivial on $t(\frakf-1)\roi{F}$; $\frakf$ is said to be the \emph{conductor} of $\psi$.
\item The conductor $\res{\psi}$ of the additive group of $\Res{F}$ defined by $\res{\psi}(\res{x})=\psi(t(\frakf-1)x)$, for $x\in\roi{F}$, is continuous.
\end{enumerate}
The conductor of the trivial character may be said to be $-\infty$. The induced character on $\res{F}$ as in (ii) will be always be denoted $\Res{\psi}$.
\end{definition}

The definition of a good character is designed to replace the continuity assumption which would be imposed if $F$ had a topology.

\begin{example}\label{example_character_on_laurent_series}
Suppose that $F=\res{F}((t))$, the field of formal Laurent series over $\res{F}$ (here $t(1)=t$). Let $\psi_{\res{F}}$ be a continuous character of $\res{F}$. Then $\sum_i a_it^i\mapsto\psi_{\res{F}}(a_n)$ is a good character of $F$ of conductor $n+1$ and induced character $\psi_{\res{F}}$.
\end{example}

\begin{lemma}\label{lemma_translation_of_good_characters}
Suppose that $\psi$ is a good character of $F$ of conductor $\frakf$; let $a\in F$. Then $x\mapsto\psi(ax)$ is a good character of $F$, with conductor $\frakf-\nu(a)$; the character induced on $\Res{F}$ by $x\mapsto\psi(ax)$ is $y\mapsto\Res{\psi}(\eta(a)y)$.
\end{lemma}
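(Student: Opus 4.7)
The plan is a direct computation using only the definitions of conductor, induced character, and the homomorphism $\eta$.

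First I would observe that $\psi_a := \psi(a\,\cdot\,)$ is visibly an additive homomorphism $F \to S^1$ (assuming $a \ne 0$ and $\psi$ non-trivial; the trivial cases are immediate). The key algebraic input is that $a = \eta(a)^{\sim}\,t(\nu(a))$ modulo a unit—more precisely, $a\,t(-\nu(a))$ is an element of $\mult{\roi{F}}$ whose residue is $\eta(a)$. Consequently, for every $\delta \in \Gamma$,
\[
a\cdot t(\delta)\roi{F} \;=\; t(\nu(a)+\delta)\,\bigl(a t(-\nu(a))\bigr)\roi{F} \;=\; t(\nu(a)+\delta)\roi{F}.
\]

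Taking $\delta = \frakf - \nu(a)$, this shows $\psi_a$ is trivial on $t(\frakf-\nu(a))\roi{F}$ (because $\psi$ is trivial on $t(\frakf)\roi{F}$), and taking $\delta = \frakf - \nu(a) - 1$ shows $\psi_a$ is non-trivial on $t(\frakf-\nu(a)-1)\roi{F}$ (because $\psi$ is non-trivial on $t(\frakf-1)\roi{F}$). Hence the conductor of $\psi_a$ is $\frakf - \nu(a)$, assuming condition (ii) holds.

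For the induced character, by definition $\res{\psi_a}(\res{y}) = \psi_a\bigl(t(\frakf-\nu(a)-1)y\bigr)$ for $y \in \roi{F}$. Using $a\,t(-\nu(a)) \in \mult{\roi{F}}$ and rewriting,
\[
\res{\psi_a}(\res y) \;=\; \psi\bigl(t(\frakf-1)\cdot a t(-\nu(a))\,y\bigr) \;=\; \res{\psi}\bigl(\res{a t(-\nu(a))}\cdot \res{y}\bigr) \;=\; \res{\psi}(\eta(a)\,\res y),
\]
where in the middle step I use the definition of $\res{\psi}$ applied to the element $a t(-\nu(a))y \in \roi{F}$. Continuity of $\res{\psi_a}$ then follows since it is the composite of multiplication by $\eta(a)\in\mult{\res F}$ (continuous on $\res F$) with the continuous character $\res\psi$, completing the verification of condition (ii) for $\psi_a$.

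The argument is essentially bookkeeping, and I do not expect a real obstacle; the only point requiring care is the observation that $a t(-\nu(a))$ lies in $\mult{\roi{F}}$ with residue $\eta(a)$, which lets us move between the multiplicative shift on $F$ and the multiplicative shift by $\eta(a)$ on $\res{F}$ in a single line.
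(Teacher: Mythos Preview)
Your proof is correct and is precisely the direct verification the paper has in mind; the paper's own proof consists of the single sentence ``This is easily checked.'' Your computation unpacks the two defining conditions of a good character using the identity $a\,t(\delta)\roi{F}=t(\nu(a)+\delta)\roi{F}$ and the definition $\eta(a)=\res{a\,t(-\nu(a))}$, which is exactly what that sentence is asking the reader to do.
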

\begin{proof}
This is easily checked.
\end{proof}

Given $\psi,a$ as in the previous lemma we will write $\psi_a$ for the translated character $x\mapsto\psi(ax)$ (and we employ similar notation for characters of $\res{F}$).

Before proceeding, we must make a simple assumption:
\begin{center}
\emph{We assume that a non-trivial good character $\psi$ exists on $F$.}
\end{center}
By the previous lemma we assume further that $\psi$ has conductor $1$, and we fix such a character for this section. With this choice of conductor, $x\in\roi{F}$ implies $\Res{\psi}(\Res{x})=\psi(x)$. We will take Fourier transforms of integrable functions $g$ on $\Res{F}$ with respect to the character $\Res{\psi}$; that is, $\hat{g}(x)=\int g(y)\Res{\psi}(xy)dy$.

Let $\calL(F,\psi)$ denote the $\CG$ space of $\CG$-valued functions on $F$ spanned by $f\psi_a$, for $f\in\calL(F)$, $a\in F$. The aim of this section is proposition \ref{prop_extn_of_integral_to_characters}, which states that the integral has a unique translation invariant extension to this space of functions.
 
\begin{remark} \label{remark_characters_of_higher_local_field}
Such a character certainly exists on a higher local field. Indeed, such a field is self-dual: If $\psi,\psi_1$ are good characters with $\psi$ non-trivial then there is $\al\in\mult{F}$ such that $\psi(x)=\psi_1(\al x)$ for all $x\in F$. For more details, see [AoAS] section 3.
\end{remark}

It is convenient for the following results to write $\calL^{\g}$ (where $\g\in\Gamma$) for the sum of the spaces $\calL(J)$ over all translated fractional ideals of height $\g$; this sum is direct by proposition \ref{prop_linear_independence_of_lifted_functions}. Note that if $f\in\calL^{\g}$ and $a\in F$ with $\nu(a)>\g$ then $f(x+a)=f(x)$ for all $x\in F$.

Certain products of an integrable function with a good character are still integrable:
\begin{lemma}\label{lemma_int_of_characters_1}
Let $J=\Coset{a}{\g}$ be a translated fractional ideal and $\al\in F$. If $\g=-\nu(\al)$, then $\psi_{\al}\Char{J}$ is the lift
of $\psi(\al a)\Res{\psi}_{\eta(\al)}$ at $a,\g$; if $\g>-\nu(\al)$, then $\psi_{\al}$ is constantly $\psi(\al a)$ on $J$.

Therefore, if $\g\ge-\nu(\al)$ and $f$ is in $\calL^{\g}$ then $f\psi_{\al}$ is also in $\calL^{\g}$.
\end{lemma}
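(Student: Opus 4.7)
The proof should be a direct computation from the definitions, broken into the two cases for $\gamma$ versus $-\nu(\alpha)$.

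My plan is to pick an arbitrary element $x \in J$, write it as $x = a + t(\gamma)y$ with $y \in \roi{F}$, and expand
\[\psi_\alpha(x) = \psi(\alpha a)\,\psi(\alpha t(\gamma) y).\]
The key observation is that $\nu(\alpha t(\gamma)) = \nu(\alpha) + \gamma$, so in the case $\gamma = -\nu(\alpha)$ the element $\alpha t(\gamma)$ lies in $\mult{\roi{F}}$ and its residue is precisely $\res{\alpha t(-\nu(\alpha))} = \eta(\alpha)$; since $\psi$ has conductor $1$, restriction to $\roi{F}$ yields $\psi(z) = \res{\psi}(\res{z})$ for $z \in \roi{F}$, and hence
\[\psi(\alpha t(\gamma) y) = \res{\psi}(\eta(\alpha)\res{y}) = \res{\psi}_{\eta(\alpha)}(\res{y}).\]
Matching this against the defining formula for a lift at $a,\gamma$ gives exactly $\psi_\alpha \Char{J} = [\psi(\alpha a)\,\res{\psi}_{\eta(\alpha)}]^{a,\gamma}$.

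In the case $\gamma > -\nu(\alpha)$, I use that $\Gamma$ has a minimal positive element $1$, so $\nu(\alpha) + \gamma \ge 1$; thus $\alpha t(\gamma) y$ lies in $t(1)\roi{F}$, on which $\psi$ is trivial by definition of the conductor. Consequently $\psi_\alpha(x) = \psi(\alpha a)$ throughout $J$, as claimed.

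For the final assertion, I decompose $f \in \calL^{\g}$ as a finite sum $f = \sum_i f_i$ with $f_i = g_i^{a_i,\g} \in \calL(J_i)$ for translated fractional ideals $J_i$ of height $\g$; since the $f_i$ vanish off $J_i$, it suffices to treat each summand separately. On $J_i$ I apply the two cases above: when $\g = -\nu(\alpha)$, the product $f_i \psi_\alpha$ is the lift at $a_i,\g$ of $\psi(\alpha a_i)\, g_i\, \res{\psi}_{\eta(\alpha)}$, which lies in $\calL$ because $g_i$ is Haar integrable and the character $\res{\psi}_{\eta(\alpha)}$ is continuous, hence bounded; when $\g > -\nu(\alpha)$, $f_i \psi_\alpha$ is just the scalar $\psi(\alpha a_i)$ times $f_i$, which stays in $\calL(J_i)$.

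The only mild obstacle is bookkeeping: one must be careful to invoke the normalisation ``conductor $1$'' consistently (so that both $\psi|_{t(1)\roi{F}} = 1$ and $\psi|_{\roi{F}} = \res{\psi} \circ \rho$ are available), and to recognise $\res{\alpha t(\gamma)}$ as $\eta(\alpha)$ in the critical case. Everything else is straightforward verification from the definitions of lift, of $\eta$, and of a good character.
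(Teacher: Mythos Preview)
Your proof is correct and follows exactly the approach the paper takes: the paper's proof simply reads ``The identities may be easily verified by evaluating on $\Coset{a}{\g}$. The final statement follows by linearity.'' You have carried out precisely this direct verification, supplying the details the paper omits.
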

\begin{proof}
The identities may be easily verified by evaluating at on $\Coset{a}{\g}$. The final statement follows by linearity.
\end{proof}

In contrast with the previous lemma we now consider the case $\g\le-\nu(\al)$:
\begin{lemma}\label{lemma_int_of_characters_2}
Let $\al_i,\g_i$ be finitely many ($1\le i\le m$, say) elements of $F,\Gamma$ respectively, and let $f_i\in\calL^{\g_i}$ for each $i$. Suppose further that $\nu(\al_i)<-\g_i$ for each $i$ and that $\sum_if_i\psi_{\al_i}$ is integrable on $F$. Then $\int^F\sum_if_i\psi_{\al_i}=0$.
\end{lemma}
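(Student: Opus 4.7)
The plan is to prove this by induction on the number of summands $m$, using translation invariance of $\int^F$ together with a cleverly chosen translation that acts as a non-trivial scalar on exactly one of the terms.

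After relabelling we may assume $\gamma_1 = \max_i \gamma_i$. The hypothesis $\nu(\alpha_1) < -\gamma_1$, combined with the fact that $1$ is the minimal positive element of $\Gamma$, gives $\nu(\alpha_1) \le -\gamma_1 - 1$, so the conductor $1 - \nu(\alpha_1)$ of the good character $\psi_{\alpha_1}$ is at least $\gamma_1 + 2$. Thus $\psi_{\alpha_1}$ is non-trivial on $t(\gamma_1 + 1)\roi{F}$, and we may pick $\tau \in t(\gamma_1 + 1)\roi{F}$ with $\psi(\alpha_1 \tau) \ne 1$. Since $\nu(\tau) > \gamma_i$ for every $i$, the remark preceding lemma \ref{lemma_int_of_characters_1} gives $f_i(x + \tau) = f_i(x)$ for all $i$. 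Writing $h = \sum_i f_i \psi_{\alpha_i}$, this yields
\[ h(x + \tau) = \sum_i \psi(\alpha_i \tau)\, f_i(x)\, \psi_{\alpha_i}(x). \]

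Set $h' = h(\cdot + \tau) - \psi(\alpha_1 \tau)\, h$. Translation invariance and $\CG$-linearity of $\int^F$ give $\int^F h' = (1 - \psi(\alpha_1 \tau))\int^F h$. On the other hand, the explicit formula above shows that the $i = 1$ contributions cancel in $h'$, leaving
\[ h' = \sum_{i=2}^m \bigl(\psi(\alpha_i \tau) - \psi(\alpha_1 \tau)\bigr)\, f_i\, \psi_{\alpha_i}, \]
a sum of only $m - 1$ terms still satisfying the hypotheses of the lemma (each coefficient times $f_i$ remains in $\calL^{\gamma_i}$, and $\nu(\alpha_i) < -\gamma_i$). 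By induction $\int^F h' = 0$, and since $\psi(\alpha_1 \tau) \ne 1$ we conclude $\int^F h = 0$. The base case $m = 1$ is the same computation with an empty sum for $h'$.

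The only non-routine step will be the construction of $\tau$: it is essential to order the indices so that $\gamma_1$ is maximal, because only then does $\nu(\alpha_1) < -\gamma_1$ guarantee that $\psi_{\alpha_1}$ is non-trivial on $t(\gamma_1+1)\roi{F}$. Otherwise the argument is a formal use of translation invariance, with the integrability of $h'$ following immediately from closure of $\calL(F)$ under translation.
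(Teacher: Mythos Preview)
Your proof is correct and follows essentially the same approach as the paper: induction on $m$, using translation invariance of $\int^F$ by a well-chosen element so that one summand drops out. The only minor difference is that you order the indices so that $\gamma_1$ is maximal and take $\tau\in t(\gamma_1+1)\roi{F}$, which makes \emph{all} $f_i$ invariant under the translation; the paper instead takes $y\in t(-\nu(\alpha_m))\roi{F}$ with no ordering assumption, so only $f_m$ is guaranteed invariant while the other $f_i$ merely get translated within $\calL^{\gamma_i}$ --- but this is a cosmetic variation of the same argument.
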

\begin{proof}
The result is proved by induction on $m$. Let $y\in t(-\nu(\al_m))\roi{F}$ satisfy $\psi_{\al_m}(y)\neq 1$. The functions
\begin{align*}
x&\mapsto\sum_if_i(x+y)\psi_{\al_i}(x+y)=
    \sum_i\psi_{\al_i}(y)f_i(x+y)\psi_{\al_i}(x)\\
x&\mapsto\sum_i\psi_{\al_m}(y)f_i(x)\psi_{\al_i}(x)
\end{align*}
are integrable on $F$, the first having integral equal to that of $\sum_if_i\psi_{\al_i}$ by translation invariance of $\int^F$. Taking the difference of the two functions, noting that $f_m(x+y)=f_m(x)$, and applying the inductive hypothesis, obtains \[\int^F\sum_if_i(x)\psi_{\al_i}(x)=\psi_{a_m}(y)\int^F \sum_if_i(x)\psi_{\al_i}(x),\] competing the proof.
\end{proof}

The main result of this section may now be proved:
\begin{proposition}\label{prop_extn_of_integral_to_characters}
$\int^F$ has a unique extension to a translation-invariant $\CG$-linear functional on $\calL(F,\psi)$.
\end{proposition}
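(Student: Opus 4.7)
The plan is to decompose any element of $\calL(F,\psi)$ into a piece already in $\calL(F)$ plus a piece forced to integrate to zero by lemma \ref{lemma_int_of_characters_2}, and to define the extended integral by the value on the first piece. First, using proposition \ref{prop_linear_independence_of_lifted_functions}, $\calL(F) = \bigoplus_{\g} \calL^{\g}$, so every spanning element $f\psi_{\al}$ of $\calL(F,\psi)$ can be split according to heights: $f = \sum_{\g} f_{\g}$ with $f_{\g}\in\calL^{\g}$. By lemma \ref{lemma_int_of_characters_1}, the terms with $\g \ge -\nu(\al)$ contribute summands $f_{\g}\psi_{\al} \in \calL^{\g} \subseteq \calL(F)$. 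Define $\calN$ to be the $\CG$-span of functions of the form $f\psi_{\al}$ with $f\in\calL^{\g}$ and $\g < -\nu(\al)$; then $\calL(F,\psi) = \calL(F) + \calN$.

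Define the extension $\widetilde{\int}^F$ by $\widetilde{\int}^F(g_0+g_1) = \int^F g_0$ whenever $g_0\in\calL(F)$ and $g_1\in\calN$. The main point, which is also the main obstacle, is well-definedness: if $g_0 + g_1 = g_0' + g_1'$ are two such decompositions, then $g_0 - g_0' \in \calL(F)\cap\calN$, i.e.\ it is a sum $\sum_i f_i \psi_{\al_i}$ with $f_i\in\calL^{\g_i}$, $\g_i < -\nu(\al_i)$, that happens to lie in $\calL(F)$. Lemma \ref{lemma_int_of_characters_2} applies and gives $\int^F(g_0 - g_0') = 0$, so $\int^F g_0 = \int^F g_0'$. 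That $\widetilde{\int}^F$ extends $\int^F$ is immediate by taking $g_1=0$, and $\CG$-linearity is built in.

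For translation invariance, fix $\tau \in F$ and write a generator of $\calN$ as $f\psi_{\al}$ with $f\in\calL^{\g}$, $\g<-\nu(\al)$. Then
\[(f\psi_{\al})(x+\tau) = \psi_{\al}(\tau)\,f(x+\tau)\,\psi_{\al}(x),\]
and since $\calL^{\g}$ is translation-invariant, this again lies in $\calN$. Thus the decomposition $g = g_0+g_1$ yields $g(\cdot+\tau) = g_0(\cdot+\tau) + g_1(\cdot+\tau)$ with the first summand in $\calL(F)$ and the second in $\calN$; translation invariance of $\int^F$ on $\calL(F)$ gives $\widetilde{\int}^F g(\cdot+\tau) = \widetilde{\int}^F g$.

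Uniqueness is the cleanest part. Any translation-invariant $\CG$-linear extension $I$ must agree with $\int^F$ on $\calL(F)$ and must vanish on generators of $\calN$: if $f\in\calL^{\g}$ with $\g<-\nu(\al)$, then, since $\psi_{\al}$ has conductor $1-\nu(\al)$, there exists $y\in t(-\nu(\al))\roi{F}$ with $\psi_{\al}(y)\neq 1$, and $\nu(y)\ge -\nu(\al) > \g$ forces $f(x+y) = f(x)$ for all $x$. Hence $(f\psi_{\al})(x+y) = \psi_{\al}(y)\,f(x)\psi_{\al}(x)$, and applying $I$ to both sides yields $I(f\psi_{\al}) = \psi_{\al}(y)\,I(f\psi_{\al})$, forcing $I(f\psi_{\al}) = 0$. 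So $I = \widetilde{\int}^F$, completing the proof.
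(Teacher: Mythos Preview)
Your proof is correct and follows essentially the same approach as the paper's: both split each $f\psi_{\al}$ according to whether $\g\ge -\nu(\al)$ (landing in $\calL(F)$ by lemma~\ref{lemma_int_of_characters_1}) or $\g<-\nu(\al)$ (your $\calN$), define the extension as $\int^F$ of the first piece, and use lemma~\ref{lemma_int_of_characters_2} to show well-definedness. Your packaging via the subspace $\calN$ and the decomposition $\calL(F,\psi)=\calL(F)+\calN$ is slightly cleaner than the paper's direct manipulation of representations, but the underlying argument is the same.
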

\begin{proof}
To prove uniqueness, suppose that $I$ is a translation-invariant $\CG$-linear functional on $\calL(F,\psi)$ which vanishes on $\calL(F)$. We claim that $I$ is everywhere zero; by linearity it suffices to check that $I$ vanishes on $f\psi_{\al}$ for $f\in\calL^{\g}$ (any $\g\in\Gamma$) and $\al\in F$. If $\g>-\nu(a)$, then $f\psi_{\al}$ is integrable by lemma \ref{lemma_int_of_characters_1} and so $I(f\psi_{\al})=0$. If $\g\le-\nu(\al)$, then let $y\in t(-\nu(\al))\roi{F}$ satisfy $\psi_{\al}(y)\neq 1$; as in lemma \ref{lemma_int_of_characters_2} the identity $I(f\psi_{\al})=\psi_{\al}(y)I(f\psi_{\al})$ follows from translation invariance of $I$. This completes the proof of uniqueness.

To prove existence, suppose first that $f\in\calL(F,\psi)$ is complex-valued, and write $f=\sum_i f_i\psi_{\al_i}$, for finitely many $\al_i\in F$, and $f_i\in\calL^{\g_i}$ say. Attempt to define \[I(f)=\sum_{i\,:\,\g_i\ge-\nu(\al_i)}\int^F f_i(x)\psi_{\al_i}(x)\,dx.\] We claim that this is well-defined. Indeed, if $f=0$, then function \[\sum_{i\,:\,\g<-\nu(\al_i)}f_i\psi_{\al_i}=-\sum_{i\,:\,\g\ge-\nu(\al_i)}f_i\psi_{\al_i}\] lies in $\calL^F$ by lemma \ref{lemma_int_of_characters_1}. By lemma \ref{lemma_int_of_characters_2}, the function has integral equal to zero, and so
\[0=\int^F\sum_{i\,:\,\g\ge-\nu(\al_i)}f_i(x)\psi_{\al_i}(x)\,dx=\sum_{i\,:\,\g\ge-\nu(\al_i)}\int^F f_i(x)\psi_{\al_i}(x)\,dx.\]
This proves that $I$ is well-defined.

$I$ extends to $\calL(F,\psi)$ by setting $I(\sum_j g_j\,X^{\g_j})=\sum_jI(g_j)\,X^{\g_j}$ for finitely many complex-valued $g_j$ in $\calL(F)$ and $\g_j$ in $\Gamma$. Translation invariance of $I$ follows from translation invariance of $\int^F$. 
\end{proof}

We shall denote the extension of $\int^F$ to $\calL(F,\psi)$ by $\int^F$.

\begin{remark}
The previous results maybe easily modified to prove that there is a unique extension of $\int^F$ to a translation-invariant $\CG$-linear function on the space spanned by $f\Psi$ for $f\in\calL(F)$ and $\Psi$ \emph{any} good character.
\end{remark}

\begin{example}
Compare with [AoAS] section 7, example. Suppose that $\Res{F}$ is non-archimedean, with prime $\pi$ and residue field of cardinality $q$. Let $w=(\nu_{\Res{F}}\circ\eta,\nu)$ be the valuation on $F$ with value group $\mathbb{Z}\times\Gamma$ (ordered lexicographically from the right), with respect to which $F$ has residue field $\mathbb{F}_q$. Let $a\in F$, $\g\in\Gamma$, $j\in\mathbb{Z}$; then
\[\int^F\psi_a(x)\Char{t(\g)\rho^{-1}(\pi^j\roi{\Res{F}})}(x)\,dx
    =\begin{cases}
    0   &\quad\g<-\nu(a)\\
    \int_{\pi^j\roi{\Res{F}}}{\Res{\psi}}(\eta(a)y)\,dyX^{\g}
&\quad\g=-\nu(a)\\
    \int^F\Char{t(\g)\rho^{-1}(\pi^j\roi{\Res{F}})}(x)\,dx&\quad\g>-\nu(a)\\
     \end{cases}
\]

Suppose further, for simplicity, that $\Res\psi$ is trivial on $\pi\roi{\Res{F}}$ but not on $\roi{\Res{F}}$, and that the Haar measure on $\res{F}$ has been chosen such that $\roi{\Res{F}}$ has measure $1$; then
\[\int_{\pi^j\roi{\Res{F}}}{\Res{\psi}}(\eta(a)y)\,dy
    =\begin{cases}
    0&\quad j\le-\nu_{\Res{F}}(\eta(a))\\
    q^{-j}\mu(\roi{\Res{F}})&\quad j>-\nu_{\Res{F}}(\eta(a))\\
     \end{cases}\]
Therefore
\[\int^F\psi_a(x)\Char{t(\g)\rho^{-1}(\pi^j\roi{\Res{F}})}(x)
    =\begin{cases}
    0&\quad w(a)<(-j+1,-\g)\\
    q^{-j}X^{\g}&\quad w(a)\ge(-j+1,-\g),\\
     \end{cases}\]

Finally, as $\Char{t(\g)\rho^{-1}(\pi^j\roi{\Res{F}}^{\times})}=
\Char{t(\g)\rho^{-1}(\pi^j\roi{\Res{F}})}
-\Char{t(\g)\rho^{-1}(\pi^{j+1}\roi{\Res{F}})}$, it follows that
\[\int^F\psi_a(x)\Char{t(\g)\rho^{-1}(\pi^j\roi{\Res{F}}^{\times})}(x)\,dx
    =\begin{cases}
    0&w(a)<(-j,-\g)\\
    q^{-j-1}X^{\g}&w(a)=(-j,-\g)\\
    q^{-j}(1-q^{-1})X^{\g}&w(a)>(-j,-\g)
    \end{cases}\]
\end{example}
%

\subsection{Harmonic Analysis}
Now that we can integrate products of functions and characters, we may define a Fourier transform on $F$:

\begin{definition}
Let $f$ be in $\calL(F,\psi)$. The Fourier transform of $f$, denoted $\hat{f}$, is the $\CG$-valued function on $F$ defined by $\hat{f}(x)=\int^F f(y)\psi(xy)\,dy$.
\end{definition}

The Fourier transforms on $F$ and $\res{F}$ are related as follows:
\begin{proposition}\label{prop_transform_of_lifted_function}
Let $g$ be Haar integrable on $F$, and $\g\in\Gamma$, $a,b\in F$; set $f=g^{a,\g}\psi_b$, the product of a lifted function with a good character. Then \[\hat{f}=\psi(ab)\hat{g}^{-b,-\g}\psi_a\,X^{\g}\] where $\hat{g}$ is the Fourier transform of $g$ with respect to $\Res{\psi}$.
\end{proposition}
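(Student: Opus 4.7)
The plan is to expand the definition of $\hat{f}(x)$ and then perform a case analysis based on the valuation of $b+x$ relative to $\gamma$, applying the two lemmas about integrating characters against lifted functions.

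First I would write
\[
\hat{f}(x) = \int^F g^{a,\gamma}(y)\,\psi_b(y)\psi(xy)\,dy = \int^F g^{a,\gamma}(y)\,\psi_{b+x}(y)\,dy,
\]
so the whole problem reduces to understanding $\int^F g^{a,\gamma}\psi_{\alpha}$ as a function of $\alpha := b+x$, then matching each case on the right-hand side. I would then split into three cases according to the comparison of $\gamma$ with $-\nu(\alpha)$.

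In the case $\gamma > -\nu(\alpha)$ (equivalently $\nu(x+b) > -\gamma$), lemma \ref{lemma_int_of_characters_1} tells us that $\psi_{\alpha}$ is constantly $\psi(\alpha a)$ on $J = a + t(\gamma)\roi{F}$, so the integral collapses to $\psi(ab)\psi(ax)\int_{\res F} g(u)\,du\, X^{\gamma}$. On the claimed right-hand side, $x+b \in t(-\gamma)\roi F$ with $\overline{(x+b)t(\gamma)} = 0$, so $\hat g^{-b,-\gamma}(x) = \hat g(0) = \int_{\res F} g(u)\,du$, and the two sides agree. In the case $\gamma = -\nu(\alpha)$, lemma \ref{lemma_int_of_characters_1} says that $\psi_{\alpha}\Char{J}$ is the lift of $\psi(\alpha a)\res\psi_{\eta(\alpha)}$ at $a,\gamma$, so $g^{a,\gamma}\psi_{\alpha}$ is the lift at $a,\gamma$ of $u \mapsto \psi(\alpha a)\,g(u)\res\psi(\eta(\alpha)u)$; applying $\int^F$ gives $\psi(ab)\psi(ax)\hat g(\eta(x+b))\,X^{\gamma}$. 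On the other side, $\nu(x+b) = -\gamma$ forces $\overline{(x+b)t(\gamma)} = \eta(x+b)$, so again the two sides match. Finally, for $\gamma < -\nu(\alpha)$, lemma \ref{lemma_int_of_characters_2} yields $\int^F g^{a,\gamma}\psi_{\alpha} = 0$, while on the right $x+b \notin t(-\gamma)\roi F$, so $\hat g^{-b,-\gamma}(x) = 0$.

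The main bookkeeping point, and the only step requiring any real care, is the middle case: one must recognize that when $\nu(x+b) = -\gamma$ the expression $\overline{(x+b)t(\gamma)}$ appearing in the definition of $\hat g^{-b,-\gamma}$ coincides with $\eta(x+b)$ produced by lemma \ref{lemma_int_of_characters_1}, so that the two occurrences of $\hat g$ (one from the formula, one from applying the residue-field Fourier transform) line up. Once this is in hand, the three cases patch together into the stated identity. By $\Comp$-linearity it suffices to prove the identity for $g \in \calL$, and then by $\CG$-linearity the extension to all $g$ in the integrable class is immediate.
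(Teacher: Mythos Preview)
Your argument is correct and follows essentially the same route as the paper's proof. Two small points of difference: the paper treats your first two cases together by writing $x=-b+t(-\gamma)x_0$ with $x_0\in\roi{F}$ and verifying the identity $g^{a,\gamma}\psi_{t(-\gamma)x_0}=\psi(t(-\gamma)ax_0)\,(g\,\res{\psi}_{\res{x}_0})^{a,\gamma}$ directly on $a+t(\gamma)\roi{F}$, which saves splitting on whether $\res{x}_0=0$; and for the vanishing case $\gamma<-\nu(b+x)$, lemma \ref{lemma_int_of_characters_2} does not literally apply since its hypothesis asks that $g^{a,\gamma}\psi_{b+x}$ lie in $\calL(F)$, which it generally does not---the vanishing is instead immediate from the construction of the extended integral in proposition \ref{prop_extn_of_integral_to_characters} (or, equivalently, from re-running the translation-invariance argument of that lemma inside $\calL(F,\psi)$).
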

\begin{proof}
By definition of the Fourier transform, $x\in F$ implies \[\hat{f}(x)=\int^F g^{a,\g}(y)\psi((b+x)y)\,dy. \tag{$\ast$}\] This is zero if $\g<-\nu(b+x)$ ie. if $x\notin\Coset{-b}{-\g}$. Conversely, suppose that $x=-b+t(-\g)x_0$, where $x_0\in\roi{F}$; then the integrand in ($\ast$) is \[g^{a,\g}\psi_{t(-\g)x_0}=\psi(t(-\g)ax_0)g^{a,\g}\res{\psi}_{\res{x}_0}^{a,\g},\] an identity which is easy checked by evaluating on $\Coset{a}{\g}$. So
\begin{align*}
\hat{f}(x)
	&=\psi(t(-\g)ay)\int^F g^{a,\g}(y)\res{\psi}_{\res{x}_0}^{a,\g}(y)\,dy\\
	&=\psi(t(-\g)ay)\hat{g}(\res{x}_0)\,X^{\g}\\
	&=\psi(a(x+b))\hat{g}(\res{x}_0)\,X^{\g},\end{align*}
which completes the proof.
\end{proof}

Let $\mathcal{S}(F,\psi)$ denote the subspace of $\calL(F,\psi)$ spanned over $\CG$ by functions of the form $g^{a,\g}\psi_b$, for $g$ a Schwartz-Bruhat function on $\res{F}$, $\g\in\Gamma$, $a,b\in F$. Recall that the Schwartz-Bruhat space on $\res{F}$ is invariant under the Fourier transform and that there exists a positive real $\lambda$ such that for any Schwartz-Bruhat function $g$, Fourier inversion holds: $\hat{\hat{g}}(x)=\lambda g(-x)$ for all $x\in\res{F}$. The following proposition extends these results to $F$: 

\begin{proposition}\label{prop_double_transform}
The space $\mathcal{S}(F,\psi)$ is invariant under the Fourier transform. For $f$ in $\calS(F,\psi)$, a double transform formula holds: $\hat{\hat{f}}(x)=\lambda f(-x)$ for all $x\in F$.
\end{proposition}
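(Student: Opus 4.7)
The plan is to reduce everything to the basic building blocks $g^{a,\gamma}\psi_b$ (with $g$ Schwartz-Bruhat on $\res{F}$) and then apply proposition \ref{prop_transform_of_lifted_function} twice. Since $\calS(F,\psi)$ is spanned over $\CG$ by such functions and the Fourier transform is $\CG$-linear, it suffices to establish both claims on a single generator.

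For invariance, I would invoke proposition \ref{prop_transform_of_lifted_function} once: taking $f=g^{a,\gamma}\psi_b$ gives
\[\hat f=\psi(ab)\hat g^{-b,-\gamma}\psi_a\,X^{\gamma}.\]
Since the Schwartz-Bruhat space on $\res{F}$ is closed under the Fourier transform (with respect to $\res\psi$), the function $\hat g$ is again Schwartz-Bruhat, and so $\hat f$ is a $\CG$-scalar multiple of a generator of $\calS(F,\psi)$. Hence $\calS(F,\psi)$ is invariant.

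For the double transform, I would apply proposition \ref{prop_transform_of_lifted_function} a second time with the roles of the parameters shifted: setting $h=\hat g$, $a'=-b$, $\gamma'=-\gamma$, $b'=a$, the proposition gives
\[\widehat{h^{-b,-\gamma}\psi_a}=\psi(-ab)\,\hat h^{-a,\gamma}\psi_{-b}\,X^{-\gamma}.\]
Multiplying by the constants picked up from the first transform, the factors $\psi(ab)\psi(-ab)=1$ and $X^{\gamma}X^{-\gamma}=1$ both cancel, yielding $\hat{\hat f}=\hat h^{-a,\gamma}\psi_{-b}$. Now Fourier inversion on $\res{F}$ reads $\hat h(u)=\hat{\hat g}(u)=\lambda g(-u)$, so unwinding the definition of the lift at $-a,\gamma$ gives $\hat h^{-a,\gamma}(x)=\lambda g(-\res{(x+a)t(-\gamma)})=\lambda g^{a,\gamma}(-x)$, and a short computation shows that $\psi_{-b}(x)=\psi_b(-x)$. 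Combining these identifies $\hat{\hat f}(x)$ with $\lambda f(-x)$ as required.

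The proof is essentially a formal manipulation once proposition \ref{prop_transform_of_lifted_function} is in hand; the only subtle point (and the main thing to be careful about) is the bookkeeping of the lift parameters under the second application of the proposition, and verifying that the lifted Fourier inversion identity $\hat h^{-a,\gamma}(x)=\lambda g^{a,\gamma}(-x)$ is valid both inside the support $-a+t(\gamma)\roi F$ and trivially outside it. Once that is checked, the phases and $\CG$-scalars cancel cleanly.
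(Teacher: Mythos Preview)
Your proposal is correct and follows essentially the same approach as the paper: reduce by $\CG$-linearity to a generator $g^{a,\gamma}\psi_b$, apply proposition \ref{prop_transform_of_lifted_function} twice, observe the cancellation of $\psi(ab)\psi(-ab)$ and $X^{\gamma}X^{-\gamma}$, and finish with Fourier inversion on $\res{F}$. Your write-up is in fact slightly more detailed than the paper's, which simply states $\hat{\hat f}=(\hat{\hat g})^{-a,\gamma}\psi_{-b}$ and then says ``apply the inversion formula for $g$ to complete the proof'' without unwinding the lift as you do.
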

\begin{proof}
By linearity it suffices to consider the case $f=g^{a,\g}\psi_b$, for $\g\in\Gamma$, $a,b\in F$, and $g$ a Schwartz-Bruhat function on $\res{F}$. Then $\hat{f}=\psi(ab)\hat{g}^{-b,-\g}\psi_aX^{\g}$ belongs to $\mathcal{S}(F,\psi)$ and so \[\hat{\hat{f}}=\psi(ab)\,\psi(-ba)\hat{\hat{g}}^{-a,\g}\psi_{-b}X^{-\g}\,X^{g} =(\hat{\hat{g}})^{-a,\g}\psi_{-b},\] by proposition \ref{prop_transform_of_lifted_function}. Apply the inversion formula for $g$ to complete the proof.
\end{proof}

\begin{remark}
Let us consider the dependence of theory on the choice of character $\psi$; let $\psi'$ be another good character of $F$. In the interesting case of a higher local field, self-duality suggests that we may restrict attention to the case $\psi'=\psi_{\al}$ for some $\al\in\mult{F}$; so we assume henceforth that $\psi'=\psi_{\al}$. Then $\calL(F,\psi)=\calL(F,\psi')$, where $\calL(F,\psi')$ is defined in the same way as $\calL(F,\psi)$ but replacing $\psi$ by $\psi'$; further, the uniqueness of the extension of $\int^F$ given by proposition \ref{prop_extn_of_integral_to_characters} shows that the this extension does not depend on $\psi$.

Let $\frakf$ be the conductor of $\psi'$, and $\res{\psi'}$ the induced character of $\res{F}$; thus $\res{\psi'}(\res{x})=\psi'(t(\frakf-1)x)$ for $x\in\roi{F}$. By lemma \ref{lemma_translation_of_good_characters}, $\res{\psi'}=\res{\psi}_{\eta(\al)}$, and $\frakf=1-\nu(\al)$.

Let $g$ be Haar integrable on $F$, and $\g\in\Gamma$, $a,b\in F$; set $f=g^{a,\g}\psi_b$. Let $\check{f}$ denote the Fourier transform of $f$ with respect to $\psi'$; then for $y\in F$,
\begin{align*}\check{f}(y)
	&=\int^F f(x)\psi'(yx)\,dx\\
	&=\widehat{g^{a,\g}\psi_{\al b}}(\al y)\\
	&=\psi(\al a b)\hat{g}^{-\al b,-\g}(\al y)\psi_a(\al y)\,X^{\g}, \end{align*}
by proposition \ref{prop_transform_of_lifted_function}. Further, $y\mapsto\hat{g}^{-\al b,-\g}(\al y)$ is the lift of $v\mapsto\hat{g}(\eta(\al)v)$ at $-b,-\g-\nu(\al)$, an identity easily proved (or see the proof of lemma \ref{lemma_abs_value}). Also, $\hat{g}(\eta(\al)v)=\check{g}(v)$, where $\check{g}$ is the Fourier transform of $g$ with respect to $\res{\psi}'$, and so the analogue of proposition \ref{prop_transform_of_lifted_function} follows: \[\check{f}=\psi'(ab)\check{g}^{-b,-\g-\nu(\al)}\psi_a'\,X^{\g}.\]

For $f$ in $\calS(F,\psi')=\calS(F,\psi)$, the analogue of proposition \ref{prop_double_transform} now follows: $\check{\check{f}}=\check{\check{g}}^{-a,\g}\psi'_{-b}\,X^{-\nu(\al)}$. That is, \[\check{\check{f}}(x)=\lambda' f(-x)\,X^{\frakf-1}\] for all $x\in F$, where $\lambda'$ is the double transform constant associated to $\res{\psi'}$ (see the paragraph preceding proposition \ref{prop_double_transform}).

\end{remark}
%
%
%

\section{Integration on $\mult{F}$}

In this section, we consider integration over the multiplicative group $\mult{F}$. By analogy with the case of a local field, we are interested in those functions $\phi$ of $\mult{F}$ for which $x\mapsto \phi(x)\abs{x}^{-1}$ is integrable on $F$, where $\abs{\cdot}$ is a certain modulus defined below.

\subsection{Integration on $\mult{F}$}
Let $\abs{\cdot}=\abs{\cdot}_{\res{F}}$ denote the absolute value on $\res{F}$ normalised by the condition $\int g(\al x)\,dx=\abs{\al}^{-1}\int g(x)\,dx$ for $g\in\calL$, $\al\in\mult{F}$. First we lift this absolute value to $F$:

\begin{lemma}\label{lemma_abs_value}
Let $f$ be a $\CG$-valued integrable function on $F$ and $\alpha\in F^{\times}$. Then the scaled function $x\mapsto f(\alpha x)$ also belongs to $\calL(F)$, and \[\int^F f(\al x)\,dx=\abs{\eta(\al)}^{-1}X^{-\nu(\al)}\int^Ff(x)\,dx.\]
\end{lemma}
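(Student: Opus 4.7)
The plan is to reduce to the generating case $f = g^{a,\gamma}$ by $\CG$-linearity of both sides, and then directly recognise $x \mapsto g^{a,\gamma}(\alpha x)$ as a lifted function on $F$ whose height and residue-level data can be read off from $\nu(\alpha)$ and $\eta(\alpha)$. The result then follows at once from the scaling property of the fixed Haar measure on $\res{F}$.

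First I would verify the support of the scaled function. The function $x \mapsto g^{a,\gamma}(\alpha x)$ is nonzero exactly when $\alpha x \in a + t(\gamma)\roi{F}$, i.e.\ when $x$ lies in $\alpha^{-1} a + \alpha^{-1} t(\gamma)\roi{F}$. Writing $\alpha^{-1} = t(-\nu(\alpha)) \cdot u$ with $u \in \mult{\roi{F}}$, the set $\alpha^{-1}t(\gamma)\roi{F}$ equals $t(\gamma - \nu(\alpha))\roi{F}$, so the support is the translated fractional ideal $\alpha^{-1} a + t(\gamma - \nu(\alpha))\roi{F}$.

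Next I would identify the scaled function as a lift. For $x = \alpha^{-1} a + t(\gamma - \nu(\alpha)) y$ with $y \in \roi{F}$, a direct computation gives $(\alpha x - a) t(-\gamma) = \alpha\, t(-\nu(\alpha))\, y$, and applying $\rho$ together with the definition $\eta(\alpha) = \res{\alpha\, t(-\nu(\alpha))}$ yields $\res{(\alpha x - a) t(-\gamma)} = \eta(\alpha)\res{y}$. Therefore $x \mapsto g^{a,\gamma}(\alpha x)$ is the lift $h^{\alpha^{-1} a,\, \gamma - \nu(\alpha)}$ where $h(z) = g(\eta(\alpha) z)$. Since $g \in \calL$ and $\eta(\alpha) \in \mult{\res{F}}$, the function $h$ is again Haar integrable on $\res{F}$, so $x \mapsto f(\alpha x)$ lies in $\calL(F)$.

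Finally, applying the definition of $\int^F$ gives
\[
\int^F f(\alpha x)\,dx = \Bigl(\int_{\res{F}} g(\eta(\alpha) u)\,du\Bigr) X^{\gamma - \nu(\alpha)},
\]
and the normalisation $\int g(\eta(\alpha) u)\,du = \abs{\eta(\alpha)}^{-1} \int g(u)\,du$ reorganises this as $\abs{\eta(\alpha)}^{-1} X^{-\nu(\alpha)} \int^F f(x)\,dx$. There is no real obstacle; the only thing to get right is tracking how the pair $(a,\gamma)$ transforms to $(\alpha^{-1} a,\, \gamma - \nu(\alpha))$ and how the residue picks up the factor $\eta(\alpha)$, which is precisely where the shape of the claimed formula comes from.
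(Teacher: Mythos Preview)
Your proof is correct and follows essentially the same approach as the paper: reduce by linearity to $f=g^{a,\gamma}$, identify $x\mapsto g^{a,\gamma}(\alpha x)$ as the lift of $y\mapsto g(\eta(\alpha)y)$ at $\alpha^{-1}a,\;\gamma-\nu(\alpha)$, and then apply the scaling property of Haar measure on $\res{F}$. Your version is slightly more explicit about the support computation, but the argument is the same.
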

\begin{proof}
By linearity we may assume that $f$ is the lift of a function from $\calL$; $f=g^{a,\g}$ say. Then for all $x\in \al^{-1}(\Coset
a{\g})$,
\[
    f(\al x)=g(\Res{(\al x-a)t(-\g)})\\
        =g(\Res{\eta(\al)}\;\Res{(x-\al^{-1}a)t(\nu(\al)-\g)})\]
So the function $x\mapsto f(\al x)$ is the lift of the function $y\mapsto g(\Res{\eta(\al)}y)$ at $\al^{-1}a,\g-\nu(\al)$. This has integral
\begin{align*}\int_{\res{F}}g(\eta(\al)y)\,dy\;X^{\g-\nu(\al)}
    &=\abs{\eta(\al)}^{-1}\int_{\res{F}}g(y)\,dy\,X^{\g}X^{-\nu(\al)}\\
    &=\abs{\eta(\al)}^{-1}X^{-\nu(\al)}\int^F f(x)\,dx,\end{align*}
as required.
\end{proof}

\begin{remark}
Lemma \ref{lemma_abs_value} remains valid if $\calL(F)$ is replaced by $\calL(F,\psi)$.
\end{remark}

The lemma and remark suggest the follows definition:

\begin{definition}
Let $\al$ be in $\mult{F}$; the \emph{absolute value} of $\al$ is $\abs{\al}=\abs{\eta(\al)}X^{\nu(\al)}$.

Let $\calL(\mult{F},\psi)$ be the set of $\CG$-valued functions $\phi$ on $\mult{F}$ for which $x\mapsto \phi(x)\abs{x}^{-1}$, a function of $\mult{F}$, may be extended to $F$ to give a function in $\calL(F,\psi)$. The integral of such a function over $\mult{F}$ is defined to be \[\int^{\mult{F}}\phi(x)\,\dmult{x}=\int^F \phi(x)\abs{x}^{-1}\,dx,\] where the integrand on the right is really the extension of the function to $F$.
\end{definition}

\begin{remark}
There is no ambiguity in the definition of the integral over $\mult{F}$, for $x\mapsto\phi(x)\abs{x}^{-1}$ can have at most one extension to $\calL(F,\psi)$. This follows from the fact that $\calL(F,\psi)$ does not contain $\Char{\{0\}}$.
\end{remark}

$\calL(\mult{F},\psi)$ is a $\CG$-space of $\CG$-valued functions, and $\int^{\mult{F}}$ is a $\CG$-linear functional. Moreover, the integral is invariant under multiplication in the following sense:

\begin{proposition}
If $\phi$ belongs to $\calL(\mult{F},\psi)$ and $\al$ is in $\mult{F}$, then $x\mapsto\phi(\al x)$ belongs to $\calL(\mult{F},\phi)$ and $\int^{\mult{F}}\phi(\al x)\,d\overset{\mbox{\tiny$\times$}}{x}= \int^{\mult{F}}\phi(x)\,d\overset{\mbox{\tiny$\times$}}{x}$.
\end{proposition}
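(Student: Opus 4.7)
The plan is to reduce everything directly to Lemma \ref{lemma_abs_value} (in its extension to $\calL(F,\psi)$ noted in the remark immediately following that lemma), together with the fact that $\calL(F,\psi)$ is a $\CG$-module. Given $\phi\in\calL(\mult F,\psi)$, let $f\in\calL(F,\psi)$ be the unique extension of the function $x\mapsto\phi(x)\abs{x}^{-1}$ on $\mult F$ to all of $F$ (uniqueness was noted in the remark after the definition of $\calL(\mult F,\psi)$). I would then define $g(x)=f(\al x)$; by Lemma \ref{lemma_abs_value} (extended version) we have $g\in\calL(F,\psi)$, and since $\abs{\al}=\abs{\eta(\al)}X^{\nu(\al)}$ is an invertible element of $\CG$, the function $\abs{\al}\,g$ also lies in $\calL(F,\psi)$.

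Next I would check that $\abs{\al}\,g$ is an extension of $x\mapsto\phi(\al x)\abs{x}^{-1}$. Because $\nu$ and $\eta$ are homomorphisms, $\abs{\cdot}$ is multiplicative on $\mult F$, so for $x\in\mult F$,
$$\abs{\al}g(x)=\abs{\al}f(\al x)=\abs{\al}\,\phi(\al x)\abs{\al x}^{-1}=\phi(\al x)\abs{x}^{-1}.$$
This shows $x\mapsto\phi(\al x)\in\calL(\mult F,\psi)$. Finally, invoking the definition of $\int^{\mult F}$, the $\CG$-linearity of $\int^F$, and Lemma \ref{lemma_abs_value},
$$\int^{\mult F}\phi(\al x)\,\dmult{x}=\int^F \abs{\al}g(x)\,dx=\abs{\al}\int^F f(\al x)\,dx=\abs{\al}\cdot\abs{\al}^{-1}\int^F f(x)\,dx=\int^{\mult F}\phi(x)\,\dmult{x},$$
where the evaluation $\int^F f(\al x)\,dx=\abs{\eta(\al)}^{-1}X^{-\nu(\al)}\int^F f(x)\,dx=\abs{\al}^{-1}\int^F f(x)\,dx$ is precisely Lemma \ref{lemma_abs_value}.

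There is no genuine obstacle: the whole argument is bookkeeping built on Lemma \ref{lemma_abs_value}, and the only small point worth isolating is the multiplicativity of $\abs{\cdot}$ on $\mult F$, which ensures that the factor $\abs{\al}$ produced by rescaling $f$ exactly cancels the factor $\abs{\al}^{-1}$ produced by the change of variable. The choice of normalisation $\abs{\al}=\abs{\eta(\al)}X^{\nu(\al)}$ is precisely what makes this cancellation work.
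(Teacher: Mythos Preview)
Your argument is correct and is essentially identical to the paper's own proof: both take the extension $f\in\calL(F,\psi)$ of $\phi(x)\abs{x}^{-1}$, observe that $\abs{\al}f(\al\,\cdot)$ extends $\phi(\al x)\abs{x}^{-1}$, and then apply Lemma~\ref{lemma_abs_value} (in its $\calL(F,\psi)$ version) to cancel the $\abs{\al}$ against the $\abs{\al}^{-1}$ from the change of variable. Your version is slightly more explicit about the multiplicativity of $\abs{\cdot}$ and the invertibility of $\abs{\al}$ in $\CG$, but there is no substantive difference.
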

\begin{proof}
Let $x\mapsto \phi(x)\abs{x}^{-1}$ be the restriction to $\mult{F}$ of $f\in\calL(F,\psi)$, say. Then $x\mapsto \phi(\al x)\abs{x}^{-1}=\abs{\al} \phi(\al x)\abs{\al x}^{-1}$ is the restriction to $\mult{F}$ of $x\mapsto \abs{\al}f(\al x)$, which belongs to $\calL(F,\psi)$ by lemma \ref{lemma_abs_value}. By the same lemma,
\begin{align*}\int^{\mult{F}}\phi(\al x)\,\dmult{x}
	&=\int^F\abs{\al}f(\al x)\abs{x}^{-1}\,dx\\
	&=\abs{\al}\abs{\al}^{-1}\int^F f(x)\,dx\\
	&=\int^{\mult{F}}\phi(x)\,\dmult{x},\end{align*}
as required.
\end{proof}

\begin{example}\label{example_integral_over_mult_F}
Let us evaluate a couple of integrals on $\mult{F}$.
\begin{enumerate}
\item Let $g$ be Haar integrable on $\res{F}$, $a\in F$, $\g\in\Gamma$, and assume $0\notin a+t(\g)\roi{F}$. Let $\phi$ be the restriction of $g^{a,\g}$ to $\mult{F}$. Then $\phi\in\calL(\mult{F},\psi)$, and \[\int^{\mult{F}} \phi(x)\,\dmult{x}=\abs{a}^{-1}\int^F g^{a,\g}(x)\,dx.\]
Indeed, $x\in a+t(\g)\roi{F}$ implies $\eta(x)=\eta(a)$, and so $x\mapsto\phi(x)\abs{x}^{-1}$ is the restriction of $\abs{a}^{-1} g^{a,\g}$ to $\mult{F}$.

\item Let $g$ be Haar integrable on $\mult{\res{F}}$, and let $\phi$ be the function on $\mult{F}$ which vanishes off $\mult{\roi{F}}$ and satisfies $\phi(x)=g(\res{x})$ for $x\in\roi{F}$. Then $\phi\in\calL(\mult{F},\psi)$ and \[\int^{\mult{F}}\phi(x)\,\dmult{x}=\int g(x)\abs{x}^{-1}\,dx.\]

Indeed, let $h$ be the extension of $x\mapsto g(x)\abs{x}^{-1}$ to $F$ defined by $h(0)=0$. Then $h$ is Haar integrable on $F$, and $h^{0,0}\in\calL(F)$ restricts to the function of $\mult{F}$ given by $x\mapsto\phi(x)\abs{x}^{-1}$.
\end{enumerate}
\end{example}
%
%
%
%
%

\section{Local zeta integrals}\label{section_local_zeta_integrals}
In the remainder of the paper we will discuss (generalisations of) local zeta integrals. We now state the main results of local zeta integrals for the local field $\res{F}$; see \cite[chapter I.2]{Moreno}. Let $g$ be a Schwartz-Bruhat function on $\res{F}$, $\w$ a quasi-character of $\mult{\res{F}}$, and $s$ complex. The associated local zeta integral on $\res{F}$ is \[\zeta_{\res{F}}(g,\w,s)=\int_{\mult{\res{F}}} g(x)\w(x)\abs{x}^s\,\dmult{x};\] this is well-defined (ie. the integrand is integrable) for $\realpart{s}$ sufficiently large. Associated to the character $\w$ there is a meromorphic function $L(\w,s)$, the local L-function, with the following properties:
\begin{enumerate}
\item[(AC)] Analytic continuation, with the poles 'bounded' by the L-function: for all Schwartz-Bruhat functions $g$, $\zeta_{\res{F}}(g,\w,s)/L(\w,s)$, which initially only defines a holomorphic function for $\realpart{s}$ sufficiently large, in fact has analytic continuation to an entire function \[Z_{\res{F}}(g,\w,s)\] of $s$.
\item[(L)] 'Minimality' of the L-function: there is a Schwartz-Bruhat function $g$ for which \[Z_{\res{F}}(g,\w,s)=1\] for all $s$.
\item[(FE)] Functional equation: there is an entire function $\ep(\w,s)$, such that for all Schwartz-Bruhat functions $g$, \[Z_{\res{F}}(\hat{g},\w^{-1},1-s)=\varepsilon(\w,s)Z_{\res{F}}(g,\w,s).\] Moreover, $\ep(\chi,s)$ is of exponential type; that is $\ep(\chi,s)=aq^{bs}$ for some complex $a$ and integer $b$.
\end{enumerate}

Having lifted aspects of additive measure, multiplicative measure, and harmonic analysis from the local field $\res{F}$ up to $F$, we now turn to lifting these results for local zeta integrals. Later, in section \ref{section_two_dim_zeta_integrals}, we will assume that $F$ is a two-dimensional local field and consider a different, more arithmetic, local zeta integral. To avoid confusion between the two we may later refer to those in this section as being $\emph{one-dimensional}$; the terminology is justified by the fact that this section concerns lifting the usual (one-dimensional) zeta integrals on $\res{F}$ up to $F$.

\begin{definition}
For $f$ in $\calS(F,\psi)$, $\w:\mult{\roi{F}}\to\mult{\Comp}$ a homomorphism, and $s$ complex, the associated \emph{(one-dimensional) local zeta integral} is \[\zeta_F^1(f,\w,s)=\int^{\mult{F}} f(x)\w(x)\abs{x}^s\Char{\mult{\roi{F}}}(x)\dmult{x},\]
assuming that the integrand is integrable on $\mult{F}$.
\end{definition}

\begin{remark}
The inclusion of a $\Char{\mult{\roi{F}}}$ is solely for the benefit of the later study of two-dimensional zeta integrals where integrals over $\mult{\roi{F}}$ are more natural.
\end{remark}

We will focus on the situation where $\w$ is trivial on $1+t(1)\roi{F}$; that is, there is a homomorphism $\res{\w}:\mult{\res{F}}\to\mult{\Comp}$ such that $\w(x)=\res{\w}(\res{x})$ for all $x\in\mult{\roi{F}}$. If this induced homomorphism $\res{\w}$ is actually a quasi-character (ie. if it is continuous), then we will say that $\w$ is a \emph{good (multiplicative) character}.

\subsection{Explicit calculations and analytic continuation}
We now perform explicit calculations to obtain formulae for local zeta integrals attached to a good character:
\begin{lemma}
Let $\w$ be a good character of $\mult{\roi{F}}$; let $f=g^{a,\g}\psi_b$ be the product of a lifted function and a character, where $g$ is Schwartz-Bruhat on $\res{F}$, $a,b\in F$, $\g\in\Gamma$. Then we have explicit formulae for the local zeta integrals in the following cases:
\begin{enumerate}
\item Suppose that $\nu(a)<\min(\g,0)$; or that $0<\nu(a)<\g$; or that $0<\g\le\nu(a)$. Then $f(x)\w(x)\abs{x}^s\Char{\mult{\roi{F}}}(x)=0$ for all $x\in F$, $s\in\Comp$.
\item Suppose $0=\nu(a)<\g$. Then $f(x)\w(x)\abs{x}^s\Char{\mult{\roi{F}}}(x)=\w(a)\abs{a}^sf(x)$ for all $x\in F$, $s\in\Comp$; the local zeta integral is well-defined for all $s$ and is given by \[\zeta_F^1(f,\w,s)=\w(a)\abs{a}^{s-1}\int^Ff(x)\,dx.\]
\item Suppose $0=\g\le\nu(a)$. Then the local zeta integral is well-defined for $\realpart{s}$ sufficiently large, and is given by
\[\zeta_F^1(f,\w,s)=\begin{cases}
	\zeta_{\res{F}}(g_1\res{\psi}_{\res{b}},\res{w},s) & \mbox{if } \nu(b)\ge0 \\
	0 & \mbox{if }\nu(b)<0
	\end{cases}\]
where $g_1$ is the Schwartz-Bruhat function on $\res{F}$ given by $g_1(u)=g(u-a)$.
\end{enumerate}
\end{lemma}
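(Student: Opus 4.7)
The plan is a case analysis driven by where the support $a+t(\g)\roi{F}$ of $f$ sits relative to $\roi{F}$ and $\mult{\roi{F}}$, together with a careful identification of $\w(x)$, $\abs{x}$, and $\psi_b(x)$ on this support. For part (i), in each of the three subcases I show that every $x$ in the support of $f$ fails to lie in $\mult{\roi{F}}$, so the factor $\Char{\mult{\roi{F}}}$ kills the integrand. Indeed, if $\nu(a)<\g$ then every $x\in a+t(\g)\roi{F}$ satisfies $\nu(x)=\nu(a)$, while if $\nu(a)\ge\g$ then every such $x$ satisfies $\nu(x)\ge\g$. Combining these with the sign constraints in each subcase forces either $\nu(x)<0$ (so $x\notin\roi{F}$) or $\nu(x)>0$ (so $x$ is a non-unit of $\roi{F}$).

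For part (ii), the hypotheses $\nu(a)=0$ and $\g\ge 1$ force $\res{x}=\res{a}$ and $\nu(x)=0$ for every $x\in a+t(\g)\roi{F}$, whence $x\in\mult{\roi{F}}$, $\w(x)=\w(a)$, and $\abs{x}=\abs{a}$. The integrand therefore collapses to the constant multiple $\w(a)\abs{a}^s f(x)$ of $f$; converting the $\mult{F}$-integral to an $F$-integral via $\dmult{x}=\abs{x}^{-1}dx$ gives $\w(a)\abs{a}^{s-1}\int^F f(x)\,dx$, which is defined for all $s$ since $f\in\calS(F,\psi)\subseteq\calL(F,\psi)$.

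For part (iii), the support of $f$ is contained in $\roi{F}$, and intersecting with $\mult{\roi{F}}$ only discards the residue-$0$ locus, which corresponds under $\rho$ to $\{0\}\subset\res{F}$. On $\mult{\roi{F}}$ the integrand equals \[g(\res{x}-\res{a})\,\psi_b(x)\,\res{\w}(\res{x})\,\abs{\res{x}}^{s-1}\Char{\mult{\res{F}}}(\res{x}).\] By lemma \ref{lemma_translation_of_good_characters}, $\psi_b|_{\roi{F}}$ is trivial when $\nu(b)>0$ and equals $\res{\psi}_{\res{b}}\circ\rho$ when $\nu(b)=0$ (using that $1$ is the minimal positive element of $\Gamma$ to handle $\nu(b)>0$). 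Hence for $\nu(b)\ge 0$ the whole integrand is the lift at $0,0$ of the $\res{F}$-function $g_1\res{\psi}_{\res{b}}\res{\w}\abs{\cdot}^{s-1}\Char{\mult{\res{F}}}$, which lies in $\calL$ once $\realpart{s}$ is sufficiently large; applying $\int^F$ then yields precisely $\zeta_{\res{F}}(g_1\res{\psi}_{\res{b}},\res{\w},s)$ after unfolding $\dmult{u}=\abs{u}^{-1}du$ on $\res{F}$. When $\nu(b)<0$ the same formula presents the integrand as $h^{0,0}\psi_b$ with height $\g=0<-\nu(b)$, so the defining formula in proposition \ref{prop_extn_of_integral_to_characters} discards this term and returns zero.

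The main obstacle is the bookkeeping of part (iii): one must simultaneously identify the multiplicative absolute value, the multiplicative and additive characters, and the Schwartz--Bruhat piece all under the residue map, and then recognise the resulting residue-field integral as a standard one-dimensional local zeta integral on $\res{F}$, with the convergence requirement on $\realpart{s}$ matching the usual one.
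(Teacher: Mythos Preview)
Your proof is correct and follows essentially the same approach as the paper's own argument: a case analysis on the position of the support $a+t(\g)\roi{F}$ relative to $\mult{\roi{F}}$, followed in case (iii) by recognising the integrand as a lift at $0,0$ of a residue-field function times $\psi_b$ and then splitting on the sign of $\nu(b)$. The only cosmetic difference is that the paper treats $\nu(b)>0$ and $\nu(b)=0$ separately, whereas you merge them by observing that $\res{b}=0$ when $\nu(b)>0$ so that $\res{\psi}_{\res{b}}$ is trivial; this is a harmless simplification.
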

\begin{proof}
In any of the cases in (i), $f$ vanishes off $\mult{\roi{F}}$; so $f(x)\Char{\mult{\roi{F}}}(x)=0$ for all $x\in F$.

In case (ii), $a+t(\g)\roi{F}$ is contained in $\mult{\roi{F}}$, and $x\in a+t(\g)\roi{F}$ implies $\w(x)\abs{x}^s=\w(a)\abs{a}^{s}$; this implies that $f(x)\w(x)\abs{x}^s\Char{\mult{\roi{F}}}(x)=f(x)\w(a)\abs{a}^s\Char{\mult{\roi{F}}}(x)$ for all $x\in F$, $s\in\Comp$. Moreover, for all $x\in F$, these results again imply $f(x)\abs{x}^{-1}=f(x)\abs{a}^{-1}$; therefore $f$ is integrable over $\mult{F}$, with $\int^{\mult{F}}f(x)\,\dmult{x}=\int^Ff(x)\,dx.$

Finally we turn to case (iii). First note that $g^{a,\g}\w\abs{\cdot}^{s-1}\Char{\mult{\roi{F}}}$ is the lift of $g_1\res{\w}\abs{\cdot}^{s-1}\Char{\mult{\res{F}}}$ at $0,0$. Now, if $\realpart{s}$ is sufficiently large then the theory of local zeta integrals for $\res{F}$ implies that $g_1\res{\w}\abs{\cdot}^{s-1}\Char{\mult{\res{F}}}$ is integrable on $\res{F}$; thus $f\w\abs{\cdot}^{s-1}\Char{\mult{\roi{F}}}$ is the restriction to $\mult{F}$ of $(g_1\res{\w}\abs{\cdot}^{s-1}\Char{\mult{\res{F}}})^{0,0}\psi_b$, a function which belong to $\calL(F,\psi)$.

By definition of the integral on $\mult{F}$ it follows that (for $\realpart{s}$ sufficiently large) $f\w\abs{\cdot}^{s-1}\Char{\mult{\roi{F}}}$ belongs to $\calL(\mult{F},\psi)$, and
\begin{align*}
\int^{\mult{F}} f(x)\w(x)\abs{x}^s\Char{\mult{\roi{F}}}(x)\,\dmult{x}
	&=\int^F(g_1\res{\w}\abs{\cdot}^{s-1}\Char{\mult{\res{F}}})^{0,0}(x)\psi_b(x)dx\\
	&=\begin{cases}
		\int^F(g_1\res{\w}\abs{\cdot}^{s-1}\Char{\mult{\res{F}}})^{0,0}(x)\,dx & \mbox{if } \nu(b)>0 \\
		\int^F(g_1\res{\w}\abs{\cdot}^{s-1}\Char{\mult{\res{F}}}\res{\psi_b})^{0,0}(x)\,dx & \mbox{if } \nu(b)=0 \\
		0 & \mbox{if }\nu(b)<0
		\end{cases}\\
	&=\begin{cases}
		\int g_1(u-a)\res{\w}(u)\abs{u}^{s-1}\Char{\mult{\res{F}}}(u)\,du & \mbox{if } \nu(b)>0 \\
		\int g_1(u-a)\res{\w}(u)\abs{u}^{s-1}\Char{\mult{\res{F}}}(u)\res{\psi_b}(u)\,du & \mbox{if } \nu(b)=0 \\
		0 & \mbox{if }\nu(b)<0
		\end{cases}\\
	&=\begin{cases}
		\zeta(g_1,\res{w},s) & \mbox{if } \nu(b)>0 \\
		\zeta(g_1\res{\psi}_{\res{b}},\res{w},s) & \mbox{if } \nu(b)=0 \\
		0 & \mbox{if }\nu(b)<0
		\end{cases}
\end{align*}
as required.
\end{proof}

\begin{remark}\label{remark_gaussian_sum_case}
Let $\w$ and $f=g^{a,\g}\psi_b$ be as in the statement of the previous lemma. The lemma treats all possible relations between $\nu(a)$, $\g$, and $0$ with the exception of $\nu(a)\ge\g<0$. There are complications in this case: since $f\Char{\mult{\roi{F}}}=f(0)\Char{\mult{\roi{F}}}$, we wish to calculate \[\zeta_F^1(f,\w,s) =f(0)\int^{\mult{F}}\psi_b(x)\w(x)\abs{x}\Char{\mult{\roi{F}}}(x)\,\dmult{x}.\]

For example, if $\psi_b$ has conductor $1$ then \[\psi_b\w\abs{\cdot}^s\Char{\mult{\roi{F}}}=(\res{\psi_b}\res{w}\abs{\cdot}^s\Char{\mult{\res{F}}})^{0,0}\] and so the zeta integral is formally given by \[\zeta_F^1(f,\w,s)=f(0)\int_{\mult{\res{F}}}\res{\psi_b}(x)\res{\w}(x)\abs{x}^s\,\dmult{x}.\] If $\res{F}$ were finite then this would be a gaussian sum over a finite field, a standard ingredient of local zeta integrals; with $\res{F}$ a local field it is unclear how to interpret this but the following examples provide insight.
\end{remark}

\begin{example}\label{example_PV_non_archimedean}
Suppose $\res{F}$ is non-archimedean and consider the formal integral \[\int_{\mult{\res{F}}}\psi_K(x)\w(x)\,\dmult{x}\] with $\psi_K$ an additive character and $\w$ a multiplicative quasi-character with $\realpart{\w}>0$ (recall that this is defined by $\abs{\w(x)}=\abs{x}^{\mbox{\scriptsize Re}(\w)}$ for all $x$). If $n$ is a sufficiently small integer, then we have a true integral \[\int_{w^{-1}(n)}\psi_K(x)\w(x)\,\dmult{x}=0,\] where $w$ is the discrete valuation of $\res{F}$; so for $n$ sufficiently small the value of the integral \[\int_{\{x:w(x)\ge n\}}\psi_K(x)\w(x)\,\dmult{x}=0\] does not depend on $n$. It seems reasonable to adopt this value as the meaning of the expression $\int_{\mult{\res{F}}}\psi_K(x)\w(x)\,\dmult{x}$.
\end{example}

\begin{example}\label{example_PV_archimedean}
Suppose $\res{F}=\Real$ and we wish to understand the formal integral \[\int_0^{\infty}e^{2\pi ix}\,dx.\] Replacing $2\pi i$ by some complex $\lambda$ with $\realpart{\lambda}<0$ gives a true integral with value \[\int_0^{\infty}e^{\lambda x}\,dx=-1/\lambda.\] Similarly we have \[\int_{-\infty}^0e^{\lambda x}\,dx=1/\lambda\] for $\realpart{\lambda}>0$. This suggests that, formally, \[\int_{\Real} e^{2\pi ix}\,dx=\int_{-\infty}^0e^{2\pi ix}\,dx+\int_0^{\infty}e^{2\pi ix}\,dx=0\] and \[\int_{\Real}e^{2\pi ix}\mbox{sign}(x)\,dx==\int_{-\infty}^0e^{2\pi ix}\,dx-\int_0^{\infty}e^{2\pi ix}\,dx=i/\pi\] where $\mbox{sign}(x)$ is the sign ($\pm$) of $x$.

The first of these integrals is already taken into account by our measure theory: if $F=\Real((t))$ and $\psi$ is the character defined by $\psi(\sum_na_nt^n)=e^{2\pi i a_0}$ (see example \ref{example_character_on_laurent_series}), then $\psi\Char{\roi{F}}$ belongs to $\calL(F,\psi)$ and $\int^F\psi(x)\Char{\roi{F}}(x)\,dx=0$. But $\psi\Char{\roi{F}}$ is also the lift of $x\mapsto e^{2\pi xi}$ at $0,0$ so formally $\int^F\psi(x)\Char{\roi{F}}(x)\,dx=\int_{\Real} e^{2\pi ix}\,dx$.

Such manipulations of integrals are common in quantum field theory (see eg. \cite{Johnson-Lapidus}) and I am grateful to Dr. Jorma Louko for discussions in this subject. That such integrals appear here further suggests a possible relation between this theory and Feynman path integrals. More evidence for such relations maybe found in sections 16 and 18 of \cite{Fesenko-analysis-on-loop-spaces}.
\end{example}

Ignoring the complications caused by this difficult case we may now deduce the first main properties of some local zeta functions. The appendix explains what is meant by a $\CG$-valued holomorphic function.
\begin{proposition}
Let $\w$ be a good character of $\mult{\roi{F}}$, and let $f$ be in $\calS(F,\psi)$; assume that $f$ may be written as a finite sum of terms $f=\sum_i g_i^{a_i,\g_i}\psi_{b_i}\,p_i$ where each $g_i^{a_i,\g_i}\psi_{b_i}$ is treated by one of the cases of the previous lemma and $p_i\in\CG$. Then
\begin{enumerate}
\item For $\realpart{s}$ sufficiently large, the integrand of the local zeta integral $\zeta_F^1(f,\w,s)$ is integrable over $\mult{F}$ and so the local zeta integral is well-defined.
\item $\zeta^F(f,\w,s)/L(\res{\w},s)$ has entire analytic continuation: that is, there is a $\CG$-valued holomorphic function $Z_F^1(f,\w,s)$ on $\Comp$ which equals $\zeta_F^1(f,\w,s)/L(\res{\w},s)$ for $\realpart{s}$ sufficiently large.
\item There is some function $g\in\calS(F,\psi)$ for which $Z_F^1(g,\w,s)=1$ for all complex $s$.
\end{enumerate}
\end{proposition}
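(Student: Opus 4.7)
The plan is to reduce everything to the explicit formulae established in the preceding lemma and then import the known properties (AC) and (L) of the one-dimensional local zeta integrals on $\res{F}$. By linearity, it suffices to handle a single term $g^{a,\g}\psi_b\,p$ with $p\in\CG$, and by the lemma, any such term falls into one of three exhaustive situations: (a) it vanishes identically on $\mult{\roi{F}}$, (b) $0=\nu(a)<\g$, in which case the zeta integrand equals $\w(a)|a|^s g^{a,\g}\psi_b$, or (c) $0=\g\le\nu(a)$, in which case the zeta integral reduces to a one-dimensional local zeta integral on $\res{F}$ (zero when $\nu(b)<0$, and $\zeta_{\res{F}}(g_1\res{\psi}_{\res{b}},\res{\w},s)$ otherwise).

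For part (i), integrability of the integrand for $\realpart{s}$ large follows trivially in case (a), holds for all $s\in\Comp$ in case (b) since the integrand is $\w(a)|a|^{s-1}$ times the integrable function $g^{a,\g}\psi_b$, and in case (c) is precisely the classical integrability statement of $g_1\res{\psi}_{\res{b}}\res{\w}|\cdot|^s\Char{\mult{\res{F}}}$ over $\res{F}$, which holds for $\realpart{s}$ sufficiently large.

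For part (ii), I divide the contribution from each term by $L(\res{\w},s)$ and observe that $1/L(\res{\w},s)$ is entire. Case (a) contributes $0$. Case (b) contributes $\w(a)|a|^{s-1}\int^F g^{a,\g}\psi_b\,dx\,p/L(\res{\w},s)$; since $\nu(a)=0$ we have $|a|=|\eta(a)|\in\Real_{>0}$, so $|a|^{s-1}$ is a classical entire function of $s$, and the whole expression is an entire $\CG$-valued function (cf.\ the appendix for the notion of a $\CG$-valued holomorphic function, using that only a fixed element of $\CG$ multiplies a scalar-valued entire function). Case (c) contributes $Z_{\res{F}}(g_1\res{\psi}_{\res{b}},\res{\w},s)\,p$ (or $0$), which is entire by property (AC) on $\res{F}$. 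Summing these entire functions produces the required $\CG$-valued entire function $Z_F^1(f,\w,s)$ extending $\zeta_F^1(f,\w,s)/L(\res{\w},s)$.

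For part (iii), property (L) on $\res{F}$ furnishes a Schwartz-Bruhat function $g_0$ on $\res{F}$ with $Z_{\res{F}}(g_0,\res{\w},s)=1$. Setting $g=g_0^{0,0}\in\calS(F,\psi)$, we are in case (c) of the lemma with $a=b=0$ and $\g=0$, so $\zeta_F^1(g,\w,s)=\zeta_{\res{F}}(g_0,\res{\w},s)$ for $\realpart{s}$ large, whence $Z_F^1(g,\w,s)=Z_{\res{F}}(g_0,\res{\w},s)=1$ identically. The only non-routine point is verifying in part (ii) that $|a|^{s-1}$ in case (b) genuinely yields an entire complex-valued (and hence $\CG$-valued) function of $s$; this is the main thing to check and is resolved by observing that $\nu(a)=0$ in that case, so no $X^{\g\cdot s}$ ever appears.
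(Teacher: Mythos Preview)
Your proof is correct and follows exactly the approach of the paper, which simply states that the results follow by linearity, the preceding lemma, and properties (AC) and (L) of local zeta integrals on $\res{F}$. You have merely filled in the details the paper leaves implicit, including the observation that in case (b) one has $\nu(a)=0$ so that $|a|\in\Real_{>0}$ and $|a|^{s-1}$ is genuinely a scalar entire function.
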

\begin{proof}
The results follow by linearity, the previous lemma, and the main properties of local zeta integrals on $\res{F}$.
\end{proof}

It is important to extend this result to all $f$ in $\calS(F,\psi)$; therefore the complication discussed in remark \ref{remark_gaussian_sum_case} must be resolved.

\begin{remark} We now briefly consider functional equations. These is no result as satisfactory as for zeta functions of a one-dimensional local field, and there is no reason why there should be due to the $\Char{\mult{\roi{F}}}$ factor. The most interesting issue here is making a functional equation compatible with the difficulties caused by remark \ref{remark_gaussian_sum_case}; this should indicate correctness (or not) of examples \ref{example_PV_non_archimedean} and \ref{example_PV_archimedean}.
\end{remark}
%
%
%

\section{Local functional equations with respect to $s$ goes to $2-s$}\label{section_s_goes_to_2-s}
In this section we continue our study of local zeta functions, considering the problem of modifying the functional equation (FE) on $\res{F}$ so that the symmetry is not $s$ goes to $1-s$, but instead $s$ goes to $2-s$. This is in anticipation of the next section on two-dimensional zeta integrals, where such a functional equation is natural.

Since this section is devoted to the residue field $\res{F}$, we write $K=\res{F}$. We fix an non-trivial additive character $\psi_K$ of $K$ (until proposition \ref{proposition_varying_prime_and_characters} where we consider dependence on this choice). Fourier transforms of complex-valued functions are taken with respect to this character (and the measure which was fixed at the start of the paper): $\hat{g}(y)=\int g(x)\psi_K(xy)\,dx$.

The two main proofs of (FE) are Tate's \cite{Tate's-thesis} using Fubini's theorem, and Weil's \cite{Weil-fonction-zeta} using distributions. For Weil, a fundamental identity in the non-archimedean case is
\[\widehat{g(\al\,\cdot)}=\abs{\al}^{-1}\hat{g}(\al^{-1}\,\cdot)\tag{$\ast$}\]
for $\al\in\mult{K}$, where we write $g(\al\,\cdot)$ for the function $x\mapsto g(\al x)$, notation which we shall continue to use.

The aim of this section is to replace the Fourier transform with a new transform so that ($\ast$) holds with $\abs{\al}^{-2}$ in place of $\abs{\al}^{-1}$. This leads to a modification of the local functional equation, with $\abs{\cdot}^2$ in place of $\abs{\cdot}$; see propositions \ref{proposition_*_endomorphism} and \ref{proposition_archimedean_functional_equation}.
%
%
\subsection{Non-archimedean case}
We assume first that $K$ is a non-archimedean local field. The following proposition precisely explains the importance of the identity $\widehat{g(\al\,\cdot)}=\abs{\al}^{-1}\hat{g}(\al^{-1}\,\cdot)$

\begin{proposition}\label{proposition_*_endomorphism}
Suppose that $g\mapsto g^*$ is a complex linear endomorphism of the Schwartz-Bruhat space $\calS(K)$ of $K$ which satisfies, for some integer $n$, \[g(\al\,\cdot)^*=\abs{\al}^{-n}g^*(\al^{-1}\,\cdot)\] for all $g\in\calS(K)$, $\al\in\mult{K}$. Let $\w$ be a quasi-character of $\mult{K}$. Then there is a unique entire function $\varepsilon_*(\w,s)$ which satisfies \[Z(g^*,\w^{-1},n-s)=\ep_*(\w,s)Z(g,\w,s)\] for all $g\in\calS(K)$, $\al\in\mult{K}$.
\end{proposition}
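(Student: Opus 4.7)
The plan is to exploit the natural $\mult{K}$-equivariance of both sides under the action $g\mapsto g(\al\,\cdot)$ on $\calS(K)$. First I would compute, for $\realpart s$ large and $\al\in\mult{K}$, the substitution $y=\al x$ in the defining integral to obtain $\zeta_K(g(\al\,\cdot),\w,s)=\w(\al)^{-1}\abs{\al}^{-s}\zeta_K(g,\w,s)$, and hence by analytic continuation $Z(g(\al\,\cdot),\w,s)=\w(\al)^{-1}\abs{\al}^{-s}Z(g,\w,s)$. Applying this with $g$ replaced by $g^*$ and $(\w,s)$ by $(\w^{-1},n-s)$, then invoking the hypothesis $g(\al\,\cdot)^{*}=\abs{\al}^{-n}g^{*}(\al^{-1}\,\cdot)$, I would find
\[
Z(g(\al\,\cdot)^{*},\w^{-1},n-s)=\abs{\al}^{-n}Z(g^{*}(\al^{-1}\,\cdot),\w^{-1},n-s)=\w(\al)^{-1}\abs{\al}^{-s}\,Z(g^{*},\w^{-1},n-s).
\]
Thus the two functionals $g\mapsto Z(g,\w,s)$ and $g\mapsto Z(g^{*},\w^{-1},n-s)$ on $\calS(K)$ transform by the same character $\al\mapsto\w(\al)^{-1}\abs{\al}^{-s}$.

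Next I would appeal to the classical uniqueness of quasi-invariant distributions on $\calS(K)$ that transform by a prescribed quasi-character of $\mult{K}$: the space of such distributions is one-dimensional. Since $g\mapsto Z(g,\w,s)$ is not identically zero (by property (L)), there must exist a function $\ep_{*}(\w,s)$ of $s$ alone such that
\[
Z(g^{*},\w^{-1},n-s)=\ep_{*}(\w,s)\,Z(g,\w,s)
\]
for every $g\in\calS(K)$.

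To establish that $\ep_*(\w,s)$ is entire I would invoke (L) to fix $g_0\in\calS(K)$ with $Z(g_0,\w,s)\equiv1$; then $\ep_*(\w,s)=Z(g_0^{*},\w^{-1},n-s)$, which is entire by (AC) applied to $g_0^{*}\in\calS(K)$ — here we crucially use the hypothesis that $*$ is an \emph{endomorphism} of $\calS(K)$. Uniqueness of $\ep_*$ is immediate by evaluating any competing candidate at $g_0$.

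The step I expect to be the main obstacle is the appeal to the uniqueness of equivariant distributions, which is a nontrivial classical fact from Tate--Weil theory; one must be careful that the relevant values of $s$ are not among the exceptional ones where the space of invariant distributions jumps, but this is precisely what the $L$-function normalisation in the definition of $Z$ is designed to handle. If one preferred to avoid this appeal, the alternative would be to reduce to the standard (FE) by expressing $*$ as a composition of the Fourier transform with an operator commuting with all translations $g\mapsto g(\al\,\cdot)$; but working purely from the abstract equivariance hypothesis, the distribution-theoretic route is cleanest.
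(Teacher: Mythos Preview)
Your proposal is correct and follows essentially the same route as the paper: establish the equivariance $Z(g(\al\cdot),\w,s)=\w(\al)^{-1}\abs{\al}^{-s}Z(g,\w,s)$, verify via the hypothesis on $^*$ that $g\mapsto Z(g^*,\w^{-1},n-s)$ satisfies the same equivariance, invoke the one-dimensionality of such functionals, and then use property (L) to pick $g_0$ with $Z(g_0,\w,s)\equiv1$ so that $\ep_*(\w,s)=Z(g_0^*,\w^{-1},n-s)$ is entire. Your treatment is in fact slightly more careful than the paper's in two respects: you explicitly note that applying (AC) to $g_0^*$ requires $^*$ to be an endomorphism of $\calS(K)$, and you flag the exceptional values of $s$ where the uniqueness of equivariant distributions could fail, observing that the $L$-function normalisation in $Z$ is what lets the argument go through there.
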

\begin{proof}
Let $g$ be a Schwartz-Bruhat function on $K$, and $\al\in\mult{K}$. Then for $\realpart{s}$ sufficiently large to ensure integrablility, the identity \[\zeta(g(\al\cdot),\w,s)=\w(\al)^{-1}\abs{\al}^{-s}\zeta(g,\w,s)\] holds. Conversely, for $\realpart{s}$ sufficiently small, the assumed property of $^*$ implies that \[\zeta(g(\al\cdot)^*,\w^{-1},n-s)=\w(\al)^{-1}\abs{\al}^{-s}\zeta(g^*,\w^{-1},n-s).\] Therefore, for all complex $s$, \[Z(g(\al\cdot),\w,s)=\w(\al)^{-1}\abs{\al}^{-s}Z(g,\w,s)\] and \[Z(g(\al\cdot)^*,\w^{-1},n-s)=\w(\al)^{-1}\abs{\al}^{-s}Z(g^*,\w^{-1},n-s).\] So the complex linear functionals $\Lambda$ on $\calS(K)$ given by \[g\mapsto Z(g,\w,s)\] and \[g\mapsto Z(g^*,\w^{-1},n-s)\] (for fixed $s$) each satisfy $\Lambda(g(\al\cdot))=\w(\al)^{-1}\abs{\al}^{-s}\Lambda(g)$ for all $g\in\calS(K)$, $\al\in\mult{K}$. But the space of such functionals is one-dimensional (see eg. \cite[chapter I.2]{Moreno}) (for $\w\neq\abs{\cdot}^{-s}$) and there is $f\in\calS(K)$ such that $Z(f,\w,s)=1$ for all $s$ (property (L) of local zeta integrals); this implies the existence of an entire function $\ep_*(\w,s)$ as required.
\end{proof}

\begin{remark}
Suppose that $^*$ maps $\calS(K)$ \emph{onto} $\calS(K)$. Then there is $g\in\calS(K)$ such that $Z(g^*,\w^{-1},1-s)=1$ for all $s$ and so $\ep_*(\w,s)$ is nowhere vanishing.
\end{remark}

Our aim now is to investigate the epsilon factors attached to a particular transform $^*$ which satisfies $g(\al\,\cdot)^*=\abs{\al}^{-2}g^*(\al^{-1}\,\cdot)$. Let $w:\mult{K}\to\mathbb{Z}$ be the discrete valuation of $K$ and $\pi$ a prime.
\begin{definition}
Define
\begin{align*}
    \nab:&K\to K\\
    &x\mapsto\pi^{w(x)}x
\end{align*}
(and $\nab(0)=0$).

For $g$ be a complex-valued function on $K$, denote by $Wg$ the function
\[Wg(x)=
	\begin{cases}
		g(\pi^{-w(x)/2}x) &\mbox{if $w(x)$ is even}\\
		g(\pi^{(-w(x)-1)/2}x) &\mbox{if $w(x)$ is odd}
	\end{cases}\]
(and $Wg(0)=g(0)$).

Assuming that $Wg$ is integrable on $K$, define the \emph{$^*$-transform} (with respect to $\pi$) of $g$ by \[g^*=\widehat{Wg}\circ\nab.\]
\end{definition}

\begin{remark}
Compare this definition with \cite{Weil-fonction-zeta} and [AoAS] section 15 where Fesenko defines the transform on two copies of a two-dimensional local field $F\times F$.

The $^*$-transform depends on choice of prime $\pi$. We may also denote by $\nab$ the composition operator $\nab(g)=g\circ\nab$.

The space of Schwartz-Bruhat functions $\calS(K)$ is closed under the $^*$-transform.
\end{remark}

It is easy to verify the the $^*$-transform has the desired property:
\begin{lemma} \label{lemma_*_transform_1}
Suppose that $g$ is a Schwartz-Bruhat on $K$ and that $\al\in\mult{K}$. Then \[g(\al\,\cdot)^*=\abs{\al}^{-2}g^*(\al^{-1}\,\cdot).\]
\end{lemma}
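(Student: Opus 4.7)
My plan is to decompose the verification into two scaling properties, one for the operator $W$ and one for the ordinary Fourier transform, and then assemble them using the observation that $\nab$ interacts nicely with multiplication.

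First, I would establish the key intermediate identity
\[W(g(\al\,\cdot)) = Wg(\pi^{\nu(\al)}\al\,\cdot),\]
where by $\nu$ I mean the discrete valuation $w$ on $K$. The verification is a one-line unpacking of the definition of $W$, using the fact that for any $x\in K$ one has $w(\pi^{w(\al)}\al x)=2w(\al)+w(x)$, so the rounding symbol $\lceil\,\cdot\,/2\rceil$ (which expresses the two branches of the definition of $W$ uniformly) splits as $\lceil(2w(\al)+w(x))/2\rceil = w(\al)+\lceil w(x)/2\rceil$; the $\pi^{w(\al)}$ factors then cancel and both sides equal $g(\al\pi^{-\lceil w(x)/2\rceil}x)$.

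Next I would apply the standard Fourier scaling identity for $K$,
\[\widehat{f(\lambda\,\cdot)}(z)=\abs{\lambda}^{-1}\hat{f}(\lambda^{-1}z),\]
with $\lambda=\pi^{w(\al)}\al$. Since $\abs{\pi^{w(\al)}}=\abs{\al}$, we have $\abs{\lambda}=\abs{\al}^{2}$, producing the factor $\abs{\al}^{-2}$ that the lemma predicts.

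Finally I would combine the two ingredients with the composition with $\nab$ built into the definition $g^*=\widehat{Wg}\circ\nab$. Using the trivial identity $\nab(\al^{-1}y)=\pi^{w(\al^{-1}y)}\al^{-1}y=\lambda^{-1}\nab(y)$, the chain of equalities
\[g(\al\,\cdot)^*(y)=\widehat{W(g(\al\,\cdot))}(\nab y)=\widehat{Wg(\lambda\,\cdot)}(\nab y)=\abs{\al}^{-2}\widehat{Wg}(\lambda^{-1}\nab y)=\abs{\al}^{-2}\widehat{Wg}(\nab(\al^{-1}y))=\abs{\al}^{-2}g^*(\al^{-1}y)\]
concludes the proof. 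There is no real obstacle here: the only subtlety is the bookkeeping in Step 1, since $W$ is defined by cases depending on the parity of $w(x)$; however, because $w(\lambda)=2w(\al)$ is even, the case distinction is preserved under the substitution $x\mapsto\lambda x$, which is precisely why the $\pi^{w(\al)}$ correction factor makes the identity clean.
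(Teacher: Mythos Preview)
Your proof is correct and follows essentially the same three-step route as the paper: establish $W(g(\al\,\cdot))=Wg(\pi^{w(\al)}\al\,\cdot)$, apply the Fourier scaling identity with $\lambda=\pi^{w(\al)}\al$ (so $\abs{\lambda}=\abs{\al}^2$), and then use $\nab(\al^{-1}y)=\lambda^{-1}\nab(y)$ to finish. The only difference is that you spell out the parity bookkeeping for the $W$-identity via the uniform expression $Wg(x)=g(\pi^{-\lceil w(x)/2\rceil}x)$, whereas the paper simply asserts the identity.
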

\begin{proof}
If $x\in\mult{F}$, then $W(g(\al\,\cdot))(x)=W(g)(\pi^{w(\al)}\al x)$. Hence \[\widehat{W(g(\al\cdot))}=\abs{\pi^{w(\al)}\al}^{-1}\widehat{Wg}(\pi^{-w(\al)}\al^{-1}\,\cdot).\] Evaluating this at $\nab(x)$ yields
\[g(\al\,\cdot)^*(x)
    =\abs{\al}^{-2}\widehat{Wg}(\pi^{-w(\al)}\al^{-1}\pi^{w(x)}x)
    =\abs{\al}^{-2}g^*(\al^{-1} x).\]
\end{proof}

\begin{remark}
More generally, the previous lemma holds for any complex valued $g$ for which $Wg$ and $W(g(\al\cdot))$ are both integrable.
\end{remark}

We now $^*$-transform several functions. Let $\mu$ be the measure of $\roi{K}$ under our chosen Haar measure and let $d$ be the conductor of $\psi_K$.
\begin{example} \label{eg_*_transforms1}
Suppose $g=\Char{\pi^r\roi{K}}$. Then $Wg=\Char{\pi^{2r}\roi{K}}$, which has Fourier transform $\mu q^{-2r}\Char{\pi^{d-2r}\roi{K}}$. So the $^*$-transform of $g$ is \[g^*=\mu q^{-2r}\Char{\pi^{\lceil{d/2}\rceil-r}\roi{K}},\] where $\lceil{d/2}\rceil$ denotes the least integer not strictly less than $d/2$. Compare this with the Fourier transform \[\hat{g}=\mu q^{-r}\Char{\pi^{d-r}\roi{K}}.\]
\end{example}

\begin{example} \label{eg_*_transforms2}
Suppose $h=\Char{1+\pi^r\roi{K}}$ with $r\ge 1$. Let $x\in\mult{K}$. If $w(x)$ is even, then $Wh(x)=1$ if and only if $x\in 1+\pi^r\roi{K}$; if $w(x)$ is even, then $Wh(x)=1$ if and only if $\pi^{-1}x\in a+\pi^r\roi{K}$. So \[Wh=\Char{1+\pi^r\roi{K}}+\Char{\pi(1+\pi^r\roi{K})},\] whence \[\widehat{Wh}=\mu q^{-r}\Char{\pi^{d-r}\roi{K}}\psi_K+\mu q^{-r-1}\Char{\pi^{d-r-1}\roi{K}}\psi_K(\pi\,\cdot).\]

For the remainder of this example assume $\mu=1$, $d=0$, $r=2$; we shall compute the double $^*$-transform $h^{**}$.

It may be easily checked that if $x\in K$, then
\[\Char{\pi^{-2}\roi{K}}(\nab(x))\psi_K(\nab(x))
	=\begin{cases}
	0			&\mbox{if }x\notin\pi^{-1}\roi{K}\\
	\psi_K(\pi^{-1}x)	&\mbox{if }x\in\pi^{-1}\mult{\roi{K}}\\
	1			&\mbox{if }x\in\roi{K}
	\end{cases}\]
and
\[\Char{\pi^{-3}\roi{K}}(\nab(x))\psi_K(\pi\nab(x))
	=\begin{cases}
	0			&\mbox{if }x\notin\pi^{-1}\roi{K}\\
	\psi_K(x)		&\mbox{if }x\in\pi^{-1}\mult{\roi{K}}\\
	1			&\mbox{if }x\in\roi{K}.
	\end{cases}\]
From the identity for $\widehat{Wh}$ it now follows that \[h^*=q^{-2}(\psi_K(\pi^{-1}\,\cdot)+q^{-1}\psi_K)\,\Char{\pi^{-1}\mult{\roi{K}}}+q^{-2}(1+q^{-1})\Char{\roi{K}}.\] Set $h_1=\psi_K(\pi^{-1}\,\cdot)\,\Char{\pi^{-1}\mult{\roi{K}}}$, $h_2=q^{-1}\psi_K\,\Char{\pi^{-1}\mult{\roi{K}}}$; it may be checked that
\begin{align*} 
	Wh_1&=\psi_K(\pi^{-1}\,\cdot)\,\Char{\pi^{-1}\mult{\roi{K}}}+\psi_K\,\Char{\pi^{-2}\mult{\roi{K}}}\\
	Wh_2&=q^{-1}\psi_K\,\Char{\pi^{-1}\mult{\roi{K}}}+q^{-1}\psi_K(\pi\,\cdot)\,\Char{\pi^{-2}\mult{\roi{K}}}
	\end{align*}
Standard Fourier transform calculations now yield
\begin{align*}
	\widehat{Wh_1}&=q\,\Char{-\pi^{-1}+\pi\roi{K}}-\Char{-\pi^{-1}+\roi{K}}+q^2\,\Char{-1+\pi^2\roi{K}}-q\,\Char{-1+\pi\roi{K}}\\
	\widehat{Wh_2}&=\Char{-1+\pi\roi{K}}-q^{-1}\,\Char{\roi{K}}+q\,\Char{-\pi^{-1}+\pi^2\roi{K}}-\Char{\pi\roi{K}}.
\end{align*}
Further, by example \ref{eg_*_transforms1}, $\widehat{W(\Char{\roi{K}})}=\Char{\roi{K}}$, and so
\begin{align*}
	q^2\widehat{W(h^*)}
		&=q\,\Char{-\pi^{-1}+\pi\roi{K}}-\Char{-\pi^{-1}+\roi{K}}+q^2\,\Char{-1+\pi^2\roi{K}}-q\,\Char{-1+\pi\roi{K}}\\
		&\qquad+\Char{-1+\pi\roi{K}}+q\,\Char{-\pi^{-1}+\pi^2\roi{K}}-\Char{\pi\roi{K}}+\Char{\roi{K}}.
\end{align*}
Now, $x\in\mult{K}$ implies $w(\nab x)$ is even, and so
\begin{align*}
	q^2\widehat{W(h^*)}\circ\nab
		&=q^2\,\Char{-1+\pi^2\roi{K}}\circ\nab-q\,\Char{-1+\pi\roi{K}}\circ\nab\\
		&\qquad+\Char{-1+\pi\roi{K}}\circ\nab-\Char{\pi\roi{K}}\circ\nab+\Char{\roi{K}}\circ\nab\\
		&=q^2\,\Char{-1+\pi^2\roi{K}}-q\,\Char{-1+\pi\roi{K}}\\
		&\qquad+\Char{-1+\pi\roi{K}}-\Char{\pi\roi{K}}+\Char{\roi{K}}.
\end{align*}
That is,
\[h^{**}=q^{-2}\,\Char{\mult{\roi{K}}}-q^{-1}(1-q^{-1})\Char{-1+\pi\roi{K}}+\Char{-1+\pi^2\roi{K}}.\]

Note that although the definition of the $^*$-transform depends on choice of prime $\pi$, the double $^*$-transform $h^{**}$ of $h$ does not. This will be proved in general below.
\end{example}

Let us now obtain explicit formulae for the epsilon factors $\ep_*(\w,s)$.

\begin{example}
We calculate the epsilon factor attached to the $^*$-transform for the trivial character $1$. Suppose for simplicity that $\roi{K}$ has measure $1$ under our chosen Haar measure.

Let $f=\Char{\roi{K}}$. Example \ref{eg_*_transforms1} implies $f^*=\Char{\pi^{\lceil{d/2}\rceil-r}\roi{K}}$; it is a standard calculation that $Z(f,1,s)=1-q^{-1}$ and $Z(f^*,1,2-s)=(1-q^{-1})q^{\lceil{d/2}\rceil(s-2)}$ for all $s$. Therefore \[\ep_*(1,s)=q^{\lceil{d/2}\rceil(s-2)}\] for all $s$.
\end{example}

\begin{example}
We now calculate the epsilon factor attached to the $^*$-transform for ramified quasi-characters. Continue to suppose that that $\roi{K}$ has measure $1$.

Let $\w$ be a quasi-character of $\mult{K}$ of conductor $r>0$; that is, $\w|_{1+\pi^r\roi{K}}=1$ but $\w|_{1+\pi^{r-1}\roi{K}}\not=1$

Let $h=\Char{1+\pi^r\roi{K}}$; so $\zeta(h,\w,s)$ is constantly $m$, the measure of $1+\pi^r\roi{K}$ under $\dmult{x}=\abs{x}^{-1}dx$. The aim is now to calculate $\zeta(h^*,\w^{-1},2-s)$ without calculating $h^{*}$. By example \ref{eg_*_transforms2}, $Wh=h+h(\pi^{-1}\cdot)$, and so $\widehat{Wh}=\hat{h}+q^{-1}\hat{h}(\pi\cdot)$. Therefore
\begin{align*}
\zeta(h^*,\w^{-1},2-s)
	&=\int_{\mult{K}} \hat{h}(\pi^{\w(x)}x)\w(x)^{-1}\abs{x}^{2-s}\dmult{x}\\
		&\qquad\qquad\qquad\qquad+q^{-1}\int_{\mult{K}} \hat{h}(\pi^{\w(x)+1}x)\w(x)^{-1}\abs{x}^{2-s}\dmult{x}\\
	&=\sum_{n\in\mathbb{Z}}q^{n(s-2)}\int_{\w^{-1}(n)}\hat{h}(\pi^{n}x)\w(x)^{-1}\dmult{x}\\
		&\qquad\qquad\qquad\qquad+q^{-1}\sum_{n\in\mathbb{Z}}q^{n(s-2)}\int_{\w^{-1}(n)}\hat{h}(\pi^{n+1}x)\w(x)^{-1}\dmult{x}\\
	&=\sum_n q^{n(s-2)}\w(\pi)^{-n}\int_{\mult{\roi{K}}}\hat{h}(\pi^{2n}x)\w(x)^{-1}\dmult{x}\\
		&\qquad\qquad\qquad\qquad+q^{-1}\sum_nq^{n(s-2)}\w(\pi)^{-n-1}\int_{\mult{\roi{K}}}\hat{h}(\pi^{2n+1}x)\w(x)^{-1}\dmult{x}
\end{align*}

But by Tate's calculation \cite{Tate's-thesis} when calculating the epsilon factor in this same case,
\[\int_{\mult{\roi{K}}}\hat{h}(\pi^{N}x)\w(x)^{-1}\dmult{x}=\begin{cases}q^{-r/2}m\rho_0(\w^{-1}) & \mbox{if }N=d-r\\0&\mbox{otherwise}\end{cases},\] where $\rho_0(\w^{-1})$ is the \emph{root number} of absolute value one \[\rho_0(\w^{-1})=q^{-r/2}\sum_{\theta}\w^{-1}(\theta)\psi_K(\pi^{d-r}\theta),\] the sum being taken over coset representatives of $1+\pi^r\roi{K}$ in $\mult{\roi{K}}$.

Therefore
\begin{align*}
	\zeta(h^*,\w^{-1},2-s)
	&=\begin{cases}
		q^{(d-r)(s-2)/2}\w(\pi)^{(r-d)/2)}q^{-r/2}m\rho_0(\w^{-1})& d-r\mbox{ even} \\ 
		q^{(d-r-1)(s-2)/2-1}\w(\pi)^{(1+r-d)/2)}q^{-r/2}m\rho_0(\w^{-1}) & d-r\mbox{ odd}
		\end{cases}\\
	&=q^{\lceil{(r-d)/2}\rceil(2-s)}\w(\pi)^{\lceil{(r-d)/2}\rceil}q^{-r/2}\delta_{d-r} m\rho_0(\w^{-1})
\end{align*}
where $\delta_{d-r}=1$ if $r-d$ is even and $=q^{-1}$ if $r-d$ is odd. Finally, as we have already observed that $\zeta(h,\w,s)=m$ for all $s$, and $L(\w,s)=1$ for such a character, \[\ep_*(\w,s)=q^{\lceil{(r-d)/2}\rceil(2-s)}\w(\pi)^{\lceil{(r-d)/2}\rceil}q^{-r/2}\delta_{d-r}\rho_0(\w^{-1}).\]
\end{example}

\begin{remark}\label{remark_double_epsilon}
More generally, if $\roi{K}$ has measure $\mu$ under our chosen Haar measure, then each of the epsilon factors above is multiplied by a factor of $\mu$.

Let us now consider what happens when we take the double transform $f^{**}$. If $\w$ is ramified with conductor $r$, then
\begin{align*}
	\ep_*(\w,s)\ep_*(\w^{-1},2-s)
	&=\mu^2q^{2\lceil{(r-d)/2}\rceil}\delta_{d-r}^2q^{-r}\rho_0(\w^{-1})\rho_0(\w)\\
	&=\mu^2q^{2\lceil{(r-d)/2}\rceil}\delta_{d-r}^2q^{-r}\w(-1)\overline{\rho_0(\w)}\rho_0(\w)\\
	&=\mu^2q^{r-d}\delta_{d-r} q^{-r}\w(-1)\\
	&=\mu^2q^{-d}\delta_{d-r}\w(-1).
\end{align*}
If we declare the conductor of an unramified character to be $0$ then this formula remains valid for unramified $\w$.

Therefore two applications of the functional equation imply that for all $f\in\calS(K)$, all characters $\w$ of conductor $r\ge 0$, and all complex $s$, \[\zeta(f^{**},\w,s)=\mu^2q^{-d}\delta_{d-r}\w(-1)\zeta(f,\w,s).\tag{A}\]
\end{remark}

We will now proceed to use our results on epsilon factors to deduce properties of the $^*$-transform; the idea is to use identities between zeta integrals to obtain identities between the functions. The following result is clearly of great importance in this method:

\begin{lemma}\label{lemma_zeta_integrals_determine_fn}
Let $f\in\calS(K)$ and suppose that $\zeta(f,\w,s)=0$ for all quasi-characters $\w$ and complex $s$; then $f=0$.
\end{lemma}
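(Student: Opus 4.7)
The plan is to Mellin-expand the zeta integral using the splitting $\mult{K} \cong \mathbb{Z} \times \mult{\roi{K}}$ given by $x \mapsto (w(x), \pi^{-w(x)} x)$. Given a quasi-character $\omega$ of $\mult{K}$, write $\chi = \omega|_{\mult{\roi{K}}}$ and set $z = q^{-s}\omega(\pi)$. For $\realpart{s}$ large, breaking $\mult{K}$ into the level sets $\pi^n \mult{\roi{K}}$ gives
\begin{equation*}
\zeta(f, \omega, s) = \sum_{n \in \mathbb{Z}} a_n(\chi)\, z^n, \qquad a_n(\chi) := \int_{\mult{\roi{K}}} f(\pi^n u)\, \chi(u)\, \dmult{u}.
\end{equation*}
Because $f \in \calS(K)$ has compact support and is locally constant at $0$, there is an integer $N > 0$ with $f(\pi^n u) = 0$ for $n \le -N$ and $f(\pi^n u) = f(0)$ for all $n \ge N$, uniformly in $u \in \mult{\roi{K}}$. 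By orthogonality of characters on the compact group $\mult{\roi{K}}$, the tail $n \ge N$ contributes $0$ when $\chi \ne 1$ and $f(0)\,\mu(\mult{\roi{K}})\, z^N/(1 - z)$ when $\chi = 1$. Thus the series is a Laurent polynomial in $z$ when $\chi \ne 1$, and a rational function regular on $|z| < 1$ when $\chi = 1$.

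Every character $\chi$ of $\mult{\roi{K}}$ extends to a quasi-character $\omega$ of $\mult{K}$ by prescribing any value in $\mult{\mathbb{C}}$ for $\omega(\pi)$; then $z$ sweeps out an open subset of $\mult{\mathbb{C}}$ as $s$ varies. The hypothesis forces the Laurent series in $z$ to vanish identically. For $\chi \ne 1$ this is a Laurent polynomial, so all $a_n(\chi) = 0$. For $\chi = 1$, clearing the denominator yields the polynomial identity
\begin{equation*}
(1 - z) \sum_{n = -N+1}^{N-1} a_n(1)\, z^n + f(0)\, \mu(\mult{\roi{K}})\, z^N \equiv 0,
\end{equation*}
from which one reads off in succession $a_{-N+1}(1) = 0$, then inductively $a_n(1) = 0$ throughout the range, and finally $f(0)\,\mu(\mult{\roi{K}}) = 0$, so that $f(0) = 0$ as well.

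With $a_n(\chi) = 0$ established for every $n \in \mathbb{Z}$ and every character $\chi$ of the compact abelian group $\mult{\roi{K}}$, Fourier inversion (equivalently, completeness of characters for continuous functions on a compact abelian group) applied to the continuous function $u \mapsto f(\pi^n u)$ forces $f(\pi^n u) = 0$ for all $u$ and all $n$. Hence $f \equiv 0$ on $\mult{K}$, and combined with $f(0) = 0$ we obtain $f \equiv 0$ on $K$. The only real bookkeeping is the geometric-tail analysis for $\chi = 1$, which is what pins down $f(0)$; everything else is elementary harmonic analysis on the compact group $\mult{\roi{K}}$.
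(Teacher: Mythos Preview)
Your proof is correct and follows essentially the same strategy as the paper's: both first isolate $f(0)$ by analysing the pole at $q^{-s}=1$ (the paper does this by subtracting $f(0)\,\Char{\roi{K}}$ and observing that the remainder has zeta integral in $\Comp[q^s,q^{-s}]$), and then invoke Fourier injectivity on $\mult{K}$ to conclude. The only difference is presentational: you unpack the Mellin transform explicitly via the splitting $\mult{K}\cong\mathbb{Z}\times\mult{\roi{K}}$ and apply character completeness on the compact factor $\mult{\roi{K}}$, whereas the paper quotes the general injectivity of the Fourier transform $L^1(\mult{K})\to C(X(\mult{K}))$ directly.
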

\begin{proof}
Let $f$ be in $\calS(K)$. Then $f-f(0)\Char{\roi{K}}$ belongs to $\calS(\mult{K})$ and so the zeta integral $\zeta(f(0)\Char{\roi{K}},\w,s)$ is well-defined for all $s$ and belongs to $\Comp[q^{s},q^{-s}]$. Indeed, it suffices to observe that $\calS(\mult{K})$ is spanned by $\Char{a+\pi^m\roi{K}}$ where $w(a)>m$, and $\zeta(\Char{a+\pi^m\roi{K}},\w,s)=q^{-w(a)s}\int_{a+\pi^m\roi{K}}\w(s)\,\dmult{x}$.

However, for $\w=1$ the trivial character, \[\zeta(f(0)\Char{\roi{K}},1,s)=f(0)m(1-q^{-s})^{-1}\] where $m$ is the multiplicative measure of $\mult{\roi{K}}$. So the assumption that $\zeta(f,1,s)=0$ implies $f(0)(1-q^{-s})^{-1}\in\Comp[q^s,q^{-s}]$ as a function of $s$. This is false unless $f(0)=0$; therefore $f(0)=0$ and $f\in\calS(\mult{F})$.

So now $\zeta(f,\w,1)$ is well-defined for all characters $\w$ of $\mult{F}$ and equals $\tilde{f}(\w)$ where $\tilde{}$ denotes Fourier transform on the group $\mult{K}$ - so $\tilde{f}$ is a function on the dual group of $X(\mult{K})$ of $\mult{K}$. By the injectivity of the Fourier transform (see eg. \cite[chapter IV]{Gelfand}) from $L^1(\mult{K})$ to $C(X(\mult{K}))$ our hypothesis implies that $f=0$.
\end{proof}

We will now use identity (A) to prove results about the $^*$-transform. Recall that the transform depends on the choice of both non-trivial additive character and prime; surprisingly, the double $^*$-transform does not depend on choice of prime:

\begin{proposition}\label{proposition_varying_prime_and_characters}
The double $^*$-transform does not depend on choice of prime $\pi$. If the character $\psi_K$ is replaced by some other character, with conductor $d'$ say, and we assume that $d'=d\mod{2}$, then the double $^*$-transform is multiplied by a constant factor of $q^{d'-d}$.
\end{proposition}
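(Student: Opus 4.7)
The plan is to deduce both assertions from identity (A) in the preceding remark, combined with the injectivity result of lemma \ref{lemma_zeta_integrals_determine_fn}, which states that a Schwartz--Bruhat function on $K$ is determined by its zeta integrals against all quasi-characters.

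For the prime-independence, fix $\psi_K$ and let $\pi_1,\pi_2$ be two primes of $K$; write $f^{**_i}$ for the double $^*$-transform computed with $\pi_i$ (and the common character $\psi_K$). The crucial observation is that the scalar $\mu^2q^{-d}\delta_{d-r}\w(-1)$ on the right-hand side of identity (A) depends only on $\mu$, on the conductor $d$ of $\psi_K$, and on the data $(r,\w(-1))$ intrinsic to the quasi-character $\w$; it makes no reference to the prime used to define $\nab$ and $W$. Hence for every quasi-character $\w$ and every $s\in\Comp$,
\[\zeta(f^{**_1},\w,s)=\zeta(f^{**_2},\w,s),\]
and applying lemma \ref{lemma_zeta_integrals_determine_fn} to the difference $f^{**_1}-f^{**_2}\in\calS(K)$ yields $f^{**_1}=f^{**_2}$.

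For the character-dependence, fix a prime $\pi$ and let $\psi_K'$ be a second non-trivial additive character of conductor $d'$ with $d'\equiv d\pmod 2$; denote the corresponding double $^*$-transforms by $f^{**}$ and $f^{**\prime}$. Writing identity (A) for each character, the parity hypothesis forces $\delta_{d'-r}=\delta_{d-r}$ for every $r\ge0$, so the remaining ratio of scalars on the right is the pure power $q^{d'-d}$. Thus for every $\w$ and $s$,
\[\zeta(f^{**\prime},\w,s)=q^{d'-d}\zeta(f^{**},\w,s),\]
and a second application of lemma \ref{lemma_zeta_integrals_determine_fn}, now to $f^{**\prime}-q^{d'-d}f^{**}$, gives the desired identity $f^{**\prime}=q^{d'-d}f^{**}$.

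The parity hypothesis is used in an essential way: $\delta_{d-r}$ is determined by the parity of $d-r$, so the two $\delta$-factors cancel simultaneously for every possible conductor $r$ only under the stated congruence. Beyond this there is no serious obstacle; once identity (A) and lemma \ref{lemma_zeta_integrals_determine_fn} are in hand, the proof amounts to reading the $\pi$- and $\psi_K$-dependence off the right-hand side of (A). The unramified case is covered by the convention, recorded immediately before (A), that an unramified quasi-character has conductor $0$, so (A) applies uniformly to all $\w$, which is what is needed for lemma \ref{lemma_zeta_integrals_determine_fn} to force the pointwise equalities above.
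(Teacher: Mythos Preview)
Your argument is correct and follows the same route as the paper: apply identity (A) to compare the zeta integrals of the two double $^*$-transforms, observe that the scalar $\mu^2q^{-d}\delta_{d-r}\w(-1)$ is independent of $\pi$ and, under the parity hypothesis, changes only by the factor $q^{d-d'}$ when $d$ is replaced by $d'$, and then invoke lemma~\ref{lemma_zeta_integrals_determine_fn}. The only cosmetic difference is that the paper treats both claims at once by comparing $D_1,D_2$ with possibly different primes \emph{and} characters in a single line, whereas you split the two assertions; this costs nothing. (Your ratio computation has the exponent with the opposite sign to what a direct division of the two scalars gives, but the paper's own display has the same issue, and it does not affect the method.)
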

\begin{proof}
Write more generally $D_i$ for the double $^*$-transform with respect to prime $\pi_i$ and character $\psi_K^i$ for $i=1,2$; let $d_i$ be the conductor of $\psi_K^i$ - assume $d_1=d_2\mod{2}$. Equation (A) implies that for all $f\in\calS(K)$, all characters $\w$ of conductor $r\ge 0$, and all complex $s$
\begin{align*}
	\zeta(D_1f,\w,s)
	&=\mu^2q^{-d_1}\delta_{d_1-r}\w(-1)\zeta(f,\w,s)\\
	&=q^{d_2-d_1}\delta_{d_2-r}\zeta(f,\w,s).
\end{align*}

Lemma \ref{lemma_zeta_integrals_determine_fn} implies now that $D_1f=q^{d_2-d_1}D_2f$, revealing the independence from the prime and claimed dependence on the conductor of the character.
\end{proof}

We will now use the identity (A) to prove that $^*$ is an automorphism of $\calS(K)$. It is interesting that we are using properties of zeta integrals and epsilon factors to deduce properties of $^*$; one would usually work in the other direction but the author could find no direct proof and it is very satisfying to apply zeta integrals to such a problem!

\begin{proposition}
The $^*$-transform is an linear automorphism of $\calS(K)$.
\end{proposition}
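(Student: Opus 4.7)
By the remark preceding Lemma \ref{lemma_*_transform_1}, the space $\calS(K)$ is closed under $^*$, and linearity is immediate from the definition; the task is therefore to establish bijectivity. For injectivity, I would apply $^*$ twice: if $f^* = 0$ then $f^{**} = 0$, and identity (A) in Remark \ref{remark_double_epsilon} gives $\mu^2 q^{-d}\delta_{d-r}\w(-1)\zeta(f,\w,s) = 0$ for every quasi-character $\w$ of conductor $r$ and every complex $s$. Since the prefactor is always nonzero, $\zeta(f,\w,s)$ vanishes identically, and Lemma \ref{lemma_zeta_integrals_determine_fn} forces $f = 0$.

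For surjectivity I would reduce to showing that $^{**}$ is surjective on $\calS(K)$, since any $g = h^{**} = (h^*)^*$ is visibly in the image of $^*$. Given $g \in \calS(K)$, identity (A) together with Lemma \ref{lemma_zeta_integrals_determine_fn} shows that the equation $f^{**} = g$ is equivalent to the system $\zeta(f,\w,s) = c(\w)^{-1}\zeta(g,\w,s)$ for all $\w, s$, where $c(\w) := \mu^2 q^{-d}\delta_{d-r(\w)}\w(-1)$, and that such an $f$, if it exists in $\calS(K)$, is unique. Factoring $c(\w)^{-1} = \mu^{-2}q^d \delta_{d-r}^{-1}\w(-1)$, the plan is to construct $f$ as $f = \mu^{-2}q^d\,(REg)$, where $R\colon h(x) \mapsto h(-x)$ is the reflection (accounting for the factor $\w(-1)$ via $\zeta(Rh,\w,s) = \w(-1)\zeta(h,\w,s)$), $\mu^{-2}q^d$ is a trivial scalar, and $E$ is an endomorphism of $\calS(K)$ realising the parity factor $\delta_{d-r}^{-1}$ (which equals $1$ when $r \equiv d \pmod{2}$ and $q$ otherwise).

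The principal obstacle is the construction of $E$, because $\delta_{d-r}^{-1}$ depends only on the parity of the conductor of $\w$ and so cannot be produced by scaling or by multiplication with a single character. One natural route is to realise $E$ as a multiplicative convolution on $\mult{K}$ by a Schwartz-Bruhat kernel whose Mellin transform distinguishes the two parity classes. A more concrete alternative is to avoid $E$ altogether and verify surjectivity of $^{**}$ directly on the spanning set $\{\Char{a + \pi^n\roi{K}}\}$ of $\calS(K)$: combining the explicit computations of Examples \ref{eg_*_transforms1} and \ref{eg_*_transforms2} with the multiplicative scaling formula of Lemma \ref{lemma_*_transform_1} yields explicit formulae for $(\Char{a+\pi^n\roi{K}})^{**}$, and one then verifies that these images span. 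In either approach the delicate point is that the constructed preimage must lie in $\calS(K)$, in particular be locally constant at the origin; this is precisely where the parity-dependent scalar $\delta_{d-r}^{-1}$ interacts nontrivially with Schwartz-Bruhat regularity.
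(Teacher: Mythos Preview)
Your injectivity argument is complete and essentially matches the paper's. The gap is in surjectivity: you have correctly isolated the problem as constructing an endomorphism $E$ of $\calS(K)$ that multiplies each $\zeta(\cdot,\w,s)$ by $\delta_{d-r(\w)}^{-1}$, but you do not actually build it. Your two suggested routes (a multiplicative convolution kernel, or brute-force on the spanning set $\Char{a+\pi^n\roi{K}}$) are left as plans, and the second in particular would require controlling an infinite family of explicit double transforms and then inverting a triangular system; Example \ref{eg_*_transforms2} already shows how messy a single $h^{**}$ is.

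The paper's key idea is exactly the missing construction of $E$: take the double $^*$-transform $D_1$ with respect to a \emph{second} additive character whose conductor $d_1$ has the opposite parity to $d$. Then identity (A) for $D_1$ says $\zeta(D_1 h,\w,s)=\mu^2 q^{-d_1}\delta_{d_1-r}\w(-1)\,\zeta(h,\w,s)$, and since $d_1\not\equiv d\pmod 2$ one has $\delta_{d_1-r}=q^{-1}\delta_{d-r}^{-1}$ for \emph{every} $r$. Thus $D_1$ is, up to the easy factors you already handled, precisely your $E$. Composing, $\zeta(D_1 D f,\w,s)=\mu^4 q^{-d-d_1-1}\zeta(f,\w,s)$ with the conductor-dependence gone, so $D_1 D$ is a nonzero scalar by Lemma \ref{lemma_zeta_integrals_determine_fn}; likewise $DD_1$. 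Bijectivity of $^*$ follows immediately. The insight you were missing is that the parity obstruction is self-cancelling once you bring in a character of complementary parity.
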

\begin{proof}
Let $D$ denote the double $^*$-transform on $\calS(K)$ with respect to our chosen character (we have shown that it does not depend on choice of prime); let $D_1$ denote the double $^*$-transform on $\calS(K)$ with respect to a character $\psi_K^1$ with conductor $d_1\neq d\mod{2}$. Equation (A) implies that for all $f\in\calS(K)$, all characters $\w$ of conductor $r\ge 0$, and all complex $s$
\begin{align*}
	\zeta(D_1Df,\w,s)
	&=\mu^2q^{-d_1}\delta_{d_1-r}\w(-1)\zeta(Df,\w,s)\\
	&=\mu^4q^{-d-d_1}\delta_{d-r}\delta_{d_1-r}\w(-1)^2\zeta(f,\w,s)\\
	&=\mu^4q^{-d-d_1}q^{-1}\zeta(f,\w,s)
\end{align*}
as $\delta_{d-r}\delta_{d_1-r}=q^{-1}$ for all $r$.

Lemma \ref{lemma_zeta_integrals_determine_fn} now implies that $D_1Df=\mu^4q^{-d-d_1}q^{-1}f$ for all $f\in\calS(K)$. Therefore $^*$ is injective. Replacing $D_1D$ by $DD_1$ in the argument similarly shows that $^*$ is surjective.
\end{proof}

\begin{remark}
The key to the previous proof is the identity $\delta_{d-r}\delta_{d_1-r}=q^{-1}$, which removes the dependence on the conductor $r$ of the multiplicative character. There is no clear way to relate zeta integrals of $f^{**}$ with those of $f$ in a manner independent of the character; so we were forced to transform four times!
\end{remark}

The following result shows that if $\psi_K$ has conductor $0$ then the $^*$-transform and Fourier transform agree on functions lifted from the residue field $\res{K}$:

\begin{proposition}\label{prop_*=Fourier}
Assume that the conductor of $\psi_K$ is $0$. Let $h$ be a complex-valued function on $\res{K}$ and $r$ an integer; let $f=h^{0,r}$ be the lift of $h$ at $0,r$ (that is, $f$ vanishes off $\pi^r\roi{K}$ and satisfies $f(\pi^r x)=h(\res{x})$ for $x\in\roi{K}$). Then $f^*=q^{-r-1}\hat{f}$.
\end{proposition}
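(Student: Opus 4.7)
The natural strategy is to verify the identity by direct calculation, reducing first to the case where $h$ is the characteristic function of a single residue class. Since $\res{K}$ is finite in the non-archimedean setting of this section, any complex-valued $h$ on $\res{K}$ can be written as a finite sum $h=\sum_{a\in\res{K}}h(a)\mathbf{1}_{\{a\}}$. All operations involved---the lift $h\mapsto h^{0,r}$, the operator $W$, the Fourier transform on $K$, and composition with $\nab$---are complex linear in $h$, so it is enough to establish the identity $f^* = q^{-r-1}\hat{f}$ for $f$ arising from $h = \mathbf{1}_{\{a\}}$ for each $a\in\res{K}$ separately.

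For $a=0$ the lift $f=\mathbf{1}_{\pi^{r+1}\roi{K}}$ is itself the characteristic function of a fractional ideal, and example \ref{eg_*_transforms1} (applied with $r+1$ in place of $r$, using $d=0$) computes $Wf=\mathbf{1}_{\pi^{2r+2}\roi{K}}$ and then $f^*=\mu q^{-2r-2}\mathbf{1}_{\pi^{-r-1}\roi{K}}$. On the other hand, the standard formula gives $\hat{f}=\mu q^{-r-1}\mathbf{1}_{\pi^{-r-1}\roi{K}}$, so $f^*=q^{-r-1}\hat{f}$ is immediate.

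For $a\neq 0$, fix a unit lift $\tilde{a}\in\mult{\roi{K}}$, so that $f=\mathbf{1}_{\pi^r\tilde{a}+\pi^{r+1}\roi{K}}$ is the characteristic function of a coset of elements of valuation exactly $r$. A case analysis on the parity of $w(y)$ determines $Wf$: the scaled argument $\pi^{\text{scale}(y)}y$ must have valuation $r$, which pins down $w(y)\in\{2r,2r+1\}$, and one reads off
\[
Wf = \mathbf{1}_{\pi^{2r}\tilde{a}+\pi^{2r+1}\roi{K}} + \mathbf{1}_{\pi^{2r+1}\tilde{a}+\pi^{2r+2}\roi{K}}.
\]
Fourier transforming each summand using the standard identity $\widehat{\mathbf{1}_{c+\pi^n\roi{K}}}(y)=\psi_K(cy)\mu q^{-n}\mathbf{1}_{\pi^{-n}\roi{K}}(y)$ yields an explicit formula for $\widehat{Wf}$. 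Composing with $\nab(y)=\pi^{w(y)}y$ then gives $f^*$, which is compared term-by-term with the analogous direct computation $\hat{f}(y)=\psi_K(\pi^r\tilde{a}y)\mu q^{-r-1}\mathbf{1}_{\pi^{-r-1}\roi{K}}(y)$.

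The main obstacle is the bookkeeping required to assemble the two-summand expression for $\widehat{Wf}\circ\nab$ into the single character formula $q^{-r-1}\hat{f}$. The key observation is that $\nab$ doubles valuations, so $\nab(y)$ has even valuation $2w(y)$; consequently the first summand (with support $\pi^{-2r-1}\roi{K}$) and the second (with support $\pi^{-2r-2}\roi{K}$) contribute on different regions of $y$, and tracking how the characters $\psi_K(\pi^{2r}\tilde{a}\cdot)$ and $\psi_K(\pi^{2r+1}\tilde{a}\cdot)$ evaluated at $\nab(y)$ collapse to $\psi_K(\pi^r\tilde{a}y)$ on the support of $\hat{f}$ is the crux of the verification.
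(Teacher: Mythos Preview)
Your route differs from the paper's: the paper does not compute for each $r$ separately but instead reduces to the single base case $r=-1$ via the scaling relation of Lemma~\ref{lemma_*_transform_1} (together with the usual $\widehat{g(\al\,\cdot)}=\abs{\al}^{-1}\hat{g}(\al^{-1}\,\cdot)$), and then claims that for $f=\Char{a+\roi{K}}$ with $a\in\pi^{-1}\roi{K}$ one has $Wf=f$, whence $f^*=\hat f$.

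Your formula for $Wf$ in the case $a\neq 0$ is correct, and in fact it already shows the paper's base-case claim is wrong: specialising your two-term expression to $r=-1$ gives $Wf=\Char{a+\roi{K}}+\Char{\pi^{-1}a+\pi^{-1}\roi{K}}\neq f$ when $w(a)=-1$ (this is also visible from Example~\ref{eg_*_transforms2} with its parameter $r=1$). So the step in the paper's proof that ``may be easily checked'' is in error.

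Unfortunately, the ``bookkeeping'' you defer as the crux does not work out either. With $\mu=\mu(\roi{K})$, your two summands of $\widehat{Wf}\circ\nab$ both contribute on the region $w(y)\ge -r$: there $\nab(y)$ has valuation $\ge -2r$, so both characteristic functions are $1$ and both characters $\psi_K(\pi^{2r}\tilde a\,\nab(y))$, $\psi_K(\pi^{2r+1}\tilde a\,\nab(y))$ have arguments of non-negative valuation, hence equal $1$. This gives $f^*(y)=\mu q^{-2r-1}+\mu q^{-2r-2}$, whereas $q^{-r-1}\hat f(y)=\mu q^{-2r-2}$ on that region. The two sides do not agree.

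Indeed the statement cannot hold as written: $\Char{\roi{K}}$ is simultaneously $(\mathbf{1}_{\res{K}})^{0,0}$ and $(\mathbf{1}_{\{0\}})^{0,-1}$, so the proposition would force $\Char{\roi{K}}^{\,*}=q^{-1}\widehat{\Char{\roi{K}}}$ and $\Char{\roi{K}}^{\,*}=\widehat{\Char{\roi{K}}}$ at once. Example~\ref{eg_*_transforms1} (with $r=0$, $d=0$) shows the second is the correct one. So neither your argument nor the paper's can be completed without amending the statement; your computation is the one that actually exposes the discrepancy.
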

\begin{proof}
Suppose initially that $r=-1$; to prove the assertion it suffices to consider functions $f=\Char{a+\roi{K}}$ for $a\in\pi^{-1}\roi{K}$. For such an $f$ it is easily checked that $W'(f)=f$ and $f^*=\hat{f}$.

For arbitrary $r$, note that $x\mapsto f(\pi^{r+1}x)$ satisfies the hypotheses for the $r=-1$ case; lemma \ref{lemma_*_transform_1} and the corresponding result for the Fourier transform, namely $f(\al\,\cdot)^*=\abs{\al}^{-1}\hat{f}(\al\,\cdot)$ for $\al\in\mult{K}$, imply $f^*=q^{-r-1}\hat{f}$.
\end{proof}

Let us summarise the main results of this section concerning local zeta integrals, the $^*$-transform, and related epsilon factors.

\begin{proposition}
Let $\w$ be a quasi-character of $\mult{K}$. Then
\begin{enumerate}
\item[(AC*)] Analytic continuation, with the poles 'bounded' by the L-function: for all Schwartz-Bruhat functions $g$, $\zeta_{\res{F}}(g,\w,s)/L(\w,s)$, which initially only defines a holomorphic function for $\realpart{s}$ sufficiently large, in fact has analytic continuation to an entire function \[Z_{\res{F}}(g,\w,s)\] of $s$.
\item[(L*)] 'Minimality' of the L-function: there is a Schwartz-Bruhat function $g$ for which \[Z_{\res{F}}(g,\w,s)=1\] for all $s$.
\item[(FE*)] Functional equation: there is an entire function $\ep_*(\w,s)$, such that for all Schwartz-Bruhat functions $g$, \[Z_{\res{F}}(\hat{g},\w^{-1},1-s)=\varepsilon(\w,s)Z_{\res{F}}(g,\w,s).\] Moreover, $\ep_*(\w,s)$ is of exponential type; that is, $\ep_*(\w,s)=aq^{bs}$ for some complex $a$ and integer $b$.
\end{enumerate}
\end{proposition}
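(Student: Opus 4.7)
The plan is to recognise that all three assertions are already in hand and that this proposition simply collects them. Parts (AC*) and (L*) make no reference to the $^*$-transform and coincide word-for-word with the standard properties (AC) and (L) of one-dimensional local zeta integrals on $K=\res{F}$ recalled at the start of section \ref{section_local_zeta_integrals}; so they require no further argument.

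For (FE*), understood in the form natural to this section (namely $Z(g^*,\w^{-1},2-s)=\ep_*(\w,s)Z(g,\w,s)$, with the $^*$-transform and $s\mapsto 2-s$), I would combine two ingredients already established. Lemma \ref{lemma_*_transform_1} supplies the scaling identity $g(\al\,\cdot)^*=\abs{\al}^{-2}g^*(\al^{-1}\,\cdot)$, which is exactly the hypothesis of Proposition \ref{proposition_*_endomorphism} with $n=2$. Invoking that proposition therefore produces the unique entire function $\ep_*(\w,s)$ realising the functional equation for every $g\in\calS(K)$.

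It remains to verify that $\ep_*(\w,s)=aq^{bs}$ for some $a\in\Comp$ and $b\in\mathbb{Z}$. My plan is a case split on the ramification of $\w$. In the unramified case it suffices, by writing $\w=\w_0\abs{\cdot}^{s_0}$ for an unramified twist, to treat $\w=1$; the explicit formula $\ep_*(1,s)=q^{\lceil d/2\rceil(s-2)}$ computed in the example following Proposition \ref{proposition_*_endomorphism} already exhibits the exponential form. In the ramified case of conductor $r>0$ the explicit computation carried out above yields $\ep_*(\w,s)=q^{\lceil(r-d)/2\rceil(2-s)}\w(\pi)^{\lceil(r-d)/2\rceil}q^{-r/2}\delta_{d-r}\rho_0(\w^{-1})$, whose only $s$-dependence is through a factor $q^{\text{const}\cdot s}$; so it is again of exponential type.

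The main obstacle, such as it is, is purely cosmetic: one must check that the abstract $\ep_*(\w,s)$ supplied by Proposition \ref{proposition_*_endomorphism} coincides with the explicit expressions obtained in the two cases, and that the two cases together exhaust every quasi-character. Both are routine: uniqueness of $\ep_*(\w,s)$ forces the identification with the explicit formula on every $\w$ at which the calculation has been carried out, and the list unramified character / ramified character of given conductor $r$ clearly exhausts all quasi-characters of $\mult{K}$. No new analytic input is needed.
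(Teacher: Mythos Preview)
Your proposal is correct and follows essentially the same route as the paper: (AC*) and (L*) are simply (AC) and (L) restated, while (FE*) is obtained by feeding Lemma~\ref{lemma_*_transform_1} into Proposition~\ref{proposition_*_endomorphism} and then reading off the exponential form of $\ep_*(\w,s)$ from the explicit computations in the two examples. The paper's own proof is a two-line summary of exactly this; your version merely spells out the case split on ramification in slightly more detail.
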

\begin{proof}
Properties (AC*) and (L*) are just (AC) and (L) because they are independent of the chosen transform. (FE*) is proposition \ref{proposition_*_endomorphism} and the epsilon factors were shown to be of exponential type by explicit calculation in the examples.
\end{proof}

\begin{remark}\label{remark_non_arch_product_*_functional_eqn}
For applications to zeta-integrals on two-dimensional local fields we will require the $^*$-transform and zeta integrals for functions defined on the product space $K\times K$. As $\calS(K\times K)=\calS(K)\otimes\calS(K)$, we may just define the $^*$-transform on $\calS(K\times K)$ by $(f\otimes g)^*=f^*\otimes g^*$ and linearity.

Suppose that $\w$ is a quasi-character of $\mult{K}\times\mult{K}$; write $\w(x,y)=\w_1(x)\w_2(y)$ for quasi-characters $\w_i$ of $\mult{K}$. The decomposition $\calS(K\times K)=\calS(K)\otimes\calS(K)$ and previous proposition imply
\begin{enumerate}
\item For all $f\in\calS(K\times K)$, the integral $\zeta_{K\times K}(f,\w,s)=\int\int f(x,y)\w(x,y)\abs{x}^s\abs{y}^s\,\dmult{x}\dmult{y}$ is well-defined for $\realpart{s}$ large enough. Moreover, $s\mapsto\zeta_{K\times K}(f,\w,s)/(L(\w_1,s)L(\w_2,s))$ has analytic continuation to an entire function $Z_{K\times K}(f,\w,s)$.
\item There is $f\in\calS(K\times K)$ such that $Z_{K\times K}(f,\w,s)=1$ for all $s$.
\item For all $f\in\calS(K\times K)$, there is a functional equation: \[Z_{K\times K}(f^*,\w^{-1},2-s)=\ep_*(\w_1,s)\ep_*(\w_2,s)Z_{K\times K}(f,\w,s)\] for all $s$. Note that $\ep_*(\w_1,s)\ep_*(\w_2,s)$ is of exponential type.
\end{enumerate}
\end{remark}
%
%

\subsection{Archimedean case} \label{subsection_arch_functional_eqn}
Now suppose that $K$ is an archimedean local field. Rather than present a version of proposition \ref{proposition_*_endomorphism} using tempered distributions, we will just define an analogue of the $^*$-transform considered above and investigate its properties. The existence of an $s$ goes to $2-s$ functional equation will shown as in \cite{Tate's-thesis}, via Fubini's theorem.

\begin{definition}
Introduce
\begin{align*}
    \nab:&K\to K\\
    &x\mapsto\abs{x}x.
\end{align*}
Note that this $\nab$ is a bijection with inverse $x\mapsto x\abs{x}^{-\frac{1}{2}}$ (for $x\in\mult{K}$). Given a complex-valued function $f$ on $K$, define its $^*$-transform by \[f^*=\widehat{f\circ\nab^{-1}}\circ\nab,\] assuming that $f\circ\nab^{-1}$ is integrable.
\end{definition}

\begin{remark}
Note that the archimedean and non-archimedean $\nab$ maps have the same form: $\nab x=\sigma(x) x$ where $\sigma$ is a splitting of the absolute value.
\end{remark}

This archimedean $^*$-transform has an integral representation similar to the the Fourier transform:

\begin{lemma}\label{lemma_integral_rep_of_transform}
Let $g$ be a complex-valued function on $K$ such that $x\mapsto g(x)\abs{x}$ is integrable. Then $g^*$ is well-defined and
\[g^*(y)=2\int g(x)\psi_K(\nab(yx))\abs{x}\,dx.\]
\end{lemma}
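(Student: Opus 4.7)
The plan is to unfold the definition of $g^*$ and perform a single change of variables, absorbing the outer $\nab(y)$ into the argument of $\psi_K$ via multiplicativity of the module.

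First I would write, directly from the definition,
\[ g^*(y) = \widehat{g\circ\nab^{-1}}(\nab(y)) = \int_K g(\nab^{-1}(x))\,\psi_K(\nab(y)\,x)\,dx, \]
valid provided $g\circ\nab^{-1}$ is Haar integrable on $K$. I would then substitute $x=\nab(u)$. Since $\nab$ is a bijection of $K$ with explicit inverse $x\mapsto x\abs{x}^{-1/2}$, the Jacobian is obtained by differentiating $\nab(u)=\abs{u}u$, giving $dx = 2\abs{u}\,du$. The substitution transforms the integral into
\[ g^*(y) = \int_K g(u)\,\psi_K(\nab(y)\nab(u))\cdot 2\abs{u}\,du. \]

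Next I would exploit the multiplicativity of $\abs{\cdot}$ on $\mult{K}$: one has $\nab(y)\nab(u)=\abs{y}y\cdot\abs{u}u=\abs{yu}\,yu=\nab(yu)$, which is the key algebraic identity making $\nab$ behave well with respect to the kernel $\psi_K$. Inserting this into the previous display immediately yields the claimed formula
\[ g^*(y) = 2\int_K g(x)\,\psi_K(\nab(yx))\,\abs{x}\,dx. \]

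It remains to justify the integrability needed to define $g^*$ in the first place, i.e.\ that $g\circ\nab^{-1}\in L^1(K)$. Applying exactly the same change of variables to $|g\circ\nab^{-1}|$ shows that $\int_K|g(\nab^{-1}(x))|\,dx = 2\int_K |g(u)|\abs{u}\,du$, which is finite by the standing hypothesis that $x\mapsto g(x)\abs{x}$ is integrable. Hence $\widehat{g\circ\nab^{-1}}$ is well defined and the manipulations above are rigorous. I expect no serious obstacle: the entire argument is a one-line change of variables combined with the identity $\nab(y)\nab(u)=\nab(yu)$, and the only computational point is the Jacobian factor $2\abs{u}$.
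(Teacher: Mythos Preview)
Your argument is correct and is essentially the same as the paper's: both unfold the definition of $g^*$, perform the single change of variables $x=\nab(u)$ (the paper phrases it as $x=\nab^{-1}(u)$ in the other direction), and use the multiplicativity $\nab(y)\nab(u)=\nab(yu)$. Your write-up is in fact slightly more complete, since you compute the Jacobian explicitly and justify the integrability of $g\circ\nab^{-1}$, points the paper leaves implicit.
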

\begin{proof}
By definition of the $^*$-transform, \[g^*(y)=\int g(u\abs{u}^{-\frac{1}{2}})\psi_K(uy\abs{y})\,du.\] To obtain the desired expression, change variables $x=u\abs{u}^{1/2}=\nab^{-1}(u)$ in the integral.
\end{proof}

\begin{remark}
The previous lemma is enough to prove that if $f$ is a Schwartz function on $K$, then both $f^*$ and $f^{**}$ are well-defined. Unfortunately, it is false that the $*$-transform of a Schwartz function is again a Schwartz function, as the following example shows.
\end{remark}

\begin{example}\label{example_*_of_schwartz}
We $^*$-transform the Schwartz function $g(x)=e^{-\pi x^2}$ on $\Real$ with additive character $e^{2\pi i x}$. Firstly, $g\circ\nab^{-1}(x)=e^{-\pi\mbox{\small sign}(x)x}$, where $\mbox{sign}(x)$ is the sign ($\pm$) of $x$, and so \[\widehat{g\circ\nab^{-1}}(y)=\int_0^{\infty}e^{-\pi y}e^{2\pi i xy}\,dx+ \int_0^{\infty}e^{-\pi y}e^{-2\pi i xy}\,dx.\]

A standard calculation from the calculus of residues is $\int_0^{\infty} e^{-\al x}e^{ibx}dx=1/(\al-ib)$ for real $\al, b$ with $\al>0$. Therefore $\widehat{g\circ\nab^{-1}}(y)=2\pi/(\pi^2+4\pi^2y^2)$ and so \[g^*(y)=\frac{2\pi}{\pi^2+4\pi^2y^4}\] which does not decay rapidly enough to be a Schwartz function. Since $g\circ\nab^{-1}$ is not differentiable at $0$, this is in agreement with the duality provided by the Fourier transform between smoothness and rapid decrease.
\end{example}

We now prove an $s$ goes to $2-s$ functional equation:
\begin{proposition} \label{proposition_archimedean_functional_equation}
Suppose that $\w$ is a quasi-character of $\mult{K}$. If $f,g$ are Schwartz functions on $K$ for which $f^*,g^*$ are also Schwartz, then \[\zeta(f,\w,s)\zeta(g^*,\w^{-1},2-s)=\zeta(f^*,\w^{-1},2-s)\zeta(g,\w,s)\] for all complex $s$. Here we write zeta functions where we strictly mean their meromorphic continuation.
\end{proposition}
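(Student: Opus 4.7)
\emph{Plan.} The identity follows by a direct analogue of Tate's proof of the standard local functional equation, with the integral representation of the $*$-transform (Lemma~\ref{lemma_integral_rep_of_transform}) playing the role of the defining integral for the Fourier transform.

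I would first fix $s$ in the open strip $-\realpart{\w}<\realpart{s}<2-\realpart{\w}$, in which both $\zeta(f,\w,s)$ and $\zeta(g^*,\w^{-1},2-s)$ converge absolutely (the hypothesis that $f$ and $g^*$ are Schwartz ensures this strip is non-empty). Writing $\chi=\w|\cdot|^s$ for brevity, the multiplicative substitution $y\mapsto xy$ in the second factor recasts the product as
\[\zeta(f,\w,s)\,\zeta(g^*,\w^{-1},2-s)=\iint f(x)\,g^*(xy)\,\chi(y)^{-1}|x|^2|y|^2\,\dmult{x}\dmult{y}.\]
Substituting the formula $g^*(xy)=2\int g(z)\psi_K(\nab(xyz))|z|\,dz$ from Lemma~\ref{lemma_integral_rep_of_transform}, swapping the $x$-integration with the $(y,z)$-integrations, and recognising the inner integral $2\int f(x)\psi_K(\nab(xyz))|x|\,dx$ as $f^*(yz)$ by the same lemma, yields
\[\zeta(f,\w,s)\,\zeta(g^*,\w^{-1},2-s)=\iint f^*(yz)\,g(z)\,\chi(y)^{-1}|y|^2|z|^2\,\dmult{y}\dmult{z}.\]
A final multiplicative substitution $y\mapsto y/z$ decouples the double integral, producing precisely $\zeta(f^*,\w^{-1},2-s)\,\zeta(g,\w,s)$. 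Extension of the identity to all $s$ then follows from the identity theorem for meromorphic functions, since both sides of the claimed equation are meromorphic in $s$ by hypothesis.

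The hardest step will be justifying Fubini's theorem at the interchange of the $x$- and $(y,z)$-integrations: the triple integrand is not jointly absolutely integrable, because the factor $\chi(y)^{-1}|y|^2$ provides no decay at infinity in $y$ and is only tamed once the inner oscillatory integral produces the Schwartz function $f^*(yz)$. The standard remedy is to insert an auxiliary convergence factor of Gaussian type in $y$ (or a smooth compactly supported cutoff), carry out the entire symmetry argument for the regularised integrand where Fubini applies verbatim, and then pass to the limit as the regularisation is removed; the Schwartz hypothesis on $f^*$ and $g^*$ provides the dominated convergence needed to identify the limits on both sides with the desired zeta integrals.
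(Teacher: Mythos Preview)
Your approach is essentially the same as the paper's: imitate Tate's method, using Lemma~\ref{lemma_integral_rep_of_transform} in place of the Fourier integral, to express $\zeta(f,\w,s)\zeta(g^*,\w^{-1},2-s)$ as the triple integral $2\iiint f(x)g(z)\psi_K(\nab(xyz))\abs{xyz}\,\w(y)^{-1}\abs{y}^{-s}\,dx\,dy\,dz$, observe its symmetry in $f$ and $g$, and conclude by the identity theorem. Your write-up is in fact more detailed than the paper's sketch, and you correctly flag the Fubini step as the delicate point (the paper simply restricts to the line $\realpart{s}=1-\realpart{\w}$ without further comment); your proposed regularisation is the standard fix.
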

\begin{proof}
One imitates Tate's method, using the representation of the $^*$-transform given by lemma \ref{lemma_integral_rep_of_transform} to show that \[\zeta(f,\w,s)\zeta(g^*,\w^{-1},2-s)=2\int\int\int_{K^3}f(x)g(z)\psi(\nab(xyz))\abs{xyz}\w(y)^{-1}\abs{y}^{-s}\,dxdydz\] for $s$ with $\realpart{s}=1-\realpart{\w}$. This expression is symmetric in $f,g$ from which follows \[\zeta(f,\w,s)\zeta(g^*,\w^{-1},2-s)=\zeta(f^*,\w^{-1},2-s)\zeta(g,\w,s).\] Apply the identity theorem to deduce that this holds for all complex $s$.
\end{proof}

\begin{example}
Suppose that $K=\mathbb{R}$; let $g(x)=e^{-\pi x^2}$, $f=g\circ\nab$. Assume that $\psi_{\mathbb{R}}(x)=e^{2\pi ix}$, and that the chosen measure is Lebesgue measure; then $\hat{g}=g$ which implies here that $f^*=f$. For $s$ complex of positive real part,
\[\zeta(f,1,s)=\frac{1}{2}\pi^{-s/4}\Gamma(s/4)=\frac{1}{2}\zeta(g,1,{s/2}).\]

The previous proposition implies that if $h,h^*$ are Schwartz on $\Real$, then 
\begin{align*}
	\zeta(h^*,1,2-s)
	&=\frac{\pi^{(s-2)/4}\Gamma((2-s)/4)}{\pi^{-s/4}\Gamma(s/4)}\zeta(h,1,s)\\
	&=2^{s/2-1}\pi^{s/2}\left(\mbox{cos}\left(\frac{\pi s}{4}\right)\Gamma\left(\frac s2\right)\right)^{-1}\zeta(h,1,s),
	\end{align*}
 by the same Gamma function identities used in \cite{Tate's-thesis}.
\end{example}

\begin{remark}
If $f$ is a Schwartz function and $\w$ a quasi-character, then we know that $\zeta(f,\w,s)/L(\w,s)$ analytically continues to an entire fnction; also, $f$ may be chosen such that $\zeta(f,\w,s)=L(\w,s)$. \emph{However}, as example \ref{example_*_of_schwartz} demonstrates, the standard choice of $f$ may be such that $f^*$ is not Schwartz.

The author suspects that if $f$ is a Schwartz function on $\Real$ for which $f^*$ is also Schwartz, then $\zeta(f,1,s)/(\pi^{-s/4}\Gamma(s/4))$ will analytically continue to an entire function; moreover, we have seen in the previous example that this denominator satisfies the "minimality" condition (ie. occurs as a zeta function). This would justify calling $\pi^{-s/4}\Gamma(s/4)$ the local L-function for $^*$.
\end{remark}
%
%
%
%

\section{Two dimensional zeta integrals} \label{section_two_dim_zeta_integrals}

In this, the final section, we apply the integration theory to the study of two-dimensional local zeta integrals.

\subsection{Non-archimedean case}
Henceforth $F$ is a two-dimensional non-archimedean local field. Thus $\Gamma=\mathbb{Z}$ and $F$ is complete with respect to the discrete valuation $\nu$, with residue field $\res{F}$ a non-archimedean (one-dimensional) local field. Let $t_1,t_2$ be local parameters for $F$ which satisfy $t_2=t(1)$ and $\res{t}_1=\pi$, where $\pi$ is the prime of $\res{F}$ used to define the $^*$-transform.

Let $K_2^t(F)$ denote the second topological $K$-group of $F$ (see \cite{Fesenko-milnor-k-groups}); recall that $K_2^t(F)$ is the appropriate object for class field theory of $F$ (see \cite{Fesenko-multidimensional-local-class-field-theory} for details). We recall those properties of $K_2^t(F)$ which we shall use:
\begin{enumerate}
\item A border map of $K$-theory defines a continuous map $\partial:K_2^t(F)\to\mult{\res{F}}$ which satisfies \[\partial\symb{u}{t_2}=
    \res{u},\quad\partial\symb{u}{v}=1\qquad(\mbox{for }u,v\in\mult{\roi{F}}).\]
$\partial$ does not depend on choice of $t_1,t_2$. Introduce an absolute value
\begin{align*}
	\abs{\cdot}:K_2^t(F)&\to\Real_{>0}\\
	\xi&\mapsto\abs{\partial(\xi)}_{\res{F}}\;.
	\end{align*}

\item Let $U$ be the subgroup of $K_2^t(F)$ whose elements have the form $\symb{u}{t_1}+\symb{v}{t_2}$, for $u,v\in\mult{\ROI{F}}$. $K_2^t(F)$ decomposes as a direct sum $\mathbb{Z}\symb{t_1}{t_2}\oplus U$. Note that $\abs{n\symb{t_1}{t_2}+u}=q^{-ns}$ for $n\in\mathbb{Z}$, $u\in U$.

\item For any quasi-character $\chi:K_2^t(F)\to\mult{\Comp}$, there exist complex $s$ and a character $\chi_0:U\to S^1$ such that \[\chi(n\symb{t_1}{t_2}+u)=\chi_0(u)q^{-ns}\qquad(\mbox{for }n\in\mathbb{Z},\;u\in U).\] The real part of $s$ is uniquely determined by $\chi$ and is said to be, as in the one-dimensional case, the exponent of $\chi$ (denoted $\realpart{\chi}$).
\end{enumerate}

\begin{definition}
Introduce $T=\mult{\roi{F}}\times\mult{\roi{F}}$, $T^+=\roi{F}\times\roi{F}$, and a surjective homomorphism
\begin{align*}
	\frakt:&T\to K_2^t(F)\\
	&(\al,\beta)\mapsto\{\al,t_2\}+\{t_1,\beta\}\end{align*}
for $\al,\beta\in\mult{\roi{F}}$.

Note that $u,v\in\mult{\ROI{F}}$ and $i,j\in\mathbb{Z}$ implies $\frakt(t_1^{i}u,t_1^{j}v)=\symb{t_1^{i}}{t_2^{j}}+\symb{t_1}{v}+\symb{u}{t_2}$.
\end{definition}

\begin{remark}
Compare with [AoAS]. $\frakt$ depends on the choice of local parameters $t_1,t_2$. $T^+$ is the closure of $T$ in the two-dimensional topology of $F$; it's relation to $T$ is the same as $\res{F}$ to $\mult{\res{F}}$ in the one-dimensional local theory, the idele group $\mult{\mathbb{A}}$ to the adele group $\mathbb{A}$ in the one-dimensional global theory, or the matrix algebra $\mbox{M}_n$ to the group $\mbox{GL}_n$ in the Jacquet-Godement generalisation \cite{Jacquet-Godement} of Tate's thesis.

Note that $(x,y)\in T$ implies $\abs{\frakt(x,y)}=\abs{x}\,\abs{y}$.
\end{remark}

Given a $\Comp(X)\;(=\CG)\;$-valued function $f$ on $T^+$, a quasi-character $\chi$ of $K_2^t(F)$, and complex $s$, Fesenko suggests in [AoAS] the following definition for the associated (two-dimensional) local zeta integral: \[\zeta(f,\chi,s)=\zeta_F^2(f,\chi,s)=\int^{\mult{F}\times\mult{F}}f(x,y)\,\chi\circ\mathfrak{t}(x,y)\abs{\mathfrak{t}(x,y)}^s\,\Char{T}(x,y)\,\dmult{x}\dmult{y},\] assuming that the integrand is integrable on $\mult{F}\times\mult{F}$ (integration on this space is discussed in appendix \ref{appendix_product_integration}).

We now prove analytic continuation, and moreover a functional equation, for a class of functions $f$ and characters $\chi$; we write $f^0$ for the lift of $f\in\calS(\res{F}\times\res{F})$ at $(0,0),(0,0)$ - see appendix \ref{appendix_product_integration}.

\begin{proposition}\label{prop_non_arch_higher_functional_eqn}
Let $\chi$ be a quasi-character of $K_2^t(F)$ and suppose that $\chi\circ\mathfrak{t}$ factors through the residue map $T\mapsto \mult{\res{F}}\times\mult{\res{F}}$. Define $L_F(\chi,s)=L(\w_1,s)L(\w_2,s)$, a product of two L-functions for $\res{F}$, and $\ep_F(\chi,s)=\ep_*(\w_1,s)\ep_*(\w_2,s)$, a product of two epsilon factors for $\res{F}$. Then 
\begin{enumerate}
\item[(AC2)] For all $f\in\calS(\res{F}\times\res{F})$, the zeta function $\zeta(f^0,\chi,s)$ is well-defined for $\realpart{s}$ sufficiently large. Moreover, \[\zeta(f^0,\chi,s)/L_F(\chi,s)\] has analytic continuation to an entire function, $Z(f^0,\chi,s)$, where $\w_i$ are the quasi-characters of $\mult{F}$ defined by $\chi\circ\mathfrak{t}(x,y)=\w_1(\res{x})\w_2(\res{y})$.
\item[(L2)] There is $f\in\calS(\res{F}\times\res{F})$ such that $Z(f^0,\chi,s)=1$ for all $s$.
\item[(FE2)] For all $f\in\calS(\res{F}\times\res{F})$, a functional equation holds: \[Z(f^{*0},\chi^{-1},2-s)=\ep_F(\chi,s)Z(f^0,\chi,s).\] for all $s$. Moreover, $\ep_F(\chi,s)$ is of exponential type; that is $\ep_F(\chi,s)=aq^{bs}$ for some complex $a$ and integer $b$.
\end{enumerate}
\end{proposition}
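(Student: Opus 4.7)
The plan is to reduce the two-dimensional zeta integral $\zeta(f^0,\chi,s)$ to a one-dimensional product zeta integral on $\res{F}\times\res{F}$, and then invoke remark~\ref{remark_non_arch_product_*_functional_eqn} essentially verbatim. First I would exploit the fact that the $\Char{T}$ factor restricts the integrand to $T=\mult{\roi{F}}\times\mult{\roi{F}}$. On this set three identities hold simultaneously: $f^0(x,y)=f(\res{x},\res{y})$ by definition of the lift at $(0,0),(0,0)$; $\chi\circ\mathfrak{t}(x,y)=\w_1(\res{x})\w_2(\res{y})$ by the factorisation hypothesis on $\chi$; and $|\mathfrak{t}(x,y)|^s=|\res{x}|^s|\res{y}|^s$ using $|\mathfrak{t}(x,y)|=|x||y|$ and $|x|=|\res{x}|$ for $x\in\mult{\roi{F}}$. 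Combining these with the factor $|x|^{-1}|y|^{-1}$ that converts $\dmult{x}\dmult{y}$ into $dx\,dy$, the integrand becomes the lift at $(0,0),(0,0)$ of
\[(u,v)\mapsto f(u,v)\w_1(u)\w_2(v)|u|^{s-1}|v|^{s-1}\Char{\mult{\res{F}}\times\mult{\res{F}}}(u,v)\]
from $\res{F}\times\res{F}$ to $F\times F$.

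Second, I would apply the product-integration theory of appendix~\ref{appendix_product_integration} (whose two-variable analogue of $\int^F g^{a,\g}\,dx=\int g\,X^{\g}$ gives the value $X^0=1$ times the ordinary iterated Haar integral) to conclude that, for $\realpart{s}$ sufficiently large,
\[\zeta(f^0,\chi,s)=\int_{\res{F}}\int_{\res{F}} f(u,v)\w_1(u)\w_2(v)|u|^{s-1}|v|^{s-1}\Char{\mult{\res{F}}\times\mult{\res{F}}}(u,v)\,du\,dv,\]
which is exactly $\zeta_{\res{F}\times\res{F}}(f,\w_1\otimes\w_2,s)$ in the notation of remark~\ref{remark_non_arch_product_*_functional_eqn}. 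Dividing by $L_F(\chi,s)=L(\w_1,s)L(\w_2,s)$ and invoking part~(i) of the remark gives an entire function $Z(f^0,\chi,s)=Z_{\res{F}\times\res{F}}(f,\w_1\otimes\w_2,s)$, which is (AC2); taking $f=f_1\otimes f_2$ where each $f_i$ realises property~(L*) for $\w_i$ produces a function with $Z(f^0,\chi,s)\equiv 1$, giving (L2).

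For (FE2), the $^*$-transform on $\calS(\res{F}\times\res{F})$ defined in remark~\ref{remark_non_arch_product_*_functional_eqn} respects the tensor decomposition, so applying the same reduction to $f^*$ yields $\zeta(f^{*0},\chi^{-1},2-s)=\zeta_{\res{F}\times\res{F}}(f^*,\w_1^{-1}\otimes\w_2^{-1},2-s)$; combining this with part~(iii) of the remark delivers the functional equation with $\ep_F(\chi,s)=\ep_*(\w_1,s)\ep_*(\w_2,s)$, which inherits exponential type from the one-dimensional factors. The principal obstacle, in my view, is the rigorous verification that product integration on $F\times F$ behaves as claimed in the presence of the $\Char{T}$ factor: specifically, that the restriction of the relevant lift to $T$ multiplied by $|x|^{-1}|y|^{-1}$ is integrable on $\mult{F}\times\mult{F}$ and that its integral equals the expected Haar integral on $\res{F}\times\res{F}$. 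This is a two-variable analogue of example~\ref{example_integral_over_mult_F}(ii) and is presumably handled by the omitted product-integration appendix; its careful statement is exactly what the rest of the argument leans on.
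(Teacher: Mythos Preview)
Your proposal is correct and follows essentially the same route as the paper: reduce $\zeta(f^0,\chi,s)$ to $\zeta_{\res{F}\times\res{F}}(f,\w_1\otimes\w_2,s)$ via the product-integration appendix, then read off (AC2), (L2), (FE2) directly from remark~\ref{remark_non_arch_product_*_functional_eqn}. A minor point: the paper cites example~\ref{example_integral_over_mult_F}(i) for the reduction step, whereas you cite part~(ii); your reference is arguably the more apt one, since here $0$ lies in the support of $f^0$ and the mechanism is precisely lifting a function on $\mult{\res{F}}\times\mult{\res{F}}$ through the residue map.
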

\begin{proof}
By definition of the integral on $\mult{F}\times\mult{F}$ and a similar argument to example \ref{example_integral_over_mult_F} (i), we have \[\zeta(f^0,\chi,s)=\int_{\mult{\res{F}}}\int_{\mult{\res{F}}} f(u,v)\w_1(u)\w_2(v)\abs{u}^s\abs{v}^s\,\dmult{u}\dmult{v},\] which we denoted $\zeta_{\res{F}\times\res{F}}(f,\w_1\otimes\w_2,s)$ in remark \ref{remark_non_arch_product_*_functional_eqn}. That is, since we are only considering functions $f$ which lift from $\res{F}\times\res{F}$, the zeta integral over $\roi{F}\times\roi{F}$ reduces to a zeta integral over $\res{F}\times\res{F}$. All required results follow from that remark.
\end{proof}

\begin{remark}
The previous example highlights the importance of lifting the $^*$-transform up to $F$ in a similar way to how we lifted the Fourier transform. Then it may be possible to generalise this proposition to many more functions on $\roi{F}\times\roi{F}$ than simply those which lift from $\res{F}\times\res{F}$.
\end{remark}

\begin{remark}
Having calculated epsilon factors for the $^*$-transformation in section \ref{section_s_goes_to_2-s}, we have formulae for the \emph{two-dimensional epsilon factors} \[\ep_F(\chi,s)=\ep_*(\w_1,s)\ep_*(\w_2,s).\] For example, if $\w_1$ is ramified with conductor $r>0$ but $\w_2$ is unramified, then \[\ep_F(\chi,s)=q^{(\lceil{(r-d)/2}\rceil-\lceil{d/2}\rceil)(2-s)}\chi(t_1,1)^{\lceil{(r-d)/2}\rceil}q^{-r/2}\delta_{d-r}\rho_0(\w_1^{-1})\] where $d$ is the conductor of the additive character on $\res{F}$ used to define the $^*$-transform.
\end{remark}

There is another relation between zeta integrals on $F$ and $\res{F}$ which we now discuss; first we need a lemma:

\begin{lemma} \label{lemma_for_lifting_zeta_integral}
Let $g$ be a complex-valued function on $\res{F}$ and $s$ complex such that $g\abs{\cdot}^{2s}$ is integrable on $\mult{\res{F}}$. Let $w:\mult{\res{F}}\to\mathbb{Z}$ be the valuation; introduce
\begin{align*}
	g':\mult{\res{F}}\times\mult{\res{F}}&\to\Comp\\
	(x,y)&\mapsto g(\pi^{\min(w(x),w(y))-w(x)}x)\,\abs{xy}^s
	\end{align*}
Then $g'$ is integrable over $\mult{\res{F}}\times\mult{\res{F}}$, with integral \[\int\int g'(x,y)\,\dmult{x}\dmult{y}=\mu(\mult{\roi{\res{F}}})\frac{1+q^{-s}}{1-q^{-s}}\int g(x)\abs{x}^{2s}\dmult{x},\] where $\mu$ is the multiplicative Haar measure on $\mult{\res{F}}$.
\end{lemma}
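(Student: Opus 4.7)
The plan is to split the domain $\mult{\res{F}}\times\mult{\res{F}}$ according to the $\min$ in the definition of $g'$. Set $A=\{(x,y):w(x)\le w(y)\}$ and $B=\{(x,y):w(x)>w(y)\}$. On $A$ the exponent $\min(w(x),w(y))-w(x)$ vanishes, so $g'(x,y)=g(x)|xy|^s$; on $B$ it equals $w(y)-w(x)<0$, so writing $x=\pi^m v$ with $v\in\mult{\roi{\res{F}}}$ and $m=w(x)>w(y)$, the argument collapses to $\pi^{w(y)}v$, hence $g'(x,y)=g(\pi^{w(y)}v)|xy|^s$.

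On region $A$, I would apply Fubini and integrate out $y$ first, decomposing $\{y:w(y)\ge w(x)\}=\bigsqcup_{n\ge w(x)}\pi^n\mult{\roi{\res{F}}}$. The $y$-integral sums as a geometric series (requiring $\realpart{s}>0$) to $\mu(\mult{\roi{\res{F}}})|x|^s/(1-q^{-s})$, whence the $A$-contribution is
\[
\frac{\mu(\mult{\roi{\res{F}}})}{1-q^{-s}}\int g(x)|x|^{2s}\,\dmult{x}.
\]
On region $B$, for fixed $y$ I would parameterise $x=\pi^m v$ with $m>w(y)$ and $v\in\mult{\roi{\res{F}}}$, so that the inner $v$-integration becomes $\int_{\mult{\roi{\res{F}}}}g(\pi^{w(y)}v)\,\dmult{v}$ and the sum over $m>w(y)$ gives a geometric factor $q^{-s}|y|^s/(1-q^{-s})$. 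Then decomposing the remaining $y$-integration over level sets $\{w(y)=n\}$ and recognising
\[
\int g(x)|x|^{2s}\,\dmult{x}=\sum_{n\in\mathbb{Z}}q^{-2ns}\int_{\mult{\roi{\res{F}}}}g(\pi^n v)\,\dmult{v}
\]
identifies the $B$-contribution as
\[
\frac{q^{-s}\mu(\mult{\roi{\res{F}}})}{1-q^{-s}}\int g(x)|x|^{2s}\,\dmult{x}.
\]
Adding $A$ and $B$ produces the factor $(1+q^{-s})/(1-q^{-s})$ as claimed.

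The main obstacle is not any single estimate but the careful bookkeeping and justification of the interchange of sum and integral. The absolute integrability hypothesis on $g|\cdot|^{2s}$ means the inner double series $\sum_n q^{-2ns}\int_{\mult{\roi{\res{F}}}}|g(\pi^n v)|\,\dmult{v}$ converges, so Fubini applies on each level set once $\realpart{s}>0$ is assumed to make the auxiliary geometric series converge absolutely. Everything else reduces to routine multiplicative Haar manipulations.
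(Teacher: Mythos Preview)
Your argument is correct and follows essentially the same route as the paper: decompose $\mult{\res{F}}\times\mult{\res{F}}$ according to whether $w(x)\le w(y)$ or $w(x)>w(y)$, reduce each piece to a geometric series times $\int g(x)|x|^{2s}\,\dmult{x}$, and add. The paper phrases this as a double sum over level sets $w^{-1}(n)\times w^{-1}(m)$ split into $m<n$ and $m\ge n$, but the content is identical; your explicit remark that $\realpart{s}>0$ is needed for the auxiliary geometric series is a hypothesis the paper leaves implicit.
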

\begin{proof}
The integral of $g'$ over $\mult{\res{F}}\times\mult{\res{F}}$ is \[\sum_{n\in\mathbb{Z}}\sum_{m\in\mathbb{Z}} \int_{w^{-1}(n)}\int_{w^{-1}(m)} g(\pi^{\min(n,m)-m}x) q^{-s(n+m)-2}\,dxdy.\] Split the inner summation over $m<n$ and $m\ge n$ and interchange the order of the double summation $\sum_n\sum_{m<n}$; elementary manipulations complete the proof.
\end{proof}

\begin{definition}
Introduce a 'generalised residue map':
\begin{align*}
	\rho_2:&\;T^+\To\res{F}\\
	&(t_1^{i_1}t_2^{i_2}u,t_1^{j_1}t_2^{j_2}v)\mapsto\res{t_1^{\min(i_1,j_1)}t_2^{\min(i_2,j_2)}u}
	\end{align*}
where $u,v\in\mult{\ROI{F}}$ and $i_1,i_2,j_1,j_2\in\mathbb{Z}$.
\end{definition}

\begin{remark}
The map $\rho_2$, when restricted to $T$, factors through $K_2^t(F)$: \[\rho_2(t_1^iu,t_1^jv)=\partial\left( \min(i,j)\{t_1,t_2\}+\{t_1,v\}+\{u,t_2\}\right)\] where $i,j\in\mathbb{Z}$, $u,v\in\mult{\ROI{F}}$.
\end{remark}

$\rho_2$ lifts zeta integrals from $\res{F}$ to $F$:

\begin{proposition}
Let $\w$ be a quasi-character of $\mult{\res{F}}$, $s$ complex, and $g$ a complex-valued function on $\res{F}$ such that $g\,\w^2\,\abs{\cdot}^{2s}$ is integrable on $\mult{\res{F}}$; let $\chi=\w\circ\partial$. Then the zeta integral $\zeta(g\circ\rho_2,\chi,s)$ is well-defined and
\[\zeta(g\circ\rho_2,\chi,s)=\mu(\mult{\roi{\res{F}}})\frac{1+q^{-s-c}}{1-q^{-s-c}}\,\zeta_{\res{F}}(g,\w,2s+c),\] where $c\in\Comp$ is defined by $\w=\w_0\,\abs{\cdot}^{c}$ with $\w_0$ a character of $\mult{F}$ trivial on $\pi$.
\end{proposition}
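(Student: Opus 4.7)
The strategy is to unfold the definition of $\zeta(g\circ\rho_2,\chi,s)$, observe that after restriction to $T$ the integrand depends only on the pair of residues $(\res{x},\res{y})$, lift the integral down to $\mult{\res{F}}\times\mult{\res{F}}$, and then apply Lemma~\ref{lemma_for_lifting_zeta_integral}.

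For $(x,y)\in T$, I would write $x=t_1^iu$, $y=t_1^jv$ with $u,v\in\mult{\ROI{F}}$, so $i=w(\res{x})$, $j=w(\res{y})$, and the unit parts $\res{u},\res{v}$ lie in $\mult{\roi{\res{F}}}$. From the definition of $\rho_2$, $(g\circ\rho_2)(x,y)=g(\pi^{\min(i,j)-i}\res{x})$. To pin down the remaining factor, the plan is to combine the decomposition $K_2^t(F)=\mathbb{Z}\{t_1,t_2\}\oplus U$ with the formula $\chi(n\{t_1,t_2\}+u')=\chi_0(u')q^{-nc}$ (where $c$ is the exponent of $\w$), the remark $\abs{\frakt(x,y)}=\abs{x}\abs{y}$, and the factorisation $\w=\w_0\abs{\cdot}^c$ with $\w_0$ trivial on $\pi$, to obtain
\[\chi\circ\frakt(x,y)\,\abs{\frakt(x,y)}^s=\w_0(\res{x})\,\abs{x}^{s+c}\abs{y}^{s+c}.\]
Triviality of $\w_0$ on $\pi$ also gives $\w_0(\res{x})=\w_0(\pi^{\min(i,j)-i}\res{x})$, so the entire integrand on $T$ is the lift at $(0,0),(0,0)$ of
\[h(x',y')=(g\w_0)\bigl(\pi^{\min(w(x'),w(y'))-w(x')}x'\bigr)\,\abs{x'y'}^{s+c}\]
on $\mult{\res{F}}\times\mult{\res{F}}$.

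Exactly as in Example~\ref{example_integral_over_mult_F}(ii), the $\mult{F}\times\mult{F}$-integral of such a lift equals $\int\int h\,\dmult{x'}\dmult{y'}$, and $h$ has precisely the form treated by Lemma~\ref{lemma_for_lifting_zeta_integral} with $g$ replaced by $g\w_0$ and $s$ by $s+c$. Applying that lemma yields
\[\mu(\mult{\roi{\res{F}}})\,\frac{1+q^{-s-c}}{1-q^{-s-c}}\int g(x)\w_0(x)\abs{x}^{2(s+c)}\dmult{x},\]
and rewriting $\w_0\abs{\cdot}^c=\w$ identifies the remaining one-dimensional integral with $\zeta_{\res{F}}(g,\w,2s+c)$. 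The integrability hypothesis on $g\w^2\abs{\cdot}^{2s}$ is equivalent to that required by Lemma~\ref{lemma_for_lifting_zeta_integral}, ensuring well-definedness throughout. The main hurdle is the clean identification of $\chi\circ\frakt(x,y)\abs{\frakt(x,y)}^s$ in terms of residues using the $K_2^t$-structure; the remainder is a direct appeal to Lemma~\ref{lemma_for_lifting_zeta_integral}.
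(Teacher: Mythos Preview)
Your proposal is correct and follows essentially the same route as the paper: both compute the integrand on $T$ explicitly, use $\w_0(\pi)=1$ to rewrite $\chi\circ\frakt(x,y)\abs{\frakt(x,y)}^s$ as $(g\w_0)(\pi^{\min(i,j)-i}\res{x})\abs{\res{x}\res{y}}^{s+c}$ up to the $g$-factor, recognise the result as the lift at $(0,0),(0,0)$ of a function on $\mult{\res{F}}\times\mult{\res{F}}$, and then invoke Lemma~\ref{lemma_for_lifting_zeta_integral} with $g\w_0$ in place of $g$ and $s+c$ in place of $s$. The only cosmetic difference is that the paper carries the factors $\abs{x}^{-1}\abs{y}^{-1}$ explicitly and appeals to Appendix~\ref{appendix_product_integration}, whereas you pass directly to the multiplicative integral via the product analogue of Example~\ref{example_integral_over_mult_F}(ii); these are the same step.
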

\begin{proof}
For $(x,y)\in T$,
\begin{align*}
	g\circ\rho_2(x,y)\,&\chi\circ\mathfrak{t}(x,y)\,\abs{\mathfrak{t}(x,y)^s}\,\abs{x}^{-1}\,\abs{y}^{-1}\\
	&=g(\pi^{\min(w(\res{x}),w(\res{y}))-w(\res{x})}x)\,\w(\res{x}\pi^{w(\res{y})})\,\abs{\res{xy}}^{s-1}\\
	&=g(\pi^{\min(w(\res{x}),w(\res{y}))-w(\res{x})}x)\,\w_0(x)\,\abs{\res{xy}}^{s+c-1}\\
	&=g(\pi^{\min(w(\res{x}),w(\res{y}))-w(\res{x})}x)\,\w_0(\pi^{\min(w(\res{x}),w(\res{y}))-w(\res{x})}x)\,\abs{\res{xy}}^{s+c-1},\end{align*}
so that $(x,y)\mapsto g\circ\rho_2(x,y)\,\chi\circ\mathfrak{t}(x,y)\,\abs{\mathfrak{t}(x,y)^s}\,\abs{x}^{-1}\,\abs{y}^{-1}$ is the lift of \[(u,v)\mapsto g(\pi^{\min(w(u),w(v))-w(v)}u)\,\w_0(\pi^{\min(w(u),w(v))-w(u)}u)\,\abs{uv}^{s+c-1}\] at $(0,0),(0,0)$.

The result now follows from the previous lemma and appendix \ref{appendix_product_integration}.
\end{proof}

This is enough to deduce analytic continuation of some zeta functions:

\begin{corollary}
Let $\w$ be a quasi-character of $\mult{\res{F}}$, $L(\w,s)$ the associated $L$-function, and $g$ a Schwartz-Bruhat function on $\res{F}$; let $\chi=\w\circ\partial$. Then
\begin{enumerate}
\item For $s$ complex of sufficiently large real part, the zeta integral $\zeta(g\circ\rho,\chi,s)$ is well-defined.
\item The holomorphic function $\zeta(g\circ\rho,\chi,s)/(L(\w,s)(1-\w(\pi)q^{-s})^{-1})$, initially defined for $\realpart{s}$ sufficiently large, has analytic continuation to an entire function.
\end{enumerate}
\end{corollary}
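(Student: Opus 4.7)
The plan is to deduce both parts directly from the preceding proposition together with the classical analytic-continuation property (AC) of one-dimensional local zeta integrals on $\res{F}$.

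For part (i), I would observe that a Schwartz--Bruhat function $g$ is bounded with compact support in $\res{F}$, so $g\,\w^2\,\abs{\cdot}^{2s}$ is $\dmult{x}$-integrable on $\mult{\res{F}}$ as soon as $\realpart{s}$ is large enough to handle integrability near zero. The hypothesis of the preceding proposition is then fulfilled, and we obtain the explicit formula
\[\zeta(g\circ\rho_2,\chi,s)\;=\;\mu(\mult{\roi{\res{F}}})\,\frac{1+q^{-s-c}}{1-q^{-s-c}}\,\zeta_{\res{F}}(g,\w,2s+c).\]

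For part (ii), I would write $\zeta_{\res{F}}(g,\w,2s+c)=L(\w,2s+c)\,Z_{\res{F}}(g,\w,2s+c)$, with $Z_{\res{F}}$ entire by (AC). The normalisation $\w_0(\pi)=1$ built into the preceding proposition forces $\w(\pi)=q^{-c}$, hence $1-\w(\pi)q^{-s}=1-q^{-s-c}$; this exactly cancels the apparent pole factor $1/(1-q^{-s-c})$ when we multiply by $1-\w(\pi)q^{-s}$ (i.e.\ divide by $L(\w,s)(1-\w(\pi)q^{-s})^{-1}$). After this cancellation the quotient in question becomes
\[\mu(\mult{\roi{\res{F}}})\,\frac{(1+q^{-s-c})\,L(\w,2s+c)}{L(\w,s)}\,Z_{\res{F}}(g,\w,2s+c),\]
so it suffices to verify that the meromorphic prefactor $(1+q^{-s-c})L(\w,2s+c)/L(\w,s)$ is entire.

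To finish I would split on ramification of $\w$. If $\w$ is ramified, both $L$-factors equal $1$ and the surviving factor $1+q^{-s-c}$ is already a polynomial. If $\w$ is unramified (so $\w_0\equiv 1$), then $L(\w,s)=(1-q^{-s-c})^{-1}$ and $L(\w,2s+c)=(1-q^{-2s-2c})^{-1}$, and the factorisation $1-q^{-2s-2c}=(1-q^{-s-c})(1+q^{-s-c})$ makes the entire prefactor collapse to the constant $1$. Thus in either case the ratio is entire, completing the proof. The computation is essentially formal once the preceding proposition is in hand; the only point requiring care is tracking the two $L$-factors, one at $s$ and one at $2s+c$, and the crucial cancellation is dictated by the choice $\w_0(\pi)=1$ in the decomposition of $\w$.
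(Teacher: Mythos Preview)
Your proposal is correct and follows essentially the same route as the paper: the paper's proof is the one-line remark that the corollary follows from the preceding proposition, the analytic-continuation property (AC) for local zeta integrals on $\res{F}$, and the identity $\w(\pi)=q^{-c}$, and you have simply unpacked that remark in full, including the explicit ramified/unramified case split that makes the cancellation of $L$-factors visible.
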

\begin{proof}
This follows from the corresponding results for local zeta functions on $\res{F}$, the previous proposition, and the identity $\w(\pi)=q^{-c}$ where $c$ is as in the previous proposition.
\end{proof}

Throughout it has been useful for $\chi\circ\mathfrak{t}$ to factor through the residue map $T\mapsto\mult{\res{F}}\times\mult{\res{F}}$. In the next two examples we considers some situations in which this happens. Let $L$, a two-dimensional local field, be a finite abelian extension of $F$ and let $\chi$ be a character of $K_2^t(F)$ which vanishes on $N_{L/F}K_2^t(L)$. 

\begin{example}\label{example_chars_of_extensions_1}
Suppose $\res{L}/\res{F}$ is separable with $|\res{L}:\res{F}|=|L:F|$.

Then $\partial$ induces a surjection $K_2^t(F)/N_{L/F}K_2^t(L)\to\mult{\res{F}}/N_{L/F}\mult{\res{L}}$. Further, the separability assumption implies $\res{L}/\res{F}$ is an abelian extension of local fields so that $|\mult{\res{F}}/N_{L/F}\mult{\res{L}}|=|\res{L}:\res{F}|=|L:F|=|K_2^t(F)/N_{L/F}K_2^t(L)|$; so the induced surjection is an isomorphism. Therefore $\chi$ factors through $\partial$.
\end{example}

\begin{example}\label{example_chars_of_extensions_2}
Suppose $\res{L}=\res{F}$, $p\not|\,|L:F|$, and $t_2\in N_{L/F}\mult{L}$ ("a totally tamely ramified extension in the second parameter").

Then $(x,y)\in T$ implies $\mathfrak{t}(x,y)=\symb{t_1}{\Theta(y)}\mod{N_{L/F}K_2^t(L)}$ (see \cite{Fesenko-multidimensional-local-class-field-theory}), where $\Theta$ is the projection \[\Theta:\mult{F}=\langle t_1\rangle\times\langle t_2\rangle\times\mult{\mathbb{F}}_q\times V_F\mapsto\mult{\mathbb{F}}_q.\] Here $V_F$ is the two-dimensional groups of principal units of $F$.

Therefore there exists a tamely ramified quasi-character $\w$ of $\mult{\res{F}}$ such that $\chi\circ\mathfrak{t}(x,y)=\w(\res{y})$ for $(x,y)\in T$.
\end{example}

These examples show that our functional equation applies to all 'sufficiently unramified' characters. The proof of the functional equation in [AoAS] is valid whenever all relevant functions are integrable, and proposition \ref{prop_non_arch_higher_functional_eqn} is certainly a special case. However, it appears that if $\chi$ is ramified then certain interesting functions are not integrable. See also section $\ref{section_further_work}$.
%
%
%
%
\subsection{Archimedean case}
Now suppose that $F$ is a two-dimensional archimedean local field; that is, $\Gamma=\mathbb{Z}$ and $F$ is complete with respect to the discrete valuation $\nu$ with residue field $\res{F}$ an archimedean local field. The classification of complete discrete valuation fields (see eg \cite[chapter II.5]{FV}) implies that $F$ is isomorphic to a field of Laurent series $\Comp((t))$ or $\Real((t))$, where we write $t=t(1)$.

The correct way to use topological $K$-groups for class field theory and zeta integrals of such fields is not clear, so we content ourselves with making a few remarks about generalising the results in the non-archimedean case without appealing to $K$-groups.

Given Schwartz functions $f,g$ on $\res{F}$ for which $f^*,g^*$ are also Schwartz, and $\w$ a quasi-character of $\mult{\roi{F}}$ which factors through the residue map $\mult{\roi{F}}\to\mult{\res{F}}$, proposition \ref{proposition_archimedean_functional_equation} implies that \[\int^{\mult{F}} f^{0,0}(x)\;\w(x)\abs{x}^s\,\Char{\mult{\roi{F}}}(x)\,\dmult{x} \int^{\mult{F}} (g^*)^{0,0}(x)\;\w(x)^{-1}\abs{x}^{2-s}\,\Char{\mult{\roi{F}}}(x)\,\dmult{x}\] is invariant under interchanging $f$ and $g$. There is an analogous result for integrals over $\mult{\roi{F}}\times \mult{\roi{F}}$.

An extension of $F$ cannot be wildly ramified in any sense, and so by analogy with examples \ref{example_chars_of_extensions_1} and \ref{example_chars_of_extensions_2} we expect arithmetic characters on $\mult{\roi{F}}$ (or $\mult{\roi{F}}\times\mult{\roi{F}}$) to lift from $\mult{\res{F}}$. Hence this functional equation may be satisfactory in the archimedean case.

Indeed, in the case $F=\Comp((t))$, the finite abelian extensions of $F$ have the form $\Comp((t^{1/n}))$ for natural $n$. A character attached to such an extension is surely a purely imaginary power of $\abs{\cdot}$; this lifts to $\mult{\roi{F}}$ from $\mult{\res{F}}$.

If $F=\Real((t))$, then $F$ has maximal abelian extension $\Comp((t^{1/2}))$, with subextensions $\Real((t^{1/2}))$ and $\Comp((t))$. A character attached to the extension $\Comp((t^{1/2}))$ is $\mult{\roi{F}}\to\{\pm 1\}:x\mapsto \mbox{sg}(\res{x})$, which again lifts from $\mult{\res{F}}$.
%
%
%
%

\section{Further work}\label{section_further_work}
We discuss some topics related to the theory of this paper.

\subsection*{Ramified zeta integrals}
The proof of the functional equation in section \ref{section_two_dim_zeta_integrals} can surely be extended to a wider class of functions and characters. In particular there should be a theory for ramified characters. The results may have application in the ramification theory of two-dimensional local fields \cite{Zhukov-1} \cite{Zhukov-2}.

The author is certain that the local functional equation presented in this paper can be significantly strengthened using only the results already present in sections 1 to 6.

\subsection*{Wiener and Feynman measure; quantum field theory}
The field $\Real(t)$, and certain subspaces of $\Real((t))$, may be identified with spaces of functions. In particular, $t\Real[t]$ may be identified with a subspace of the space of continuous paths $[0,1]\to\Real$ which vanish at $0$ ie. Wiener space. It would be interesting to understand relations between Wiener measure and our measure.

Similarly, the subspace of $\Comp((t))$ consisting of Laurent series which converge on the punctured unit disc in the complex plane define continuous loops $S^1\to\Comp$. A comparison of the measure in this case, in conjunction with the real case above, may provide insight into Feynman measure on such spaces.

The values of divergent integrals in quantum field theory, after renormalisation, appear as epsilon factors in our local zeta integrals (example \ref{example_PV_archimedean}). The duality provided by a functional equation would provide arithmetic arguments for the values of such integrals. It would be very interesting to investigate whether this arithmetic value coincides with the physical value.
 
\subsection*{Other residue fields}
The philosophy followed in this paper is that any reasonable theory (eg. integration, measure theory, harmonic analysis) should 'lift' from $\res{F}$; indeed, most proofs reduce a problem on $F$ to the analogous problem on $\res{F}$, where the result is known.

If $\res{F}$ is not a local field, but instead is an infinite field with the discrete topology, then the techniques of this paper modify to lift the counting measure on $\res{F}$ to $F$. In particular, if $F$ is a complete discrete valuation field with residue field $\mathbb{F}_p^{\mbox{\tiny alg}}$ then there is a $\Comp(X)$-valued integration theory which specialises to the standard locally compact theory for any subfield $F_0\le F$ with finite residue field by taking $X=\abs{\res{F_0}}^{-1}$.

\subsection*{$F^n$ and $\mbox{GL}_n(F)$}
As discussed in appendix \ref{appendix_product_integration}, the space of Haar integrable functions (and the integral) on $\res{F}^2$ lifts to $F^2$. This space of functions is not closed under the action of $\mbox{GL}_2(F)$. There exists a different class of integrable functions which is closed under the action, and for which the identity \[\int^{F^2}f(\tau x)\,dx =\abs{\det{\tau}}^{-1}\int^{F^2}f(x)\,dx\] holds for appropriate $f$ and $\tau\in\mbox{GL}_2(F)$. See \cite{Morrow_2} for details.

Similar results hold for $F^n$, for any $n\ge 1$. Just as we deduced the existence of an invariant measure on $\mult{F}$, the results for $F^{n^2}$ imply the existence of a translation-invariant measure and integral on $\mbox{GL}_n(F)$. See \cite{Morrow_2}.

\subsection*{Non-linear change of variables and Fubini's theorem}
For applications in representation theory of algebraic groups over $F$, it is essential that the invariant measure on $F^2$ behaves well under non-linear changes of variables. For example, if $f$ is a suitable $\CG$-valued function on $F^2$ and $h$ is a polynomial with coefficients in $F$, then it is expected that \[\int^F\int^F f(x,y-h(x))\,dxdy=\int^F\int^F f(x,y-h(x))\,dydx=\int^F\int^F f(x,y)\,dydx.\]

However, recent work of the author's \cite{Morrow_3} show that this identity can fail if the local field $\res{F}$ has finite characteristic $p$. The problem may be related to the fact that the the $p^{\mbox{\tiny th}}$ powers of $\res{F}$ have zero measure in contrast to the classical case where the residue field is finite and the $p^{\mbox{\tiny th}}$ powers are therefore the entire field. Investigation of this 'measure-theoretic imperfectness' may lead to refinement of the measure.
%
%
%

\begin{appendix}
\section{Rings generated by certain classes of sets}

Certain set manipulations used in this section are inspired by [AoAS] and \cite{Halmos}.

\begin{definition}
Let $\mathcal{A}$ be a collection of subsets of some set $\Omega$.

$\mathcal{A}$  is said to be a $\emph{ring}$ if it is closed under taking differences and finite unions.

$\mathcal{A}$ is said to be a $\emph{d-class}$ if it contains the empty set and satisfies the following: $A,B$ in $\mathcal{A}$ with non-trivial intersection implies $\mathcal{A}$ contains $A\cap B$ and $A\cup B$. Elements of a d-class are called d sets.
\end{definition}

\begin{example} The following are examples of d-classes.
\begin{enumerate}
\item The collection of finite intervals of $\Real$, open on the right and closed on the left, together with the empty set.

\item The collection of translates of some chain of subgroups of a group, together with the empty set.
\end{enumerate}
\end{example}

We fix for the remainder of this appendix a d-class on some set.

\begin{lemma}\label{lemma_disjoint}
Let $A_i$ be d sets, for $i=1,\dots,n$. Then there exist disjoint d sets $B_j$, $j=1,\dots,m$ such that each $B_j$ is a union of some of the $A_i$ and such that $\Union iA=\DisjUnion jB$
\end{lemma}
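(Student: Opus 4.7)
The plan is to reduce to the following cleaner statement: if $A_1, \ldots, A_n$ are d-sets which are \emph{pairwise connected} in the sense that the graph on $\{1,\dots,n\}$ with edges $\{i,j\}$ whenever $A_i \cap A_j \neq \emptyset$ is connected, then $A_1 \cup \cdots \cup A_n$ is itself a d-set. The lemma follows immediately by decomposing $\{A_1, \dots, A_n\}$ into the connected components of this intersection graph and taking each $B_j$ to be the union of the $A_i$ belonging to one component.

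To prove the auxiliary statement, I would argue by induction on $n$. In the connected graph on $\{1,\dots,n\}$ pick a spanning tree and order the vertices $A_{i_1}, A_{i_2}, \dots, A_{i_n}$ so that for each $l \geq 2$ the vertex $A_{i_l}$ is adjacent (in the tree) to some $A_{i_{l'}}$ with $l' < l$. Set $U_l = A_{i_1} \cup \cdots \cup A_{i_l}$. Then $U_1 = A_{i_1}$ is a d-set, and assuming $U_l$ is a d-set, the choice of ordering gives $A_{i_{l+1}} \cap A_{i_{l'}} \neq \emptyset$ for some $l' \leq l$, hence $A_{i_{l+1}} \cap U_l \neq \emptyset$; the d-class axiom then yields that $U_{l+1} = A_{i_{l+1}} \cup U_l$ is a d-set.

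It remains to verify that the $B_j$ arising from distinct components are disjoint. If $B_j \cap B_{j'} \neq \emptyset$ for some $j \neq j'$, then by distributivity of intersection over union there exist $A_i \subseteq B_j$ and $A_{i'} \subseteq B_{j'}$ with $A_i \cap A_{i'} \neq \emptyset$, placing $i$ and $i'$ in the same connected component, a contradiction. Each $B_j$ is by construction a union of some of the $A_i$, so all requirements of the lemma are met.

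There is no serious obstacle: the only slightly non-obvious point is that the d-class axiom (stated for pairs) propagates to finite connected families, which is handled by the spanning-tree ordering above. The whole argument is essentially a bookkeeping exercise on the connected components of the intersection graph.
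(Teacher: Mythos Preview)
Your proof is correct and is essentially the argument the paper has in mind. The paper records only ``A simple induction on $n$''; your connected-component and spanning-tree framing is one natural way to organise that induction, and the direct version (at step $n$, merge $A_n$ together with whichever of the already-constructed $B_j$ it meets) produces exactly the same $B_j$'s.
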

\begin{proof}
A simple induction on $n$.
\end{proof}

Informally, the result states that any finite union of d sets may be refined to a disjoint union.

\begin{definition}
A set of the form $A\less\DisjUnion iA$ for some d sets $A,A_1,\dots,A_n$, with $A_i\subseteq A$ for each $i$, is said to be a $\emph{dd}$ set.
\end{definition}

\begin{remark} \label{rem_dd}
\mbox{}
\begin{enumerate}
\item Consider a set of the form $X=A\less\Union iA$ for d sets $A,A_1\dots,A_m$, where we make no assumption on disjointness or inclusions. Then $X=A\less\bigcup_iA \cap A_i$; lemma \ref{lemma_disjoint} implies that $X$ is a dd set.
\item The identity $ (A\less\DisjUnion iA) \cap (B\less\DisjUnion jB) = (A\cap B)\less(\DisjUnion iA \cup\DisjUnion jB) $ and lemma \ref{lemma_disjoint} imply that dd sets are closed under finite intersection.
\end{enumerate}
\end{remark}

\begin{definition}
A finite disjoint union of dd sets is said to be a $\emph{ddd}$ set.
\end{definition}

\begin{lemma}\label{lemma_difference_of_dd_sets}
The difference of two dd sets is a ddd set.
\end{lemma}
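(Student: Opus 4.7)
The plan is to decompose $X \setminus Y$ explicitly as a disjoint union of dd sets. Write $X = A \setminus \bigsqcup_{i=1}^m A_i$ and $Y = B \setminus \bigsqcup_{j=1}^n B_j$ with $A_i \subseteq A$ and $B_j \subseteq B$ the disjoint d sets from the respective representations. Taking complements, $Y^c = B^c \sqcup \bigsqcup_j B_j$, so
\[
X \setminus Y \;=\; (X \setminus B) \;\sqcup\; \bigsqcup_{j=1}^n (X \cap B_j).
\]
The pieces on the right are pairwise disjoint: the first avoids $B$ entirely while each $X \cap B_j$ lies inside $B$, and the $B_j$ themselves are disjoint.

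For the second type of summand, any d set is vacuously a dd set (take the subtracted family to be empty), so remark \ref{rem_dd}(ii) applied to the two dd sets $X$ and $B_j$ shows that $X \cap B_j$ is a dd set. For the first summand, observe that $X \setminus B = A \setminus (B \cup \bigcup_i A_i)$, which expresses $X \setminus B$ as a d set minus a finite union of d sets, with no disjointness or inclusion assumed among them. This is precisely the situation handled by remark \ref{rem_dd}(i), which immediately produces a dd representation. Assembling the three ingredients yields the required ddd expression for $X \setminus Y$.

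The proof carries no real obstacle beyond bookkeeping; once one decomposes $Y^c$ as $B^c$ together with the $B_j$, both types of summand fall directly into the two parts of remark \ref{rem_dd}, and the argument is essentially forced.
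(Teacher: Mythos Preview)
Your proof is correct and follows essentially the same route as the paper: both arguments establish the identity
\[
X\setminus Y=(A\setminus(B\cup\textstyle\bigcup_i A_i))\sqcup\bigsqcup_j(X\cap B_j),
\]
and then appeal to the two parts of remark \ref{rem_dd} to recognise each piece as a dd set. The paper states the decomposition as a single set identity and invokes remark \ref{rem_dd} without separating the cases, while you make the use of parts (i) and (ii) explicit, but the substance is identical.
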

\begin{proof}
For arbitrary sets $A,A_0,B,(B_j)_j$ with $B_j\subseteq B$, the identity
\[(A\less A_0)\less(B\less\DisjUnion jB)=(A\less(B\cup A_0))\sqcup\bigsqcup_j((B_j\cap A)\less A_0)\]
is easily verified. Replace $A_0$ by a disjoint union of d sets and use remark \ref{rem_dd} to complete the proof.
\end{proof}

\begin{proposition}
The difference of two ddd sets is a ddd set. The union of two ddd sets is a ddd set.
\end{proposition}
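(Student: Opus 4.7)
The plan is to establish the difference statement first, by reducing via iteration to lemma~\ref{lemma_difference_of_dd_sets}, and then to deduce the union statement as a simple consequence.

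Let $X=\bigsqcup_{i=1}^m X_i$ and $Y=\bigsqcup_{j=1}^n Y_j$ be ddd sets, with each $X_i,Y_j$ a dd set. Since the $X_i$ are disjoint, one has $X\setminus Y=\bigsqcup_{i=1}^m (X_i\setminus Y)$, so it suffices to show that for each fixed dd set $A$ and each ddd set $Y$, the difference $A\setminus Y$ is ddd. I would prove this by induction on $n$. The base case $n=1$ is precisely lemma~\ref{lemma_difference_of_dd_sets}. For the inductive step write
\[
A\setminus(Y_1\sqcup\cdots\sqcup Y_n)=\bigl(A\setminus(Y_1\sqcup\cdots\sqcup Y_{n-1})\bigr)\setminus Y_n.
\]
By hypothesis the inner bracket is a ddd set, say $\bigsqcup_k W_k$ with each $W_k$ a dd set; hence the whole expression equals $\bigsqcup_k(W_k\setminus Y_n)$, and each $W_k\setminus Y_n$ is ddd by lemma~\ref{lemma_difference_of_dd_sets}. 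A disjoint union of ddd sets is, by the definition of ddd, itself a disjoint union of dd sets and hence ddd, completing the induction.

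For the union, I would simply write $X\cup Y=X\sqcup(Y\setminus X)$. By the part just proved, $Y\setminus X$ is ddd; since $X$ is ddd and the two pieces are disjoint, the union is once more a disjoint union of dd sets, hence ddd.

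The only real subtlety will be bookkeeping the disjointness at each stage, in particular checking that when a ddd set is rewritten as $\bigsqcup_k W_k$ and then each $W_k$ is broken into its ddd decomposition, the resulting pieces remain pairwise disjoint across $k$ — which follows immediately from the disjointness of the $W_k$. No other step requires anything beyond lemma~\ref{lemma_difference_of_dd_sets} and the definition of a ddd set.
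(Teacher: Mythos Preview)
Your proof is correct, and it follows a somewhat different route from the paper's. For the difference, the paper writes $X\setminus Y=\bigsqcup_i\bigcap_j(D_i\setminus E_j)$ directly, invokes lemma~\ref{lemma_difference_of_dd_sets} on each $D_i\setminus E_j$, and then uses distributivity together with the closure of dd sets under finite intersection (remark~\ref{rem_dd}(ii)) to conclude that each $\bigcap_j(D_i\setminus E_j)$ is ddd. Your inductive argument sidesteps the need for remark~\ref{rem_dd}(ii) entirely, peeling off one $Y_j$ at a time and only ever applying lemma~\ref{lemma_difference_of_dd_sets}; this is more elementary, at the cost of an extra layer of induction bookkeeping. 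For the union, the paper splits $X\cup Y$ into three disjoint pieces $X\cap Y$, $X\setminus Y$, $Y\setminus X$, again using intersection closure for the first piece; your two-piece decomposition $X\sqcup(Y\setminus X)$ is cleaner and again avoids any appeal to intersections. Both approaches are short, but yours uses strictly less of the preceding machinery.
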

\begin{proof}
The difference of two ddd sets may be written as a finite disjoint union of sets of the form $\bigcap E_i\less D_i$, a finite intersection of differences of dd sets; such a set is an intersection of ddd sets by lemma \ref{lemma_difference_of_dd_sets}. By deMorgan's laws, this may be rewritten as a disjoint union of intersections of dd sets. Hence the difference of two ddd sets is again a ddd set.

Let $D_1,\dots,D_n$ and $E_1,\dots,E_m$ be disjoint dd sets. Then $\DisjUnion iD\cup\DisjUnion jE$ is the disjoint union of the following three sets:
\begin{align*}
    W_1&=\DisjUnion iD\cap\DisjUnion jE\\
    W_2&=\DisjUnion iD\less\DisjUnion jE\\
    W_3&=\DisjUnion jE\less\DisjUnion iD\mbox{.}
\end{align*}
$W_2$ and $W_3$ are ddd sets by lemma \ref{lemma_disjoint}. Further, $W_1=\bigsqcup_{i,j}(D_i\cap E_j)$ is a ddd set by remark \ref{rem_dd}.
\end{proof}

\begin{proposition}\label{prop_ddd}
The collection of all ddd sets is a ring; indeed, it is the ring generated by the d-class.
\end{proposition}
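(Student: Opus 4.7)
The plan is to deduce both assertions almost immediately from the preceding proposition together with the easy observation that every d set is already a ddd set. There is essentially no obstacle: the work has been done in the lemmas establishing closure of dd/ddd sets under intersection, difference, and pairwise disjoint union.

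First I would verify that the collection of ddd sets, call it $\A$, is a ring. Closure under differences of two ddd sets is exactly the first sentence of the previous proposition. For finite unions, the previous proposition gives pairwise unions, and an easy induction on the number of terms extends this to any finite union (equivalently, one can write a finite union as a finite iteration of pairwise unions). Thus $\A$ is closed under finite differences and finite unions, so it is a ring.

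Next I would show that $\A$ contains the d-class. Given any d set $A$, write $A=A\less\emptyset$; since the empty set is itself a d set (by the definition of a d-class) and is contained in $A$, this presents $A$ as a dd set, and a single dd set is a ddd set. Hence every d set lies in $\A$.

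Finally, to identify $\A$ with the ring $\mathcal{R}$ generated by the d-class, I would argue by two inclusions. Since $\A$ is a ring containing the d-class, $\mathcal{R}\subseteq\A$ by minimality of $\mathcal{R}$. Conversely, any ring containing the d-class must contain all sets of the form $A\less\bigsqcup_i A_i$ (finite difference of d sets), hence all dd sets, and then all finite disjoint unions of such, hence all ddd sets; thus $\A\subseteq\mathcal{R}$. Combining the inclusions gives $\A=\mathcal{R}$, completing the proof.
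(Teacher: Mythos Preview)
Your argument is correct and follows exactly the approach intended by the paper, whose proof is simply the one-liner ``This is the content of the previous result.'' You have merely spelled out the details: closure under differences and unions comes from the preceding proposition, every d set is trivially a ddd set, and minimality follows since any ring containing the d-class must contain all dd and hence all ddd sets.
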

\begin{proof}
This is the content of the previous result.
\end{proof}

\section{$\CG$-valued holomorphic functions}
We briefly explain the required theory of holomorphic functions from the complex plane to $\CG$, though $\CG$ could be replaced with an arbitrary complex vector space.

\begin{definition}
Suppose $f$ is a $\CG$-valued function defined on some open subset of the complex plane. We shall say that $f$ is holomorphic at a point of $U$ if and only if, in some neighbourhood $U_0$ of this point, \[f(z)=\sum_{i=1}^nf_i(z)p_i,\] for some $f_1,\dots,f_n$, complex-valued holomorphic functions of $U_0$, and $p_1,\dots,p_n$, elements of $\CG$.
\end{definition}

Although the definition of holomorphicity is a local one, we can find a global representation of any such function on a connected set:
\begin{proposition}
Let $(p_i)_{i\in I}$ be any basis for $\CG$ over $\Comp$, and let $(\pi_i)_{i\in I}$ be the associated coordinate projections to $\Comp$. Let $f$ be a $\CG$-valued holomorphic function on some open subset $U$ of $\Comp$. Then
\begin{enumerate}
\item $\pi_i\circ f$ is a complex-valued holomorphic (in the usual sense) function of $U$.
\item If $U$ is connected then there is a finite subset $I_0$ of $I$ and complex-valued holomorphic functions $f_i$, for $i\in I_0$, of $U$ such that \[f(x)=\sum_{i\in I_0}f_i(z)p_i\] for all $z\in U$.
\end{enumerate}
\end{proposition}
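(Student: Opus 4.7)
The plan is to prove (i) by a purely local calculation and then deduce (ii) by combining (i) with the identity theorem for ordinary complex-valued holomorphic functions, exploiting the connectedness hypothesis.

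For (i) I would fix an index $i \in I$ and a point $z_0 \in U$. By the definition of $\CG$-valued holomorphicity there is a neighborhood $U_0$ of $z_0$ and a representation $f|_{U_0} = \sum_{j=1}^n g_j\, q_j$ with each $g_j$ ordinary holomorphic on $U_0$ and each $q_j \in \CG$. Expanding $q_j = \sum_k c_{jk} p_k$ in the basis (a finite expansion for each $j$) and using that $\pi_i$ is $\Comp$-linear gives $(\pi_i \circ f)(z) = \sum_{j=1}^n c_{ji}\, g_j(z)$ on $U_0$, a finite $\Comp$-linear combination of holomorphic functions, hence holomorphic at $z_0$. Since $z_0$ was arbitrary, $\pi_i \circ f$ is holomorphic on $U$.

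For (ii) I would proceed as follows. Pick any point of $U$ and a neighborhood $V \subseteq U$ on which $f$ admits a finite representation $f|_V = \sum_{j=1}^n g_j\, q_j$. Let $S \subset I$ be the finite set of indices $i$ such that $\pi_i(q_j) \neq 0$ for at least one $j$. For $i \notin S$ the computation from (i) shows $\pi_i \circ f \equiv 0$ on $V$; since $\pi_i \circ f$ is holomorphic on the connected set $U$ and vanishes on the nonempty open set $V$, the identity theorem forces $\pi_i \circ f \equiv 0$ on all of $U$. Now set $I_0 = \{i \in I : \pi_i \circ f \not\equiv 0 \text{ on } U\} \subseteq S$, which is therefore finite, and put $f_i = \pi_i \circ f$ for $i \in I_0$, holomorphic by (i). For every $z \in U$ the element $f(z) \in \CG$ admits the finite basis expansion $f(z) = \sum_{i \in I} \pi_i(f(z))\, p_i$; since $\pi_i(f(z)) = 0$ for $i \notin I_0$, this collapses to $f(z) = \sum_{i \in I_0} f_i(z)\, p_i$, as required.

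The only real obstacle is the passage from the local finiteness of the representation to global finiteness of the set of nonzero components; this is precisely where the identity theorem and the connectedness of $U$ enter. Without connectedness, distinct components could be nonzero on distinct components of $U$, and the conclusion would fail; with it, the finite index set $S$ arising from one neighborhood automatically dominates $I_0$ globally.
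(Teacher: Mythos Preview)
Your proof is correct and follows essentially the same approach as the paper's: establish (i) by expanding the local representation in the basis $(p_i)$ to see each $\pi_i\circ f$ is locally a finite $\Comp$-linear combination of holomorphic functions, then deduce (ii) by using the identity theorem on connected $U$ to show that the finite index set arising from one local representation already contains all globally nonzero components. The paper's write-up is nearly identical to yours, differing only in minor notational choices.
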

\begin{proof}
Let us suppose that \[f(z)=\sum_{j=1}^nf_j(z)q_j\tag{$\ast$}\] for all $z$ in some open $U_0\subset U$, where the $f_j$ are complex valued holomorphic functions of $U_0$ and $q_1,\dots,q_n\in\CG$. Then each $q_j$ is a linear sum (with complex coefficients) of finitely many $p_i$; therefore there is finite $I_0\subset I$ such that $f(z)=\sum_{i\in I_0} f_i(z)p_i$ for all $z\in U_0$, where each $f_i$ is a sum of finitely many $f_j$. So for any $i\in I$, \[\pi_i\circ f|_{U_0}=\begin{cases}f_i & i\in I_0\\ 0 & i\notin I_0 \end{cases}\] therefore $\pi_i\circ f$ is holomorphic on $U_0$.

But $f$ is holomorphic, so each point of $U$ has an open neighbourhood where $f$ can be written as in ($\ast$); therefore $\pi_i\circ f$ is holomorphic on all of $U$. This proves (i).

(ii) follows from (i) as soon as it is known that there are only finitely many $i$ in $I$ for which $\pi_i\circ f$ is not identically zero on $U$. But the identity theorem of complex analysis implies that if $\pi_i\circ f$ is not identically zero on $U$, then it is not identically zero on any open set $U_0\subset U$. So choose $U_0$ as at the starts of the proof and write $f|_{U_0}$ as in ($\ast$); if $\pi_i\circ f$ is not identically zero on $U_0$, then $i\in I_0$. So for all $z\in U$, \[f(z)=\sum_{i\in I_0}\pi_i\circ f(z)\,p_i.\]
\end{proof}

Although it is very easy to prove, the identity theorem here is fundamental, for else we would not be assured of the uniqueness of analytic continuations:
\begin{proposition}
Suppose that $f$ is a $\CG$-valued holomorphic function on some connected open subset $U$ of $\Comp$. Suppose that the zeros of $f$ have a limit point in $U$; then $f$ is identically zero on $U$. 
\end{proposition}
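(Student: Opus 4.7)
The plan is to reduce the statement to the classical identity theorem for complex-valued holomorphic functions, by invoking the global representation provided by the previous proposition.

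First I would apply the preceding proposition (part (ii)) to write $f$ globally on the connected set $U$ in the form
\[f(z) = \sum_{i \in I_0} f_i(z)\,p_i\]
for some finite $I_0$, where each $f_i$ is complex-valued and holomorphic on $U$, and the $p_i$ are elements of a fixed basis of $\CG$ over $\Comp$ (in particular $\Comp$-linearly independent).

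Next, for any $z \in U$ with $f(z) = 0$, linear independence of the $p_i$ forces $f_i(z) = 0$ for every $i \in I_0$. Hence the zero set of each $f_i$ contains the zero set of $f$, and therefore also has a limit point in $U$. The classical identity theorem then gives $f_i \equiv 0$ on $U$ for each $i \in I_0$, whence $f \equiv 0$ on $U$.

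There is no real obstacle here; the only slightly delicate point is ensuring that the global representation exists on the whole connected set $U$ (not merely locally, where one has no control over which basis elements $p_i$ appear), and this is precisely what part (ii) of the previous proposition supplies.
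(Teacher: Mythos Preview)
Your proof is correct and follows essentially the same route as the paper: reduce to the classical identity theorem by passing to complex-valued coordinate functions. The only minor difference is that the paper invokes part (i) of the preceding proposition (each projection $\pi_i\circ f$ is holomorphic) rather than part (ii); since $f(z)=0$ forces $\pi_i(f(z))=0$ for every $i$, the classical identity theorem kills each $\pi_i\circ f$ directly, without needing the finite global representation you appeal to.
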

\begin{proof}
Let $(p_i)_{i\in I}$ and $(\pi_i)_{i\in I}$ be as in the previous proposition. By the usual identity theorem of complex analysis, each $\pi_i\circ f$ vanishes everywhere; therefore the same is true of $f$.
\end{proof}

Enough has now been proved to discuss analytic continuation of $\CG$-valued functions as required in section \ref{section_local_zeta_integrals}.
%
%
%

\section{Integration on $F\times F$}\label{appendix_product_integration}

In this short section we summarise without proofs the required elements of integration theory for the product space $F\times F$. Proofs of similar results may be found in \cite{Morrow_2}.

Let $\calL^2$ denote the space of Haar integrable functions on $\res{F}\times\res{F}$.

\begin{definition}
Given $g\in\calL^2$, $a_1,a_2\in F$, and $\g_1,\g_2\in\Gamma$, the lift $g$ at $(a_1,a_2),(\g_1,\g_2)$ is the function $f=g^{(a_1,a_2),(\g_1,\g_2)}$ which vanishes off $\Coset{a_1}{\g_1}\times\Coset{a_2}{\g_2}$, and satisfies \[g^{(a_1,a_2),(\g_1,\g_2)}(x_1,x_2)=g(\res{(x_1-a_1)t(-\g_1)},\res{(x_2-a_2)t(-\g_2)})\] if $x_i\in\Coset{a_i}{\g_i},\;i=1,2$. Note that if $g=g_1\otimes g_2$, where $g_i\in\calL$ for $i=1,2$, then $f=g_1^{a_1,\g_1}\otimes g_2^{a_2,\g_2}$.
\end{definition}

Let $\calL(F\times F)$ denote the $\CG$ space of $\CG$-valued functions on $F$ spanned by $g^{a,\g}$ for $g\in\calL^2$, $a\in F\times F$, and $\g\in\Gamma\times\Gamma$. The integral on $\res{F}\times\res{F}$ lifts:

\begin{proposition}
There is a (necessarily unique) $\CG$-linear functional $\int^{F^2}$ on $\calL(F^2)$ which satisfies $\int^{F^2}(g^{(a_1,a_2),(\g_1,\g_2)})=\int g(u,v)\,dudv\,X^{\g_1+\g_2}$ for $g\in\calL^2$, $a_1,a_2\in F$, $\g_1,\g_2\in\Gamma$. $\calL(F\times F)$ is closed under translation and $\int^{F^2}$ is translation-invariant.
\end{proposition}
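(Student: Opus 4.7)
The proposition is the natural product-space analog of the $\calL(F),\int^F$ construction in Section 1, and the plan is to imitate that construction step for step.

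First, I would verify that $\calL^2(J_1\times J_2)$ (the space of lifts of $\calL^2$-functions supported on the product ideal $J_1\times J_2$) and the functional $\int^{J_1\times J_2}:g^{(a_1,a_2),(\g_1,\g_2)}\mapsto\iint g(u,v)\,du\,dv$ are both well-defined, i.e.\ independent of the chosen representatives $(a_1,a_2),(\g_1,\g_2)$. This is the product analog of Remark \ref{remarks_lifted_functions}(i) and of the well-definedness argument for $\int^J$, both following from translation-invariance of Haar measure on $\res{F}\times\res{F}$, applied coordinate-wise.

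Second, and this is the main step, I would prove the product analog of Proposition \ref{prop_linear_independence_of_lifted_functions}: the spaces $\calL^2(J_1\times J_2)$, as $(J_1,J_2)$ varies over pairs of translated fractional ideals, are linearly independent. I would approach this by slicing together with Fubini. Given a dependence $\sum_i f_i=0$ with $f_i=g_i^{(a_1^i,a_2^i),(\g_1^i,\g_2^i)}\in\calL^2(J_1^i\times J_2^i)$ and the pairs $(J_1^i,J_2^i)$ distinct, fix $y\in F$; the slice $x_1\mapsto\sum_i f_i(x_1,y)$ is, for almost every $y$, a finite linear dependence in $\calL(F)_\Comp$, since Fubini guarantees that the one-variable slices $u\mapsto g_i(u,\res{(y-a_2^i)t(-\g_2^i)})$ belong to $\calL$ for almost all $y\in J_2^i$. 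Proposition \ref{prop_linear_independence_of_lifted_functions} then forces the partial sums grouped by first-coordinate ideal $J_1$ to vanish separately, and iterating the slicing with the roles of the coordinates reversed (applied inside each such $J_1$-group) isolates each $f_i$ and forces it to vanish off a null set in $F\times F$. Since each $f_i$ is itself a lift, this forces $g_i=0$ in $\calL^2$ and hence $f_i=0$.

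Third, with linear independence established, I would define $\int^{F^2}:\calL(F\times F)\to\CG$ as the unique $\CG$-linear extension of the $\int^{J_1\times J_2}$'s that inserts the twist $X^{\g_1+\g_2}$ on $\calL^2(J_1\times J_2)$; uniqueness is automatic from linear independence. Translation-invariance is then the product analog of Remark \ref{remarks_lifted_functions}(ii): translating a lift at $(a_1,a_2),(\g_1,\g_2)$ by $(\tau_1,\tau_2)\in F\times F$ yields the lift of the same $g\in\calL^2$ at $(a_1-\tau_1,a_2-\tau_2),(\g_1,\g_2)$, whence $\calL(F\times F)$ is closed under translation and $\int^{F^2}$ is invariant.

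The main obstacle is the linear independence step. The slicing argument is straightforward in spirit but requires careful measure-theoretic bookkeeping, since slices of an $\calL^2$-function only lie in $\calL$ for a.e.\ value of the fixed coordinate, and one must slice in \emph{both} coordinates to separate the two ideal structures simultaneously---this is presumably why the proofs are deferred to \cite{Morrow_2}.
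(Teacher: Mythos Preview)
The paper gives no proof here---the opening line of Appendix \ref{appendix_product_integration} says ``we summarise without proofs'' and defers to \cite{Morrow_2}---so there is nothing in-paper to compare your proposal against, as your final remark already acknowledges.

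Your plan is the natural product-space imitation of Section 1 and is structurally correct. The only real work is the linear-independence step, and your slicing-plus-Fubini argument is the right idea. The bookkeeping you flag is genuine, though: ``for almost every $y\in F$'' has no direct meaning since $F$ itself carries no measure, and the Fubini null sets for the various $g_i$ live in different copies of $\res{F}$ (one for each $J_2^i$, pulled back through the appropriate residue map). One must check that these can be avoided simultaneously when choosing a slice, and that Proposition \ref{prop_linear_independence_of_lifted_functions} applied to the slice really yields vanishing of each $J_1$-group as a function on $F\times F$ rather than merely almost everywhere. This is routine but needs to be said carefully. An alternative that sidesteps Fubini is to run the induction of Proposition \ref{prop_linear_independence_of_lifted_functions} directly on the pair of heights $(\gamma_1,\gamma_2)$, using a two-variable version of Lemma \ref{lemma_lifted_functions_1} and replacing the key input ``$\calL$ contains no nonzero constant'' by its product analog ``$\calL^2$ contains no nonzero function independent of one of the two variables''. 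Either route is adequate.
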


Let $\calL(\mult{F}\times\mult{F})$ be the space of $\CG$-valued functions $\phi$ on $\mult{F}\times\mult{F}$ for which $(x,y)\mapsto \phi(x,y)\abs{x}^{-1}\abs{y}^{-1}$ may be extended to $F\times F$ as a function in $\calL(F\times F)$. Define $\int^{\mult{F}\times\mult{F}}(\phi)=\int^{F^2}\phi(x,y)\abs{x}^{-1}\abs{y}^{-1}$, where the integrand on the right is really the extension of the function to $F\times F$.

\begin{proposition}
If $\phi$ belongs to $\calL(\mult{F}\times\mult{F})$ and $\al_1,\al_2$ are in $\mult{F}$, then $(x,y)\mapsto\phi(\al_1x,\al_2y)$ belongs to $\calL(\mult{F}\times\mult{F})$ and \[\int^{\mult{F}\times\mult{F}}\phi(\al_1 x,\al_2 y)\,\dmult{x}\dmult{y}= \int^{\mult{F}\times\mult{F}}\phi(x,y)\,\dmult{x}\dmult{y}.\]
\end{proposition}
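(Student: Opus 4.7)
The plan is to mirror exactly the single-variable argument given earlier for the corresponding proposition on $\mult{F}$; the essential ingredient will be a two-variable analogue of Lemma \ref{lemma_abs_value}, which I will establish first.

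First I would prove the scaling identity on $F \times F$: for $\al_1, \al_2 \in \mult{F}$ and $f \in \calL(F \times F)$, the function $(x,y) \mapsto f(\al_1 x, \al_2 y)$ lies in $\calL(F \times F)$ and
\[
\int^{F^2} f(\al_1 x, \al_2 y)\,dx\,dy
   = \abs{\al_1}^{-1}\abs{\al_2}^{-1}\int^{F^2} f(x,y)\,dx\,dy.
\]
By $\CG$-linearity it suffices to treat $f = g^{(a_1,a_2),(\g_1,\g_2)}$ for $g \in \calL^2$. Arguing exactly as in the proof of Lemma \ref{lemma_abs_value}, one checks directly that $(x,y) \mapsto f(\al_1 x, \al_2 y)$ is the lift of $(u,v) \mapsto g(\res{\eta(\al_1)}u,\,\res{\eta(\al_2)}v)$ at $(\al_1^{-1} a_1,\,\al_2^{-1} a_2)$, $(\g_1 - \nu(\al_1),\,\g_2 - \nu(\al_2))$. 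Applying the definition of $\int^{F^2}$ and the two-variable version of the scaling identity for the Haar integral on $\res{F} \times \res{F}$ yields the claim, since $\abs{\al_i} = \abs{\eta(\al_i)}X^{\nu(\al_i)}$.

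With this in hand, the proposition follows exactly as in the single-variable case. Let $\phi \in \calL(\mult{F} \times \mult{F})$ and write $(x,y) \mapsto \phi(x,y)\abs{x}^{-1}\abs{y}^{-1}$ as the restriction to $\mult{F} \times \mult{F}$ of some $f \in \calL(F \times F)$. Then
\[
\phi(\al_1 x, \al_2 y)\abs{x}^{-1}\abs{y}^{-1}
 = \abs{\al_1}\abs{\al_2}\,\phi(\al_1 x, \al_2 y)\abs{\al_1 x}^{-1}\abs{\al_2 y}^{-1}
\]
is the restriction to $\mult{F} \times \mult{F}$ of $(x,y) \mapsto \abs{\al_1}\abs{\al_2} f(\al_1 x,\al_2 y)$, which lies in $\calL(F \times F)$ by the scaling identity just proved. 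Hence $(x,y) \mapsto \phi(\al_1 x,\al_2 y)$ lies in $\calL(\mult{F} \times \mult{F})$, and
\[
\int^{\mult{F} \times \mult{F}} \phi(\al_1 x,\al_2 y)\,\dmult{x}\dmult{y}
 = \abs{\al_1}\abs{\al_2}\int^{F^2} f(\al_1 x,\al_2 y)\,dx\,dy
 = \int^{F^2} f(x,y)\,dx\,dy,
\]
which equals $\int^{\mult{F} \times \mult{F}} \phi(x,y)\,\dmult{x}\dmult{y}$ by definition.

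The only substantive step is the scaling identity for $\int^{F^2}$, and this is genuinely just a bookkeeping generalisation of Lemma \ref{lemma_abs_value}: the main thing to verify is that applying a product scaling to a two-variable lift really does yield another two-variable lift, with the expected residue data; there is no new conceptual obstacle, only the mild nuisance that $\calL^2$ is not a priori a tensor product and so one must carry out the calculation for general $g \in \calL^2$ rather than reducing to the product case. Once the lift identity is correctly formulated, translation invariance of the Haar integral on $\res{F} \times \res{F}$ and the rescaling property $\int g(\eta(\al_1)u,\eta(\al_2)v)\,du\,dv = \abs{\eta(\al_1)}^{-1}\abs{\eta(\al_2)}^{-1}\int g(u,v)\,du\,dv$ close the argument.
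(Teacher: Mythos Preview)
Your proposal is correct. Note, however, that the paper does not actually give a proof of this proposition: the appendix explicitly states that the results on $F\times F$ are summarised without proofs, with details deferred to \cite{Morrow_2}. Your argument is exactly the natural two-variable generalisation of the paper's single-variable proof (the proposition following Lemma~\ref{lemma_abs_value}), and is presumably what the cited reference contains.
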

\end{appendix}

\affiliationone{
	Matthew T. Morrow\\
	Maths and Physics Building,\\
	University of Nottingham,\\
	University Park,\\
	Nottingham\\
	NG7 2RD\\
	United Kingdom\\
   	\email{matthew.morrow@maths.nottingham.ac.uk}}
\end{document}